\documentclass[a4paper, 12pt, reqno]{amsart}
\usepackage{amssymb, amsfonts, amsmath, bm}
\usepackage[all]{xy}
\usepackage[shortlabels]{enumitem}
\usepackage{hyperref, color}
\usepackage{commath}
\usepackage{esdiff}
\usepackage{graphicx,subfigure}
\usepackage[center]{caption}
\usepackage{amsthm}
\usepackage{diagbox}
\usepackage{multirow}
\usepackage{esdiff}
\usepackage[pagewise]{lineno}
\usepackage[utf8]{inputenc}
\usepackage{soul}
\usepackage{comment} 
\usepackage{pifont}
\usepackage{todonotes}
\usepackage{hyperref, color, xcolor}
\hypersetup{
	colorlinks = true,
	linkcolor = blue,
	filecolor = blue,
	urlcolor = red,
	citecolor = red
}
\usepackage{cite}
\usepackage{lineno}

\usepackage{dirtytalk}

\usepackage[nameinlink,capitalise]{cleveref} 

\usepackage[a4paper,twoside,top=1in, bottom=1in, left=0.8in, right=0.8in]{geometry}

\DeclareMathOperator{\Ric}{Ric}

\newtheorem{theorem}{Theorem}[]
\newtheorem{lemma}[theorem]{Lemma}
\newtheorem{proposition}[theorem]{Proposition}
\newtheorem{corollary}[theorem]{Corollary}
\newtheorem*{conjecture*}{Conjecture}

\newtheorem{theo}{Theorem}

\newtheorem{cor}[theo]{Corollary}

\theoremstyle{definition}

\newtheorem*{remark*}{Remark}

\newtheorem{remark}[theorem]{Remark}

\numberwithin{figure}{section}

\usepackage{graphicx}
\colorlet{BLUE}{blue} 

\makeatother
\title[Integral Identities and Rigidity of $m$-Quasi-Einstein Manifolds]{Integral Identities and Rigidity of Generalized $m$-Quasi-Einstein Manifolds}
\author[Alcides de Carvalho and W. O. Costa-Filho]{Alcides de Carvalho and W. O. Costa-Filho}

        \address{Universidade Federal de Pernambuco\\
		Departamento de Matematica\\
		 Av. Jorn. Aníbal Fernandes, s/n, Cidade Universitária, Recife - PE, Brazil, CEP:  50740-560.}	
	\email{alcides.junior@ufpe.br}
    
\address{
  Universidade Federal de Alagoas, Campus Arapiraca, Av. Manoel Severino Barbosa, S/N, Bom Sucesso, 
 Arapiraca - AL, Brazil, CEP: 57309-005.}
\email{wagner.filho@arapiraca.ufal.br}

\date{\today}

\begin{document}

\keywords{m-Quasi-Einstein manifolds · Killing vector fields · Scalar curvature · Rigidity · Principal eigenvalue · Maximum principle}

\subjclass{Primary 53C25; Secondary 53C21, 53C65, 58J50}

\begin{abstract}
We prove that every closed $m$-quasi-Einstein manifold
$(M^n,g,X,\lambda)$ with constant $\lambda\leq0$ is trivial whenever
$m\leq-2$. This establishes the Colling--Dunajski conjecture in the range
$m\leq-2$ and, for $n\geq5$, extends the previously known range
$m\leq2-n$. 
This result is placed within a broader study of closed generalized
$m$-quasi-Einstein manifolds $(M^n,g,X,\lambda)$. We establish
differential and integral identities for such manifolds. These identities
yield criteria for conformality, the Killing condition, and triviality,
and provide a unified framework for several rigidity phenomena. After this, we
recast the integrated Bochner formula as a Witten-type Hodge-energy
identity. The cancellation of its quartic term at $m=-2$ leads to a
triviality theorem in the generalized setting under a natural sign
condition on the integral of $\langle X,\nabla\lambda\rangle$.

An identity for the drift Laplacian gives a new proof of the previously known
triviality result for $m\leq-n$ and extends it to generalized
$m$-quasi-Einstein manifolds under a pointwise sign condition on
$\langle X,\nabla\lambda\rangle$. 
Finally, complementing our criterion characterizing when a conformal potential field is Killing, we exhibit in the appendix a closed generalized $m$-quasi-Einstein manifold whose potential field is conformal but non-Killing.

\end{abstract}

\maketitle
\tableofcontents
\section{Introduction}

An $n$-dimensional Riemannian manifold $(M^n,g)$, $n\ge 2,$ is called a \textit{generalized $m$-quasi-Einstein manifold} for a nonzero constant $m \in \mathbb{R}$, if there exist a smooth vector field $X$ on $M$ and a smooth function $\lambda: M \to \mathbb{R}$ satisfying the equation

\begin{equation}\label{GQE}
    \textrm{Ric}+\frac{1}{2} \mathcal{L}_Xg-\frac{1}{m}X^\flat \otimes X^\flat=\lambda g,
\end{equation}
where $\textrm{Ric}$, $\mathcal{L}_Xg$, $X^\flat$ and $\otimes$ stand for the Ricci tensor, the Lie derivative of the metric $g$ in the direction of $X$, the dual 1-form to $X$ associated with the metric $g$ and the tensor product, respectively. We denote a generalized $m$-quasi-Einstein manifold by $(M^n,g,X,\lambda)$. When $\lambda$ is constant, we shall simply refer to $(M^n,g,X,\lambda)$ as an $m$-quasi-Einstein manifold. We observe that, when $X$ vanishes identically, $M^n$ is an Einstein manifold. A generalized $m$-quasi-Einstein manifold will be called \textit{trivial} if the vector field X is identically zero. For comprehensive references on such a subject, we indicate \cite{BarrosGomes, barros2013compact, BarrosRibeiro2014}.

Typically, an $n$-dimensional $m$-quasi-Einstein manifold is precisely the manifold which is the base of an $(n+m)$-dimensional Einstein warped product, in the case $\lambda$ constant, $X$ a gradient vector field, and $m$ is a positive integer (see, for instance, \cite{case2011rigidity}). We point out that when $m$ goes to infinity and $\lambda$ is constant, the above equation reduces to a Ricci soliton, which generates self-similar solutions to the Ricci flow and often arises as singularity models. Also, formally allowing $m=\infty$ and $\lambda$ not constant, we have the notion of an almost Ricci soliton. For more information in this direction, see \cite{barros2012some} and references therein.

Another motivation for studying this equation, when $X$ is not necessarily gradient, is that the limiting procedure of taking extreme black hole spacetimes to near-horizon geometries gives a number of notable examples of $m$-quasi-Einstein manifolds that are also interesting from the viewpoint of geometric analysis and general relativity. In this scope, the reader can consult \cite{lucietti2026intrinsic, colling2026quasi}. In fact, the near-horizon equations in four-dimensional Einstein-Maxwell theory are a special case of the generalized 2-quasi-Einstein equations (see also \cite{kaminski2024extreme, colling2025rigidity}).  Furthermore, for other recent perspectives on this geometric structure, see \cite{cochran2026compact, valiyakath2026nilpotent, de2026generalized}.

We recall that a smooth vector field $Y$ defined on a Riemannian manifold $(M^n,g)$ is said to be \textit{conformal} if $\mathcal{L}_Y g=2\phi\, g$, for a smooth function $\phi$ on $M^n$. In particular, we see that $\phi$ satisfies $\mathrm{div}\,Y=n\phi,$ where $\mathrm{div}$ denotes the divergence of a vector field. In this context, $Y$ will be called \textit{Killing} if its conformal factor $\phi$ vanishes identically. We also say that a conformal vector field is nontrivial if it is not a Killing vector field. Conformal vector fields on a Riemannian manifold are an important tool for studying various analytical and geometric properties, and they have been studied extensively in the literature during the last decades. For more details, see the book \cite{sharma2024conformal}.

In \cite{BarrosGomes} Barros and Gomes proved that a closed $m$-quasi-Einstein manifold is trivial if $(M^n,g)$ is Einstein. Also, it is known (cf. \cite{BarrosGomes}) that if the potential vector field $X$ of a closed $m$-quasi-Einstein manifold is conformal, then it must be Killing.  More recently, Sharma \cite{Rsharma2025} also obtained the latter rigidity result by different methods.
Here we extend these results for closed generalized $m$-quasi-Einstein manifolds that satisfy a certain integral inequality. Namely, 

\begin{theo}\label{1}
Let $(M^n,g,X,\lambda)$, $n\geq2$, be a closed generalized
$m$-quasi-Einstein manifold whose potential field $X$ is conformal. Then
\begin{equation}\label{eq:C1}
\int_M\langle X,\nabla\lambda\rangle\,dM
=-\frac14\int_M|\mathcal L_Xg|^2\,dM
=-\frac1n\int_M(\operatorname{div}X)^2\,dM\leq0.
\end{equation}
Consequently, $X$ is Killing if and only if the first integral in
\eqref{eq:C1} vanishes; it is conformal non-Killing if and only if that
integral is strictly negative. If $n=2$, then necessarily $X=0$.
\end{theo}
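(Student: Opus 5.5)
The plan is to reduce everything to one scalar identity and then use the conformal structure twice. Write $\mathcal L_Xg=2\phi g$ with $\phi=\frac1n\operatorname{div}X$.

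\textbf{The algebraic equalities.} Pointwise $|\mathcal L_Xg|^2=4n\phi^2$ and $(\operatorname{div}X)^2=n^2\phi^2$. Hence $\frac14\int_M|\mathcal L_Xg|^2\,dM=\frac1n\int_M(\operatorname{div}X)^2\,dM=n\int_M\phi^2\,dM$. Integrating by parts, $\int_M\langle X,\nabla\lambda\rangle\,dM=-n\int_M\lambda\phi\,dM$. So \eqref{eq:C1} is equivalent to $\int_M\lambda\phi\,dM=\int_M\phi^2\,dM$.

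The traced equation \eqref{GQE} reads $n\lambda=R+n\phi-\frac1m|X|^2$. Substituting it, \eqref{eq:C1} becomes equivalent to
\[
\int_M\phi R\,dM=\frac1m\int_M\phi|X|^2\,dM .
\]
I will show that both sides vanish when $n\ge3$.

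\textbf{Vanishing of $\int_M\phi R$ for $n\ge3$.} I use the classical formula for the variation of scalar curvature along a conformal field: $X(R)=\mathcal L_XR=-2\phi R-2(n-1)\Delta\phi$. Integrating over $M$ gives $\int_M X(R)=-2\int_M\phi R$. Integrating by parts instead gives $\int_M X(R)=-\int_M R\operatorname{div}X=-n\int_M\phi R$. Comparing the two, $(n-2)\int_M\phi R=0$.

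\textbf{Vanishing of $\int_M\phi|X|^2$ for $n\ge3$.} I take the divergence of \eqref{GQE} and use three facts.
\begin{itemize}
\item The contracted Bianchi identity $\operatorname{div}\mathrm{Ric}=\frac12\nabla R$.
\item The identity $\operatorname{div}(X^\flat\otimes X^\flat)=(\operatorname{div}X)X+\nabla_XX$.
\item For a conformal field, $\frac12\nabla|X|^2=2\phi X-\nabla_XX$.
\end{itemize}
Eliminating $\nabla R$ through the traced equation, I expect to obtain
\[
\tfrac{n-2}{2}\nabla\lambda=\tfrac{n-2}{2}\nabla\phi+\tfrac1m\bigl((n-2)\phi X+2\nabla_XX\bigr).
\]
Now pair with $X$ and integrate, using $\int_M\langle X,\nabla\phi\rangle=-n\int_M\phi^2$ and $\int_M\langle\nabla_XX,X\rangle=\frac12\int_M X(|X|^2)=-\frac n2\int_M\phi|X|^2$. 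This yields
\[
\tfrac{n-2}{2}\int_M\langle X,\nabla\lambda\rangle=-\tfrac{n(n-2)}{2}\int_M\phi^2-\tfrac2m\int_M\phi|X|^2 .
\]
On the other hand, the first step together with $\int_M\phi R=0$ gives $\int_M\langle X,\nabla\lambda\rangle=-n\int_M\phi^2+\frac1m\int_M\phi|X|^2$. Comparing the two expressions gives $\frac{n+2}{2m}\int_M\phi|X|^2=0$, so $\int_M\phi|X|^2=0$. This proves \eqref{eq:C1} for $n\ge3$.

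The sign bookkeeping in this divergence computation is the step I expect to be the main obstacle. If the coefficients do not close up, my fallback is to replace it by the integrated Bochner formula for conformal fields combined with $\int_M\mathrm{Ric}(X,X)$ computed from \eqref{GQE}.

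\textbf{Consequences.} The Killing and non-Killing dichotomy is immediate, since $\int_M\langle X,\nabla\lambda\rangle=-n\int_M\phi^2$.

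\textbf{The case $n=2$.} Here $\mathrm{Ric}=\frac R2g$, and $\frac12\mathcal L_Xg=\phi g$. So \eqref{GQE} forces $X^\flat\otimes X^\flat=\frac{|X|^2}{2}g$. The left side has rank at most one while the right side is a multiple of the rank-two metric, so $|X|^2=0$ and $X=0$. The identity \eqref{eq:C1} then holds trivially.
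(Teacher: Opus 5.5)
Your proof is correct, and it takes a different route from the paper's.

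The paper combines Yano's formula \eqref{eq:bochner} with the integrated Lemma~\ref{lem:laplacianX}. It then cites two facts: $\int_M|X|^2\operatorname{div}X\,dM=0$ for conformal $X$ (Barros--Ribeiro), and the Kazdan--Warner identity $\int_M X(S)\,dM=0$ (Bourguignon--Ezin). It combines these with the trace identity \eqref{eq2}.

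You use \eqref{eq2} alone to reduce \eqref{eq:C1} to $\int_M\phi S\,dM=\frac1m\int_M\phi|X|^2\,dM$ (your $R$ is the paper's $S$). You then prove both vanishing facts yourself. The first comes from the conformal variation formula for $S$, which is the elementary proof of Kazdan--Warner when $n\ge3$. The second comes from the divergence of \eqref{GQE} together with $\frac12\nabla|X|^2=2\phi X-\nabla_XX$.

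I checked the bookkeeping you were worried about. The identity $\frac{n-2}{2}\nabla\lambda=\frac{n-2}{2}\nabla\phi+\frac1m\bigl((n-2)\phi X+2\nabla_XX\bigr)$ is correct. Comparing it with the trace identity gives exactly $\frac{n+2}{2m}\int_M\phi|X|^2\,dM=0$, so no fallback is needed.

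What your version buys: it is self-contained and uses no Bochner formula. It also shows that, once the two vanishing facts are known, \eqref{eq:C1} is just the integrated trace identity. In the paper's argument, the Yano/Lemma~\ref{lem:laplacianX} step, combined with the trace identity, only returns the integral of the pointwise relation $|\mathcal L_Xg|^2=\frac4n(\operatorname{div}X)^2$.

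Your step $(n-2)\int_M\phi S\,dM=0$ does need $n\ge3$. This costs nothing, since for $n=2$ you obtain $X\equiv0$ by the same rank argument the paper uses. What the paper's route buys, in turn, is a link to the Bochner-type identities it reuses later, such as Theorem~\ref{thm:witten-hodge}.
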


The preceding theorem shows that, for conformal potential fields, the vanishing of \eqref{eq:C1} characterizes the Killing case, whereas strict negativity characterizes the non-Killing case. In the appendix, we show that the latter case actually occurs by constructing a closed generalized $m$-quasi-Einstein manifold admitting a conformal non-Killing potential vector field and satisfying
$
\int_M \langle X,\nabla\lambda\rangle\,dM<0.
$

We now turn to the Einstein setting, where a direct argument based solely on integral identities arising naturally from the generalized $m$-quasi-Einstein equation yields the following triviality criterion.

\begin{theo}[Barros--Gomes]\label{2}
Let $(M^n,g,X,\lambda)$, $n\ge 3$, be a closed Einstein generalized $m$-quasi-Einstein manifold.  Then
\begin{equation}\label{eq:C4}
\int_M\left((\operatorname{div}X)^2+
n\langle X,\nabla\lambda\rangle\right)dM
=-\frac{2(n-1)}{m^2(n+2)}\int_M|X|^4\,dM\leq0.
\end{equation}

The equality holds if and only if $X=0$.
\end{theo}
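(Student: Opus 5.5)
The plan is to turn the tensor equation into scalar identities and then eliminate the unknown terms by integrating by parts. Since $g$ is Einstein and $n\ge3$, Schur's lemma gives $\operatorname{Ric}=\frac{R}{n}g$ with $R$ constant. Setting $\mu=\lambda-\frac Rn$, we have $\nabla\mu=\nabla\lambda$, and \eqref{GQE} becomes
\begin{equation*}
\mathcal L_Xg=2\mu\, g+\frac2m X^\flat\otimes X^\flat ,\qquad
\operatorname{div}X=n\mu+\frac1m|X|^2 .
\end{equation*}

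First, integrate by parts the term involving $\lambda$. Since $\int_M\langle X,\nabla\mu\rangle\,dM=-\int_M\mu\operatorname{div}X\,dM$, substituting $n\mu=\operatorname{div}X-\frac1m|X|^2$ gives
\begin{equation*}
\int_M\big((\operatorname{div}X)^2+n\langle X,\nabla\lambda\rangle\big)dM=\frac1m\int_M|X|^2\operatorname{div}X\,dM .
\end{equation*}
Next, integrate by parts once more, using $\langle X,\nabla|X|^2\rangle=(\mathcal L_Xg)(X,X)=2\mu|X|^2+\frac2m|X|^4$. This gives
\begin{equation*}
\frac1m\int_M|X|^2\operatorname{div}X\,dM=-\frac2m\int_M\mu|X|^2\,dM-\frac2{m^2}\int_M|X|^4\,dM .
\end{equation*}
Eliminating $\mu$ by the trace identity then expresses everything in terms of $A=\int_M|X|^2\operatorname{div}X\,dM$ and $B=\int_M|X|^4\,dM$. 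The two relations above close up algebraically and give $A(n+2)=-\frac{2(n-1)}{m}B$. The factor $n+2$ arises from $n+2=n+\frac{2}{1}$ after moving the $A$-term to the left. Multiplying by $\frac1m$ then yields exactly \eqref{eq:C4}, with right-hand side $-\frac{2(n-1)}{m^2(n+2)}B\le 0$.

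The step I expect to be the main obstacle is making sure no other unknown term survives. I will recheck the elimination carefully: the substitution $\mu=\frac1n(\operatorname{div}X-\frac1m|X|^2)$ in $\int_M\mu|X|^2\,dM$ reproduces $A$ and $B$ only. If a residual term appears, for instance one involving $\nabla X$, I will remove it with the Bochner/Yano formula
\begin{equation*}
\int_M\Big(\operatorname{Ric}(X,X)+\tfrac12|\mathcal L_Xg|^2-|\nabla X|^2-(\operatorname{div}X)^2\Big)dM=0 .
\end{equation*}
In that formula $\operatorname{Ric}(X,X)=\frac Rn|X|^2$, and $|\mathcal L_Xg|^2=4n\mu^2+\frac{8}{m}\mu|X|^2+\frac4{m^2}|X|^4$ is computed algebraically from the structure equation.

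Finally, for the equality statement: if the left side of \eqref{eq:C4} vanishes, then $\int_M|X|^4\,dM=0$ because $n\ge3$ gives $n-1>0$. Hence $X\equiv0$. The converse is immediate, since $X=0$ forces both sides to be $0$.
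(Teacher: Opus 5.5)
Your proof is correct. With $A=\int_M|X|^2\operatorname{div}X\,dM$ and $B=\int_M|X|^4\,dM$, the two relations
\[
\int_M\bigl((\operatorname{div}X)^2+n\langle X,\nabla\lambda\rangle\bigr)dM=\frac1mA
\qquad\text{and}\qquad
-A=\frac2n\bigl(A-\frac1mB\bigr)+\frac2mB
\]
combine to $(n+2)A=-\frac{2(n-1)}{m}B$ with no residual term, so you never need the Yano formula fallback.

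Your route differs from the paper's. The paper obtains \eqref{eq:C4} by specializing Corollary~\ref{cor:integral-identities}. That identity holds on every closed generalized $m$-quasi-Einstein manifold and is derived from the Pohozaev--Schoen identity (Lemma~\ref{PS}) combined with the trace-free part of \eqref{GQE}. In the Einstein case the terms $\int_M|\mathring{\Ric}|^2\,dM$ and $\int_M\langle X,\nabla S\rangle\,dM$ simply vanish.

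You instead use the Einstein hypothesis from the start. Then $\mathcal L_Xg$ is known explicitly, and $(\mathcal L_Xg)(X,X)=X(|X|^2)$ can be integrated by parts directly. In effect you are integrating the pointwise identity $\mathring{\Ric}(X,X)=0$, which is the computation inside the proof of Proposition~\ref{prop:traceless-ricci}. This shows that the Pohozaev--Schoen input is not actually needed for this theorem.

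Your argument is shorter and self-contained: it uses only Schur's lemma and the divergence theorem. The paper's route instead produces a general identity that measures the failure of the Einstein condition through $\int_M|\mathring{\Ric}|^2\,dM$ and $\int_M\langle X,\nabla S\rangle\,dM$. Its companion identities in Propositions~\ref{prop:integral-identities} and~\ref{prop:traceless-ricci} are then reused in the proof of Theorem~\ref{3}.
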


In addition, we obtain a new and more direct proof of the
integral formula in \cite[Proposition~2]{BarrosGomes}, based on the known
Pohozaev--Schoen identity. Consequently, our method provides an
alternative derivation of the Barros--Gomes rigidity result.

\begin{remark}
The argument above also recovers the rigidity result of
Barros--Gomes \cite[Remark~2]{BarrosGomes}. More precisely, if
$ 
\int_M\left((\operatorname{div}X)^2
+n\langle X,\nabla\lambda\rangle\right)\,dM\geq 0,
$
then our integral identity immediately yields $X= 0$. Note that the assumption
$
\int_M\langle X,\nabla\lambda\rangle\,dM\geq 0
$ 
implies the preceding condition.
\end{remark}

Moreover, under some assumptions, we establish the following  conformality result. Here, $S$ denotes the scalar curvature of $M$, defined as the trace of $\textrm{Ric}$.

\begin{theo}\label{3}
Let $(M^n,g,X,\lambda)$, $n\ge3,$ be a closed generalized $m$-quasi-Einstein manifold satisfying
$$
\int_M \langle X,\nabla S\rangle\,dM\le0
\quad\text{and}\quad
\frac1m\int_M X(|X|^2)\,dM\le0.
$$

Then $X$ is a conformal vector field. Moreover, $X$ is identically zero
if and only if $(M^n,g)$ is Einstein. Furthermore, if $$\int_M\langle X,\nabla\lambda\rangle\,dM<0,$$
then $X$ is not a Killing vector field.
\end{theo}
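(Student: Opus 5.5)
The plan is to derive one integral identity for the traceless part of $\mathcal L_Xg$ from the Pohozaev--Schoen identity, so that the two hypotheses force it to vanish. The remaining claims then follow from the structure of the equation and from Theorem~\ref{1}.

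\textbf{Step 1: the traceless equation.} Write $T=\operatorname{Ric}-\frac{S}{n}g$ for the traceless Ricci tensor and $(\cdot)^\circ$ for the traceless part of a symmetric $2$-tensor. Taking the traceless part of \eqref{GQE} removes $\lambda$ and gives
$$T=-\tfrac12(\mathcal L_Xg)^\circ+\tfrac1m (X^\flat\otimes X^\flat)^\circ .$$

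\textbf{Step 2: the Pohozaev--Schoen identity.} By the contracted second Bianchi identity, $\operatorname{div}T=\frac{n-2}{2n}\nabla S$. Since $T$ is symmetric, integrating by parts on the closed manifold gives
$$\frac12\int_M\langle T,\mathcal L_Xg\rangle\,dM=-\int_M\langle \operatorname{div}T,X\rangle\,dM=-\frac{n-2}{2n}\int_M\langle X,\nabla S\rangle\,dM .$$

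\textbf{Step 3: evaluating the left-hand side.} Substitute the expression for $T$ from Step~1. We need the pointwise identities
$$\langle X^\flat\otimes X^\flat,\mathcal L_Xg\rangle=2\langle\nabla_XX,X\rangle=X(|X|^2),\qquad \langle (X^\flat\otimes X^\flat)^\circ,\mathcal L_Xg\rangle=X(|X|^2)-\tfrac2n|X|^2\operatorname{div}X .$$
The divergence theorem gives $\int_M|X|^2\operatorname{div}X\,dM=-\int_M X(|X|^2)\,dM$. Combining these, we expect
$$\frac14\int_M|(\mathcal L_Xg)^\circ|^2\,dM=\frac{n-2}{2n}\int_M\langle X,\nabla S\rangle\,dM+\frac{n+2}{2n}\cdot\frac1m\int_M X(|X|^2)\,dM .$$
Since $n\ge3$, both coefficients on the right are positive. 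The hypotheses therefore make the right-hand side $\le0$, so $(\mathcal L_Xg)^\circ\equiv0$, which says exactly that $X$ is conformal. The main obstacle is this bookkeeping: getting the constants right, in particular the trace correction $\frac2n|X|^2\operatorname{div}X$, so that both coefficients come out with the correct sign.

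\textbf{Step 4: the Einstein characterization.} Once $X$ is conformal, Step~1 reduces to $T=\frac1m(X^\flat\otimes X^\flat)^\circ$. If $X=0$, then $T=0$ and $(M^n,g)$ is Einstein. Conversely, suppose $g$ is Einstein, so $T=0$ and hence $(X^\flat\otimes X^\flat)^\circ=0$. Evaluating this tensor on $(X,X)$ gives $|X|^4\bigl(1-\frac1n\bigr)=0$, so $X=0$. (Alternatively, one could invoke Theorem~\ref{2} directly.)

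\textbf{Step 5: the non-Killing criterion.} Since $X$ is conformal, Theorem~\ref{1} applies and gives $\int_M\langle X,\nabla\lambda\rangle\,dM=-\frac1n\int_M(\operatorname{div}X)^2\,dM$. If this integral is strictly negative, then $\operatorname{div}X\not\equiv0$, so the conformal factor $\frac1n\operatorname{div}X$ is not identically zero and $X$ is not Killing.
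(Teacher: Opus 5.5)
Your proof is correct and follows the paper's route: the paper also gets conformality from the identity \eqref{eq:int-traceless-lie}, which it derives exactly as you do from the Pohozaev--Schoen identity and the traceless part of \eqref{GQE}, and it gets the non-Killing claim from \eqref{eq:mainidentity}, i.e.\ from Theorem~\ref{1}. The only deviation is in the Einstein converse: the paper substitutes $\nabla S=0$ and $\int_M|X|^2\operatorname{div}X\,dM=0$ into \eqref{eq:main-integral}, whereas you evaluate $\frac1m(X^\flat\otimes X^\flat)^\circ=0$ pointwise at $(X,X)$, which is shorter, does not need $S$ to be constant, and is the same observation the paper makes in the remark following Theorem~\ref{1}.
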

\begin{remark}
It is worth emphasizing that the integral expression
$$
(n-2)\int_M\langle X,\nabla S\rangle\,dM
+\frac{n+2}{m}\int_M X(|X|^2)\,dM
$$
is always nonnegative, see \eqref{eq:int-traceless-lie}, and it vanishes precisely
when $X$ is conformal. The assumptions of Theorem~\ref{3} force this
a priori nonnegative quantity to be nonpositive and hence to vanish,
which yields the conformality of $X$.
\end{remark}

We observe that  the traceless tensor associated with a tensor $T$ on $(M^n,g)$ is defined by $\mathring{T}=T-\frac{tr(T)}{n}g$, where $\operatorname{tr}(T)$ denotes the trace of $T$ with respect to $g$. In particular, the traceless Ricci tensor $\mathring{\operatorname{Ric}}$ is given by $\mathring{\operatorname{Ric}}=\operatorname{Ric}-\frac{S}{n}g$. With this information, we get the following result.

\begin{cor}\label{4}
There exists no closed generalized $m$-quasi-Einstein manifold
$(M^n,g,X,\lambda)$, $n\ge3$, with constant scalar curvature satisfying
$$
\frac1m\int_M X(|X|^2)\,dM\le0,\qquad
\int_M\langle X,\nabla\lambda\rangle\,dM<0,
$$
and
$$
\int_M
\mathring{\operatorname{Ric}}
(\nabla\operatorname{div}X,\nabla\operatorname{div}X)\,dM
\ge0.
$$
\end{cor}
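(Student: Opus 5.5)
Argue by contradiction. Suppose such a closed generalized $m$-quasi-Einstein manifold exists. Since $S$ is constant, $\int_M\langle X,\nabla S\rangle\,dM=0\le 0$. Together with the hypothesis $\frac1m\int_M X(|X|^2)\,dM\le0$, Theorem~\ref{3} applies and shows that $X$ is conformal. Since $\int_M\langle X,\nabla\lambda\rangle\,dM<0$, Theorem~\ref{1} then shows that $X$ is conformal non-Killing. Write $\mathcal L_Xg=2\phi g$ with $\phi=\frac1n\operatorname{div}X$. By \eqref{eq:C1}, $\int_M(\operatorname{div}X)^2\,dM=-n\int_M\langle X,\nabla\lambda\rangle\,dM>0$, so $\operatorname{div}X$ is not identically zero. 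Because $\int_M\operatorname{div}X\,dM=0$, the function $\phi$ is in fact nonconstant.

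Next I will derive a contradiction from the Ricci hypothesis using the standard integral identity for conformal fields on closed manifolds. For a conformal field with factor $\phi$, commuting derivatives gives
$$\nabla^2\phi(Y,Z)\ \text{related to}\ \mathcal L_X\operatorname{Ric},$$
and more concretely one has the classical formula $(n-1)\Delta\phi=-\tfrac12 X(S)-S\phi$. The latter follows from $\mathcal L_X S=-2\phi S-2(n-1)\Delta\phi$. Since $S$ is constant, this reduces to $\Delta\phi=-\frac{S}{n-1}\phi$. Because $\phi$ is nonconstant with zero mean, $\phi$ is a nontrivial eigenfunction, which forces $S>0$. Moreover, $\lambda_1\le \frac{S}{n-1}$.

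The key step is a Bochner argument applied to $\nabla\phi$. It gives
$$\int_M|\nabla^2\phi|^2\,dM+\int_M\operatorname{Ric}(\nabla\phi,\nabla\phi)\,dM=\int_M(\Delta\phi)^2\,dM.$$
Split $\operatorname{Ric}=\mathring{\operatorname{Ric}}+\frac Sn g$, and use $\int_M(\Delta\phi)^2\,dM=\frac{S}{n-1}\int_M|\nabla\phi|^2\,dM$ together with $|\nabla^2\phi|^2\ge\frac1n(\Delta\phi)^2$. This yields
$$\int_M\mathring{\operatorname{Ric}}(\nabla\phi,\nabla\phi)\,dM\le \Big(\tfrac{S}{n-1}-\tfrac{S}{n(n-1)}-\tfrac Sn\Big)\int_M|\nabla\phi|^2\,dM-\int_M|\mathring{\nabla^2\phi}|^2\,dM=-\int_M\big|\nabla^2\phi+\tfrac{S}{n(n-1)}\phi g\big|^2\,dM,$$
and the coefficient in parentheses is exactly $0$. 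Since $\nabla\operatorname{div}X=n\nabla\phi$, the hypothesis makes the left side $\ge0$. Hence $\nabla^2\phi=-\frac{S}{n(n-1)}\phi g$ with $\phi$ nonconstant, and Obata's theorem then gives that $(M,g)$ is isometric to a round sphere. In particular $M$ is Einstein. By Theorem~\ref{3}, Einstein implies $X\equiv0$, which contradicts $\int_M\langle X,\nabla\lambda\rangle\,dM<0$.

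The main obstacle I expect is establishing the conformal-field identity $\Delta\phi=-\frac{S}{n-1}\phi$, that is, the Lie derivative of scalar curvature along a conformal field, with correct constants. I would derive it from $\mathcal L_X\operatorname{Ric}=-(n-2)\nabla^2\phi-(\Delta\phi)g$ and then trace. A cleaner alternative would avoid Obata entirely: if Theorem~\ref{3}'s proof already gives an identity expressing $\int\mathring{\operatorname{Ric}}(\nabla\operatorname{div}X,\nabla\operatorname{div}X)$ as minus a nonnegative quantity, I would use that identity directly instead.
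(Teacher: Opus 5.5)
Your proposal is correct and follows the paper's proof: Theorem~\ref{3} gives conformality, the strict sign of $\int_M\langle X,\nabla\lambda\rangle\,dM$ gives non-Killing, sphere rigidity gives Einstein, and Theorem~\ref{3} then forces $X\equiv0$, a contradiction. The only difference is in the sphere-rigidity step. The paper cites Yano's characterization theorem there. You instead reprove it from $\Delta\phi=-\frac{S}{n-1}\phi$, the integrated Bochner formula and Obata's theorem; the Bochner formula in fact gives the exact identity $\int_M\mathring{\operatorname{Ric}}(\nabla\phi,\nabla\phi)\,dM=-\int_M|\mathring{\nabla^2\phi}|^2\,dM$, and your constants check out.
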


\begin{remark}
Corollary~\ref{4} also clarifies the statement of Theorem~1.2 of Mirshafeazadeh and Bidabad \cite{ahmad2020rigidity}, where the authors assume the pointwise condition
$$
0<\frac{n}{2}X(|X|^2)\le |X|^2\operatorname{div}X
\quad\text{on }M.
$$
However, the strict inequality is incompatible with the closedness of $M$. Indeed, since $M$ is closed, the smooth function $|X|^2$ attains a maximum at some point $p_0\in M$. Hence,
$
\nabla(|X|^2)(p_0)=0,
$ 
and therefore
$ 
X(|X|^2)(p_0)=0,
$ 
contradicting the assumption that $X(|X|^2)>0$ everywhere. Furthermore, it is important to point out that the sphere rigidity conclusion of Theorem~1.2/Corollary 4.3 in \cite{ahmad2020rigidity} cannot be recovered simply by replacing the original hypothesis with
$ 
\frac{n}{2}X(|X|^2)\le |X|^2\operatorname{div}X,
$
or even by additionally assuming that $X$ is a conformal non-Killing vector
field (see Remark \ref{D} below). 
\end{remark}

Several rigidity criteria for the potential vector field of a closed $m$-quasi-Einstein manifold have appeared in the literature. For example, the equivalence between constant scalar curvature and the Killing condition was established by Ghosh~\cite[Theorem~4.2 and equation~(5.18)]{Gho20}. Under the restriction $m\neq-2$, Bahuaud et al.~\cite{Bahuaud2024} proved that $\operatorname{div}X=0$ implies that $X$ is Killing, while Cochran~\cite{Cochran2025} obtained the equivalence between the Killing condition and constant scalar curvature. Costa--Filho~\cite{CostaFilho2024} subsequently established both criteria for every admissible value of $m$, and Sharma~\cite[Theorem~1.1]{Rsharma2025} later obtained an integral criterion for the Killing condition. These results were extended to the generalized $m$-quasi-Einstein setting in \cite{de2026generalized}. In Section~\ref{sec:integral-identity-phi}, we provide a unified treatment of these results by deriving a single integral identity from which all the aforementioned criteria follow as direct consequences.

In Sections~\ref{sec:Witten} and~\ref{sec:proof-main}, we turn to a recent conjecture of Colling and Dunajski (see ~\cite[p.197]{colling2026quasi}), who expect Theorem~1.3 to remain valid for every $m<0$. More precisely, while their result establishes triviality for closed quasi-Einstein manifolds with $\lambda\leq0$ under the restriction $m\leq 2-n$, the authors hypothesize that the same conclusion should hold throughout the entire negative range. We state this as follows.

\begin{conjecture*}[Colling--Dunajski]
Every closed $m$-quasi-Einstein manifold with $\lambda\leq0$ and $m<0$ is trivial.
\end{conjecture*}

Our main result, Theorem~\ref{thm:main} below, settles this conjecture in the range $m\leq-2$. Before stating it, we record a collection of results obtained in Section~\ref{sec:Witten}, which include the specific values $m=-2$ and $m=-4$ in all dimensions, as well as related triviality results in the generalized setting. By means of several complementary identities and rigidity arguments developed in Section~\ref{sec:Witten}, we obtain a collection of results in this direction, which we summarize in the following theorem.

\begin{theo}\label{thm:intro-triviality}
Let $(M^n,g,X,\lambda)$, $n\geq2$, be a closed generalized
$m$-quasi-Einstein manifold with $\lambda\leq0$. Then $X\equiv0$ in
each of the following cases:
\begin{enumerate}
\item $m\leq-n$, provided
$    \langle X,\nabla\lambda\rangle\leq0
    \,\,\text{on }M
   $
when $n\geq3$;
\item $m=-2$, provided
$$
\int_M\langle X,\nabla\lambda\rangle\,dM\leq0;
$$
\item $m=-4$ and $\lambda$ is constant.
\end{enumerate}
\end{theo}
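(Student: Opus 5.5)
The plan is to derive two basic consequences of \eqref{GQE} and combine them with different weights in the three cases. The first is the trace identity
$$S+\operatorname{div}X-\tfrac1m|X|^2=n\lambda .$$
The second is the divergence of \eqref{GQE}. Using $\operatorname{div}\operatorname{Ric}=\frac12\nabla S$, $\operatorname{div}(\mathcal L_Xg)=\Delta X+\nabla\operatorname{div}X+\operatorname{Ric}(X)$ and $\operatorname{div}(X^\flat\otimes X^\flat)=(\operatorname{div}X)X+\nabla_XX$, it reads
$$\tfrac12\nabla S+\tfrac12\bigl(\Delta X+\nabla\operatorname{div}X+\operatorname{Ric}(X)\bigr)-\tfrac1m\bigl((\operatorname{div}X)X+\nabla_XX\bigr)=\nabla\lambda .$$
Eliminating $S$ through the trace identity expresses $\Delta X$, $\operatorname{Ric}(X)$ and $\nabla\operatorname{div}X$ in terms of $X$, $|X|^2$ and $\nabla\lambda$. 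Evaluating \eqref{GQE} on $(X,X)$ gives the further pointwise relation
$$\operatorname{Ric}(X,X)+\langle\nabla_XX,X\rangle-\tfrac1m|X|^4=\lambda|X|^2 .$$
These are the ingredients for all three items.

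\emph{Case (1), $m\le-n$.} Here I would work pointwise with the drift Laplacian $\Delta_X=\Delta+c\,\nabla_X$, with a constant $c$ depending on $m$ to be chosen. Start from the Bochner formula
$$\tfrac12\Delta|X|^2=|\nabla X|^2+\langle\Delta X,X\rangle .$$
Substitute $\langle\Delta X,X\rangle$ from the contracted divergence identity and $\operatorname{Ric}(X,X)$ from the $(X,X)$ relation. Then estimate $|\nabla X|^2\ge\frac1n(\operatorname{div}X)^2$ and absorb the mixed terms $(\operatorname{div}X)|X|^2$ by completing a square. The target is an inequality of the form
$$\tfrac12\Delta_X|X|^2\ \ge\ -\tfrac{\kappa}{m}|X|^4-\lambda|X|^2-\langle X,\nabla\lambda\rangle,\qquad \kappa=\kappa(m,n)>0 \text{ for } m\le -n.$$
The threshold $m\le-n$ should be exactly where the completed square leaves a nonnegative quartic coefficient. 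Under $\lambda\le0$ and $\langle X,\nabla\lambda\rangle\le0$, the right-hand side is then $\ge0$, with a strictly positive quartic term wherever $X\ne0$. At a maximum point of $|X|^2$ the left-hand side is $\le0$, which forces $|X|^2=0$ there and hence $X\equiv0$. When $n=2$, $\lambda$ should drop out of the relevant term, since the $(n-2)$ factor kills the $\nabla\lambda$ contribution, which explains why the extra hypothesis is only needed for $n\ge3$.

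\emph{Case (2), $m=-2$.} I would integrate the same Bochner identity instead of evaluating it at a maximum, and rewrite it in Witten/Hodge form. Decompose $\nabla X^\flat=\frac12\mathcal L_Xg+\frac12 dX^\flat$, and use the Weitzenböck identities for $\int|\mathcal L_Xg|^2$ and $\int|dX^\flat|^2$ together with $\int(\operatorname{div}X)^2$. After replacing $\operatorname{Ric}(X,X)$ and $\operatorname{div}X$ via the identities above, one should obtain an identity of the shape
$$\int_M\Bigl(\alpha|dX^\flat|^2+\beta\bigl(\operatorname{div}X-\gamma|X|^2\bigr)^2\Bigr)dM=(m+2)\,\delta\int_M|X|^4\,dM+\int_M\bigl(\lambda|X|^2+\varepsilon\langle X,\nabla\lambda\rangle\bigr)\,dM,$$
with $\alpha,\beta,\varepsilon>0$. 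At $m=-2$ the quartic term cancels. The right-hand side is then $\le0$ by $\lambda\le0$ and $\int\langle X,\nabla\lambda\rangle\le0$, while the left-hand side is $\ge0$. Hence both sides vanish, so $\lambda|X|^2\equiv0$, and $X$ is closed with $\operatorname{div}X=\gamma|X|^2$. Integrating the last relation gives $\int|X|^2=0$ if $\gamma\neq0$. If $\gamma=0$, then $X$ is harmonic, hence parallel after using the Ricci identity, and the trace equation then forces $X=0$.

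\emph{Case (3), $m=-4$ with $\lambda$ constant.} The $\nabla\lambda$ terms now disappear. I would take a linear combination of the integrated Witten identity from case (2) with the integrated trace and divergence identities, and in particular with
$$\int_M X(|X|^2)\,dM=-\int_M|X|^2\operatorname{div}X\,dM .$$
The weights are chosen so that the cross term $\int|X|^2\operatorname{div}X$ combines with the $(m+2)$ quartic term into a perfect square with the right sign at the specific value $m=-4$. I expect this algebraic tuning, namely finding a combination in which every remaining term is sign-definite, to be the main obstacle, both here and in pinning down the exact constant $\kappa(m,n)$ in case (1). I would first try the one-parameter family obtained by adding $t\int|X|^2(\operatorname{div}X-\frac1m|X|^2-n\lambda+S)=0$ to the case-(2) identity, and then solve for $t$ so that the discriminant vanishes at $m=-4$.
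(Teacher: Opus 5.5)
Your items (1) and (2) follow the paper's route. In (1), your computation with $c=-1$ is the drift identity \eqref{eq:drift-square-completion}, obtained from Lemma~\ref{lem:laplacianX} and $\operatorname{Ric}(X,X)=\lambda|X|^2+\frac1m|X|^4-\frac12X(|X|^2)$. In (2), your identity is \eqref{eq:witten-hodge}, with quartic coefficient $\frac{(m+2)^2}{4m^2}$, $\varepsilon=n-2\ge0$, and $\gamma=\frac{m-2}{2m}=1$ at $m=-2$, so your $\gamma=0$ branch never arises.

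There is an error at the endpoint of (1). After using $|\nabla X|^2\ge\frac1n(\operatorname{div}X)^2$ and completing the square, the quartic coefficient is $-\frac{m+n}{m^2}$. This is positive only for $m<-n$ and vanishes at $m=-n$. So your claim $\kappa>0$ fails there, and evaluating at a maximum point $p$ of $|X|^2$ does not by itself give $X(p)=0$. For $m<-n$ your weak maximum-principle argument is fine, and simpler than the strong maximum principle the paper invokes. At $m=-n$ you need one more step, and either of the following works:
\begin{enumerate}
\item[(a)] Follow the paper. The strong maximum principle gives $|X|^2$ constant, so every nonnegative term in \eqref{eq:drift-square-completion} vanishes identically. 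In particular $\operatorname{div}X=|X|^2$, and integrating gives $X\equiv0$.
\item[(b)] Use the equality case at $p$. All the squares vanish there, so $\nabla X(p)=\frac{|X|^2(p)}{n}\,\mathrm{Id}$. Then $0=\frac12X(|X|^2)(p)=\frac1n|X|^4(p)$.
\end{enumerate}

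The genuine gap is item (3): the integrated identities you propose to combine do not close the argument at $m=-4$. The term you want to add, $t\int_M|X|^2(\operatorname{div}X-\frac1m|X|^2-n\lambda+S)\,dM$, has an integrand that vanishes pointwise. It only trades $\int|X|^2\operatorname{div}X$ for $\int S|X|^2$, which has no sign, so no choice of $t$ creates positivity. The Witten identity itself at $m=-4$, with $\lambda$ constant, reads
\[
\int_M|dX^\flat|^2\,dM+\int_M\bigl(\operatorname{div}X-|X|^2\bigr)\bigl(\operatorname{div}X-\tfrac12|X|^2\bigr)\,dM=2\lambda\int_M|X|^2\,dM,
\]
and its integrand is indefinite. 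The other integrated identity at hand (Yano's formula, equivalently the integrated divergence equation) becomes $\frac12\int|\mathring{\mathcal L_Xg}|^2=\frac{n-2}{n}\int(\operatorname{div}X)^2+\frac12\int|X|^2\operatorname{div}X$. Adding $s\ge0$ times it requires $(\frac32+\frac s2)^2\le2(1-s\frac{n-2}{n})$, which fails for every $s\ge0$. Taking $s<0$ instead gives $\int|\mathring{\mathcal L_Xg}|^2$ the wrong sign.

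The missing idea is pointwise. From the divergence of \eqref{GQE} the paper derives the codifferential identity of Lemma~\ref{lem:intrinsic-codifferential}. At $m=-4$ this yields the elliptic equation \eqref{eq:m-minus-four-scalar-identity}, namely $\delta_{1/2}d_{1/2}F=\frac18|dX^\flat|^2\ge0$ for $F=\lambda+\frac14(\operatorname{div}X-|X|^2)$. Here $\delta_{1/2}d_{1/2}=-\Delta-\frac12\operatorname{div}X+\frac14|X|^2$ is self-adjoint and nonnegative. When $X^\flat$ is not exact its kernel is trivial, so its principal eigenvalue is positive with an eigenfunction $\varphi>0$. The ground-state transform \eqref{eq:ground-state-transform} and a minimum principle for $F/\varphi$ then give $F\ge0$ pointwise. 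Only then does integration give $0\le\lambda\operatorname{Vol}(M)-\frac14\int_M|X|^2\,dM\le0$. The gradient case, where that eigenvalue is $0$, must be handled separately (Lemma~\ref{lem:gradient}). Your plan contains neither a pointwise second-order equation for $F$ nor a positive eigenfunction with which to run a maximum principle, and these are what make item (3) work.
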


For constant $\lambda$, triviality was already known in the larger range $m\leq2-n$~\cite{colling2026quasi}. Part~(1) extends this rigidity phenomenon to generalized $m$-quasi-Einstein manifolds under a natural sign condition on the variation of $\lambda$ along $X$, suggesting a generalized counterpart of the Colling--Dunajski conjecture in the nonconstant-$\lambda$ setting. Part~(2) provides further evidence for this conjecture in generalized setting at the distinguished value $m=-2$ and, when $\lambda$ is constant, proves the original Colling--Dunajski statement at $m=-2$. Part~(3) proves the original conjecture at $m=-4$. In particular, for $n>7$, this value lies outside the previously known range $m\leq2-n$.

In the $m$-quasi-Einstein setting, parts~(2) and~(3) of
Theorem~\ref{thm:intro-triviality} are special instances of the main
result of this paper, which settles the Colling--Dunajski conjecture on
the whole half-line $(-\infty,-2]$. The theorem is:

\begin{theo}\label{thm:main}
Let $(M^n,g,X,\lambda)$, $n\geq2$, be a closed $m$-quasi-Einstein
manifold with constant $\lambda\leq0$. If $m\leq-2$, then
$X\equiv0$.
\end{theo}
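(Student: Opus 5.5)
The plan is to extend the $m=-2$ argument of Section~\ref{sec:Witten} (part~(2) of Theorem~\ref{thm:intro-triviality}) to the whole range $m\le-2$. For $m=-2$ and constant $\lambda$ that part already gives the result, because $\langle X,\nabla\lambda\rangle\equiv0$. I will produce a one-parameter family of integral identities and show that, for $m\le -2$, one member has every term of a definite sign. This is a plan I expect to work only after the main step below is settled.

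\textbf{Basic identities.} Since $\lambda$ is constant, the trace of \eqref{GQE} gives
$$S+\operatorname{div}X-\tfrac1m|X|^2=n\lambda .$$
Contracting \eqref{GQE} with $X\otimes X$ gives
$$\operatorname{Ric}(X,X)=\lambda|X|^2-\tfrac12X(|X|^2)+\tfrac1m|X|^4 .$$
I also take the divergence of \eqref{GQE} and use the contracted Bianchi identity $\operatorname{div}\operatorname{Ric}=\tfrac12\nabla S$. This yields a pointwise formula expressing $\tfrac12\Delta X+\tfrac12\nabla\operatorname{div}X+\operatorname{Ric}(X)$ through $\tfrac1m(\operatorname{div}X)X$ and $\tfrac1m\nabla_XX$.

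\textbf{Weighted Bochner identity.} Instead of integrating the Bochner--Witten identity for $X^\flat$ directly, I integrate it against the weight $|X|^{2k}$, or equivalently apply it to $|X|^kX$. For $k=0$ this is the Hodge-energy identity of Section~\ref{sec:Witten}; for $k>0$ it produces extra terms $k|X|^{2k-2}\langle \nabla|X|^2,\cdot\rangle$. The integrated identity will have the form
$$\int_M|X|^{2k}\Bigl(\tfrac12|dX^\flat|^2+c_1(\operatorname{div}X)^2\Bigr)+c_2\int_M|X|^{2k-2}\bigl|\nabla|X|^2\bigr|^2=\int_M|X|^{2k}\Bigl(\lambda|X|^2+c_3(m,k)\,|X|^4\Bigr)$$
up to mixed terms. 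I substitute $\operatorname{div}X$ and $\operatorname{Ric}(X,X)$ from the first step and integrate by parts using
$$\int_M X(|X|^{2k+2})=-\int_M|X|^{2k+2}\operatorname{div}X .$$
The quartic coefficient $c_3(m,k)$ then becomes an affine function of $1/m$ whose zero moves with $k$. At $k=0$ it vanishes at $m=-2$, and increasing $k$ pushes the zero toward $-\infty$.

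\textbf{Choosing $k$, and the main obstacle.} For each $m\le-2$ I would choose $k=k(m)\ge0$ so that $c_3(m,k)$ either vanishes or has the good sign. Then the right-hand side is $\le0$, since $\lambda\le0$, while the left-hand side is a sum of nonnegative energies. This would force $dX^\flat=0$ and $\operatorname{div}X=0$ on $\{X\ne0\}$, and $X\equiv0$ would follow as in the $m=-2$ case.

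The step I expect to be hardest is controlling the mixed terms $|X|^{2k-2}\langle\nabla_XX,\nabla|X|^2\rangle$ and $|X|^{2k}\operatorname{div}X\cdot|X|^2$ created by the weight. To keep the left-hand side nonnegative, they must be absorbed by Cauchy--Schwarz into $|\nabla|X|^2|^2$ and $(\operatorname{div}X)^2$. My first attempt is to complete the square in the quantity
$$\nabla_XX+\alpha(\operatorname{div}X)X+\beta\nabla|X|^2,$$
with $\alpha,\beta$ tuned to $m$. The point to check is that the admissible $k$-window is nonempty exactly when $m\le-2$.

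As a fallback, I would use the drift-Laplacian identity from part~(1) of Theorem~\ref{thm:intro-triviality}: apply the maximum principle to $|X|^2$ with drift $X$, and interpolate it with the $m=-2$ identity to cover $-n<m<-2$.
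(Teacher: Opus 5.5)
\textbf{The central step is missing, and it cannot be carried out as you propose.} You leave the existence of an admissible $k=k(m)$ as something ``to check''. Carrying out that check shows it fails.

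Your heuristic about the quartic term is right in isolation. Pair the identity of Lemma~\ref{lem:intrinsic-codifferential} with $|X|^{2k}X^\flat$, and look at configurations with $|X|^2\operatorname{div}X=-(k+1)X(|X|^2)$. On these, the only available integration by parts, $\int_M X(|X|^{2k+2})\,dM=-\int_M|X|^{2k+2}\operatorname{div}X\,dM$, has vanishing integrand, so it cannot help. There the quartic defect is $(k+1)\bigl(\tfrac1{|m|}-\tfrac1{2(k+1)}\bigr)^2$. This generalizes $\tfrac{(m+2)^2}{4m^2}$ in \eqref{eq:witten-hodge} and vanishes exactly at $m=-2(k+1)$, although it never has the good sign.

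The obstruction is the mixed terms. Write $\nabla X=A+\tfrac1n(\operatorname{div}X)\,\mathrm{Id}+\Omega$ with $A$ symmetric trace-free and $\Omega$ skew. The terms $k|X|^{2k-2}\operatorname{div}X\,X(|X|^2)$ and $k|X|^{2k-2}(\iota_XdX^\flat)(\nabla|X|^2)$ are \emph{linear} in $A$, through $\langle AX,X\rangle$ and the component of $AX$ orthogonal to $X$. The weighted Hodge identity contains no $|A|^2$ term at all. So for $k>0$ Cauchy--Schwarz can only absorb these terms if you add a positive multiple of the $|X|^{2k}$-weighted form of Lemma~\ref{lem:laplacianX}. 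That identity supplies $|X|^{2k}|A|^2$, but it has a poor quartic balance.

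Test the combined pointwise quadratic form on $\Omega=0$, $AX\parallel X$, in the direction above. Optimizing over the multipliers of the three available identities (weighted Hodge, weighted Lemma~\ref{lem:laplacianX}, and the integration by parts), a direct computation shows that nonnegativity forces $k+1\ge\tfrac{n-2}{2}$ and then $m<2-n$. Without the Lemma~\ref{lem:laplacianX} contribution, only $k=0$, $m=-2$ survives. So for $n\ge5$ your scheme reaches at most $m<2-n$ together with the isolated value $m=-2$. The interval $(2-n,-2)$, which is the new content of the theorem, is out of reach.

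The fallback does not help either. Both \eqref{eq:drift-square-completion} and \eqref{eq:witten-hodge} hold at the same fixed $m$. For $-n<m<-2$ each carries a quartic term of the wrong sign: $-\tfrac{m+n}{m^2}|X|^4<0$ in the first, and $+\tfrac{(m+2)^2}{4m^2}\int_M|X|^4\,dM$ in the second. There is nothing to interpolate between.

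What is missing is a non-local weight combined with a maximum principle. The paper works with the scalar $F=\lambda+\frac{|X|^2-\operatorname{div}X}{m}$. By \eqref{eq:mixed} it satisfies $\mathcal P_mF=-\frac{m+2}{m^2}|dX^\flat|^2\ge0$ for $m<-2$, where $\mathcal P_m=\delta_{(m+2)/m}d_{-2/m}$ is not self-adjoint. Lemma~\ref{lem:power} shows that its principal eigenvalue is positive when $X^\flat$ is not exact. It does this by pairing a positive eigenfunction $\phi$ with $\phi^{q}$, where $q=-\tfrac{m+2}{2}>0$.

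Conjugating by $\phi$ as in \eqref{eq:conjugation} and applying the minimum principle to $F/\phi$ gives $F\ge0$. Integrating then yields $\lambda\operatorname{Vol}(M)+\frac1m\int_M|X|^2\,dM\ge0$, so $X\equiv0$. The exact case is handled separately by Lemma~\ref{lem:gradient}, and $m=-2$ by Corollary~\ref{cor:m-minus-two}. The weight that makes the argument work is this principal eigenfunction, a global object, not any function of $|X|$.
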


For $n\geq5$, Theorem~\ref{thm:main} extends the range $m\leq2-n$ of
\cite[Theorem~1.3]{colling2026quasi} to $m\leq-2$, thereby covering the
additional interval $(2-n,-2]$. 

The proof of Theorem~\ref{thm:main} was obtained with the assistance of
artificial intelligence. Its key analytical ingredient,
Lemma~\ref{lem:power} below, was suggested by Astra~6.0, an OpenAI model
accessed through ChatGPT: for the operator $P_{a,b}=\delta_ad_b$,
built from the twisted exterior derivative $d_b$ and the twisted
codifferential $\delta_a$ introduced in Section~\ref{sec:twisted-energy},
testing a positive principal eigenfunction $\phi$ against the power
$\phi^{a/b}$ yields the weighted energy identity \eqref{eq:power}. With
$a=(m+2)/m$ and $b=-2/m$, this identity shows that the principal
eigenvalue is strictly positive whenever $m<-2$ and $X^\flat$ is not
exact. Combined with the differential identities of
Lemma~\ref{lem:intrinsic-codifferential} and
Proposition~\ref{prop:mixed-twisted-factorization}, this extends the
spectral and maximum-principle argument used for $m=-4$ in
Theorem~\ref{thm:rigidity-m-minus-four} to every $m<-2$, while the
endpoint $m=-2$ is covered by Corollary~\ref{cor:m-minus-two}. The proof
is given in Section~\ref{sec:proof-main}, and the AI contribution is
detailed in the AI Disclosure at the end of the paper.

\section{Conformal potential fields}

In what follows, all Riemannian manifolds $(M^n,g)$ are assumed to be connected
and oriented. When $M$ is closed, the divergence theorem yields
\begin{equation}\label{divergence}
\int_M\langle X,\nabla f\rangle dM
=-\int_M f\operatorname{div}X dM
\end{equation}
for every $f\in C^\infty(M)$. Since this identity will be used repeatedly throughout the text, we record it here for future reference.
Throughout the manuscript, we will use an important identity due to Barros and Ribeiro in \cite[p.~215]{BarrosRibeiro2012}. Its extension to the generalized gradient setting was obtained in \cite{BarrosRibeiro2014}. For the general case, this result was also proved by Ghosh in \cite[formula 6.5]{ghosh}. Here, $\Delta$ denotes the Laplacian operator on $M^n$.

\begin{lemma}\label{lem:laplacianX}
Let $(M^n,g,X,\lambda)$ be a generalized $m$-quasi-Einstein manifold. Then the following
identity holds
$$
\frac{1}{2}\,\Delta |X|^{2}
= |\nabla X|^{2}
- \mathrm{Ric}(X,X)
+ \frac{2}{m}\,|X|^{2}\,\mathrm{div}\,X
+ (2-n)\,\langle X,\nabla \lambda\rangle.
$$
\end{lemma}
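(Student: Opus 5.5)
We derive the identity from the standard Bochner formula for a vector field and then use \eqref{GQE} to rewrite the two terms involving the divergence of $X$. The Bochner formula, valid for any smooth vector field on a Riemannian manifold, reads
$$
\frac12\Delta|X|^2=|\nabla X|^2+\langle X,\nabla\operatorname{div}X\rangle+\mathrm{Ric}(X,X)+\langle X,\operatorname{div}(\mathcal L_Xg)\rangle-2\langle X,\nabla\operatorname{div}X\rangle .
$$
It is cleaner to work with the equivalent form
$$
\frac12\Delta|X|^2=|\nabla X|^2+\langle \Delta X,X\rangle,
$$
where $\Delta X$ is the rough Laplacian, combined with the commutation identity
$$
\operatorname{div}(\mathcal L_Xg)=\Delta X+\nabla\operatorname{div}X+\mathrm{Ric}(X).
$$
The latter follows from $\nabla^i\nabla_jX_i=\nabla_j\nabla^iX_i+R_{jk}X^k$. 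Together these give
$$
\frac12\Delta|X|^2=|\nabla X|^2+\langle X,\operatorname{div}(\mathcal L_Xg)\rangle-\langle X,\nabla\operatorname{div}X\rangle-\mathrm{Ric}(X,X).
$$

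Next we compute $\operatorname{div}(\mathcal L_Xg)$ and $\operatorname{div}X$ from \eqref{GQE}. Taking the trace of \eqref{GQE} gives
$$
S+\operatorname{div}X-\frac1m|X|^2=n\lambda .
$$
Taking the divergence of \eqref{GQE} and using the contracted Bianchi identity $\operatorname{div}\mathrm{Ric}=\frac12\nabla S$ gives
$$
\frac12\nabla S+\frac12\operatorname{div}(\mathcal L_Xg)-\frac1m\big((\operatorname{div}X)X+\nabla_XX\big)=\nabla\lambda .
$$
Substituting $\nabla S=n\nabla\lambda-\nabla\operatorname{div}X+\frac1m\nabla|X|^2$ from the trace, we obtain
$$
\operatorname{div}(\mathcal L_Xg)=(2-n)\nabla\lambda+\nabla\operatorname{div}X-\frac1m\nabla|X|^2+\frac2m\big((\operatorname{div}X)X+\nabla_XX\big).
$$

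Pairing this with $X$ and using $\langle X,\nabla|X|^2\rangle=2\langle\nabla_XX,X\rangle$ yields
$$
\langle X,\operatorname{div}(\mathcal L_Xg)\rangle=(2-n)\langle X,\nabla\lambda\rangle+\langle X,\nabla\operatorname{div}X\rangle+\frac2m|X|^2\operatorname{div}X .
$$
Inserting this into the formula from the first step, the $\langle X,\nabla\operatorname{div}X\rangle$ terms cancel. What remains is exactly the stated identity.

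The only delicate point is bookkeeping. One must fix the sign conventions: $\Delta=\operatorname{tr}\nabla^2$, and the Ricci commutation with the sign chosen so that $\mathrm{Ric}(X,X)$ enters with a minus sign. One must also check that the two $\frac1m$ contributions $-\frac1m X(|X|^2)$ and $+\frac2m\langle\nabla_XX,X\rangle$ cancel exactly, which they do.
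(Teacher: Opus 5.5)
Your derivation is correct. Combining $\frac12\Delta|X|^2=|\nabla X|^2+\langle\Delta X,X\rangle$ with $\operatorname{div}(\mathcal L_Xg)=\Delta X+\nabla\operatorname{div}X+\mathrm{Ric}(X)$, the trace \eqref{eq2} and the divergence identity \eqref{eq:divergence-section2} gives exactly the stated formula. Both the $\frac1m\langle\nabla_XX,X\rangle$ terms and the $\langle X,\nabla\operatorname{div}X\rangle$ terms cancel as you claim. The paper does not prove this lemma; it cites Barros--Ribeiro and Ghosh. Your route is the standard one and uses the same ingredients the paper itself uses later: the formula for $\operatorname{div}(\mathcal L_Xg)$ in the proof of Lemma~\ref{lem:intrinsic-codifferential}, together with \eqref{eq:divergence-section2} and \eqref{eq2}.

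One correction. Your opening ``Bochner formula'' display is not equivalent to the formula you then use. It simplifies to $|\nabla X|^2+\mathrm{Ric}(X,X)+\langle X,\operatorname{div}(\mathcal L_Xg)\rangle-\langle X,\nabla\operatorname{div}X\rangle$, so it has the wrong sign on $\mathrm{Ric}(X,X)$; it is missing a term $-2\,\mathrm{Ric}(X,X)$. For a Killing field it would give $\frac12\Delta|X|^2=|\nabla X|^2+\mathrm{Ric}(X,X)$ instead of the correct $|\nabla X|^2-\mathrm{Ric}(X,X)$. A rotation field on the round $S^2$ already violates it. Since the rest of your argument never uses that display, delete it and start directly from $\frac12\Delta|X|^2=|\nabla X|^2+\langle\Delta X,X\rangle$. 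Note also that the minus sign on $\mathrm{Ric}(X,X)$ is forced by the standard curvature convention, not a choice.
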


Before proceeding to the proof of Theorem~\ref{1}, we record another identity that will be used repeatedly. Taking the trace of \eqref{GQE}, we obtain
\begin{equation}\label{eq2}
S+\operatorname{div}X=\frac{1}{m}|X|^2+n\lambda.
\end{equation}

As an immediate consequence, we make the following observation: if $M^n$ is closed, integrating \eqref{eq2} and applying the divergence theorem yields the following.
$$
\int_M (S-n\lambda)\,dM=\frac{1}{m}\int_M |X|^2\,dM.
$$

Hence, under the condition $m(S-n\lambda)\leq0$, we conclude that necessarily $X\equiv0$ (see the analogous fact in \cite{BarrosRibeiro2014}, Theorem 2, item 2).

We are now ready to prove our first result.

\begin{proof}[\textbf{Proof of Theorem} \ref{1}]
We begin with the following integral formula for a closed Riemannian manifold (cf. \cite{Yan70})
\begin{equation}\label{eq:bochner}
\frac12\int_M |\mathcal{L}_Xg|^2\,dM
=
\int_M
\bigl(
|\nabla X|^2
-
\operatorname{Ric}(X,X)
\bigr)\,dM
+
\int_M (\operatorname{div}X)^2\,dM.
\end{equation}

Now, integrating the identity of Lemma \ref{lem:laplacianX} on the closed manifold $M$ and applying the divergence theorem, we conclude that,
\begin{equation}\label{eq:key}
\int_M
\bigl(
|\nabla X|^2
-
\operatorname{Ric}(X,X)
\bigr)\,dM
=
(n-2)
\int_M
\langle X,\nabla\lambda\rangle\,dM
-
\frac{2}{m}
\int_M
|X|^2\operatorname{div}X\,dM.
\end{equation}

Assume now that $X$ is conformal. On the one hand, we have $\int_M
|X|^2\operatorname{div}X\,dM=0$ (cf. \cite[Lemma 1, item (2)]{BarrosRibeiro2014}).
Hence, substituting \eqref{eq:key} into \eqref{eq:bochner}, we obtain
\begin{equation}\label{eq:conformal}
\frac12\int_M |\mathcal{L}_Xg|^2\,dM
=
(n-2)
\int_M
\langle X,\nabla\lambda\rangle\,dM
+
\int_M(\operatorname{div}X)^2\,dM.
\end{equation}

On the other hand, we recall the known Kazdan–Warner identity $\int_M \langle X,\nabla S\rangle\,dM=0$ (see  Bourguignon and Ezin \cite[Theorem II.9]{bourguignon1987scalar}). In view of \eqref{eq2}, we have
$$
\int_M
\langle X,\nabla(\operatorname{div}X)\rangle\,dM
=
n
\int_M
\langle X,\nabla\lambda\rangle\,dM
+
\frac1m
\int_M
\langle X,\nabla|X|^2\rangle\,dM.
$$

Applying \eqref{divergence} with $f=\operatorname{div}X$ and $f=|X|^2$, respectively, gives

$$
\int_M(\operatorname{div}X)^2\,dM
=
-\int_M\langle X,\nabla\operatorname{div}X\rangle\,dM,
$$
and
$$
\int_M\langle X,\nabla|X|^2\rangle\,dM
=
-\int_M|X|^2\operatorname{div}X\,dM.
$$

Substituting these identities into \eqref{eq:conformal}, we obtain
\begin{equation}\label{eq:mainidentity}
\frac12\int_M |\mathcal{L}_Xg|^2\,dM
=
-2
\int_M
\langle X,\nabla\lambda\rangle\,dM.
\end{equation}

Since $|\mathcal L_Xg|^2=4(\operatorname{div}X)^2/n$, this is precisely
\eqref{eq:C1}, including both equality characterizations.

If $n=2$, then $\mathring{\operatorname{Ric}}=0$ identically.  Taking the
trace-free part of \eqref{GQE} and using
$\mathring{\mathcal{L}_Xg}=0$ gives
$
 \mathring{(X^\flat\otimes X^\flat)}=0.
$
If $X\neq0$, evaluation on a unit vector orthogonal to $X$
gives $-|X|^2/2=0$, a contradiction. Hence $X\equiv0$.

This concludes the proof of Theorem \ref{1}.
\end{proof}

\begin{remark}
In the gradient setting, Barros and Ribeiro~\cite[Theorem 3, item 2]{BarrosRibeiro2014} proved that every Einstein generalized $m$-quasi-Einstein gradient manifold with conformal potential vector field is trivial. The same conclusion holds immediately in the non-gradient setting. Indeed, if $M$ is Einstein and $X$ is conformal, then taking the traceless part of the generalized $m$-quasi-Einstein equation \eqref{GQE} yields $ -\frac{1}{m}\,\mathring{(X^\flat\otimes X^\flat)}=0,$ which forces $X\equiv0$.    
\end{remark}

\section{Pohozaev--Schoen identities and applications}
Having treated the conformal case, we now derive integral identities
without imposing any a priori condition on the potential field. Our
starting point is the Pohozaev--Schoen identity, which expresses the
average variation of the scalar curvature along $X$ in terms of the
$L^2$-pairing between $\mathring{\operatorname{Ric}}$ and
$\mathcal L_Xg$. Combining this identity with the
trace-free part of the generalized $m$-quasi-Einstein equation yields
the formulas developed in this section.

\subsection{The basic integral identities}

The \textit{Pohozaev--Schoen identity} is a powerful geometric integral formula for analyzing different problems on closed manifolds, with numerous applications in partial differential equations and mathematical physics. We refer the reader to Barbosa, Freitas, and de Lima \cite{barbosa} for further discussion and applications.

\begin{lemma}\label{PS} Let $(M^n,g)$, $n\ge3,$ be a closed Riemannian manifold and $Y$ a smooth vector field on $M^n.$ Then $$\int_M\langle Y,\nabla S\rangle \,dM=-\frac{n}{n-2}\int_M\langle \mathring{\operatorname{Ric}}, \mathcal{L}_Yg\rangle \,dM.$$
\end{lemma}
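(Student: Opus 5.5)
The approach is to combine the contracted second Bianchi identity with integration by parts. The starting point is $\operatorname{div}\operatorname{Ric}=\frac12\nabla S$, which together with $\operatorname{div}\bigl(\frac{S}{n}g\bigr)=\frac1n\nabla S$ gives
\[
\operatorname{div}\mathring{\operatorname{Ric}}=\Bigl(\frac12-\frac1n\Bigr)\nabla S=\frac{n-2}{2n}\nabla S .
\]

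Next, for an arbitrary smooth vector field $Y$, I would consider the one-form $\mathring{\operatorname{Ric}}(Y,\cdot)$, or equivalently the vector field $Z=\mathring{\operatorname{Ric}}(Y)$. Its divergence is
\[
\operatorname{div}Z=(\operatorname{div}\mathring{\operatorname{Ric}})(Y)+\langle \mathring{\operatorname{Ric}},\nabla Y\rangle .
\]
Since $\mathring{\operatorname{Ric}}$ is symmetric, only the symmetric part of $\nabla Y$ contributes, so
\[
\langle \mathring{\operatorname{Ric}},\nabla Y\rangle=\tfrac12\langle \mathring{\operatorname{Ric}},\mathcal L_Yg\rangle .
\]

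Integrating over the closed manifold $M$ and using the divergence theorem, the left-hand side vanishes, which gives
\[
0=\frac{n-2}{2n}\int_M\langle Y,\nabla S\rangle\,dM+\frac12\int_M\langle \mathring{\operatorname{Ric}},\mathcal L_Yg\rangle\,dM .
\]
Because $n\ge3$, the factor $n-2$ is nonzero, and multiplying through by $\frac{2n}{n-2}$ yields exactly the identity stated in Lemma \ref{PS}.

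The only step needing care is the convention for $\langle\cdot,\cdot\rangle$ on $2$-tensors and the factor $\frac12$ relating $\nabla Y$ to $\mathcal L_Yg=2\,\mathrm{Sym}(\nabla Y)$. Once the full contraction is fixed, this is routine. The hypothesis $n\ge 3$ is used only to divide by $n-2$.
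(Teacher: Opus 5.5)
Your proof is correct: the contracted Bianchi identity gives $\operatorname{div}\mathring{\operatorname{Ric}}=\frac{n-2}{2n}\,dS$. Integrating the divergence of $\mathring{\operatorname{Ric}}(Y,\cdot)$, with $\langle\mathring{\operatorname{Ric}},\nabla Y\rangle=\frac12\langle\mathring{\operatorname{Ric}},\mathcal L_Yg\rangle$, then yields the identity with exactly the stated constant. The paper gives no proof of this lemma and simply quotes it as the known Pohozaev--Schoen identity, referring to Barbosa--Freitas--de Lima; your argument is the standard derivation of that identity.
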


 Here, for a tensor $T$ on a Riemannian manifold $(M^n,g)$, we consider $|\mathring{T}|^2=|T|^2-\frac{tr(T)^2}{n}$ the Hilbert-Schmidt norm of $\mathring{T}.$ In this scenario, we have the following result,

\begin{proposition}\label{prop:integral-identities}
Let $(M^n,g,X,\lambda),$ $n\ge3,$ be a closed generalized $m$-quasi-Einstein manifold. Then the following integral identities hold:
\begin{align}
\int_M |\mathring{\Ric}|^2\,dM
&=\frac{n-2}{2n}\int_M \langle X,\nabla S\rangle\,dM+
\frac{1}{m}\int_M \mathring{\Ric}(X,X)\,dM,
\label{eq:int-traceless-ricci}\\
\frac12\int_M |\mathring{\mathcal{L}_Xg}|^2\,dM
&= \frac{n-2}{n}
\int_M\langle X,\nabla S\rangle\,dM
+\frac{n+2}{mn}
\int_M  X(|X|^2)\,dM.
\label{eq:int-traceless-lie}
\end{align}
\end{proposition}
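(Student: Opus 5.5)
The plan is to take the trace-free part of \eqref{GQE}, pair it in turn with $\mathring{\operatorname{Ric}}$ and with $\mathring{\mathcal L_Xg}$, integrate over $M$, and handle the Ricci–Lie cross term with Lemma~\ref{PS} applied to $Y=X$.

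Removing the trace part of \eqref{GQE} (the right-hand side $\lambda g$ is pure trace) gives the pointwise identity
$$
\mathring{\operatorname{Ric}}+\tfrac12\mathring{\mathcal L_Xg}-\tfrac1m\mathring{(X^\flat\otimes X^\flat)}=0 .
$$
Two elementary facts will be used repeatedly. First, if $A$ is trace-free then $\langle A,\mathring T\rangle=\langle A,T\rangle$ for any symmetric $T$. In particular $\langle\mathring{\operatorname{Ric}},\mathring{(X^\flat\otimes X^\flat)}\rangle=\mathring{\operatorname{Ric}}(X,X)$ and $\langle \mathring{\operatorname{Ric}},\mathring{\mathcal L_Xg}\rangle=\langle\mathring{\operatorname{Ric}},\mathcal L_Xg\rangle$. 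Second, Lemma~\ref{PS} with $Y=X$ gives
$$
\int_M\langle\mathring{\operatorname{Ric}},\mathcal L_Xg\rangle\,dM=-\frac{n-2}{n}\int_M\langle X,\nabla S\rangle\,dM .
$$

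\emph{Proof of \eqref{eq:int-traceless-ricci}.} Take the inner product of the trace-free equation with $\mathring{\operatorname{Ric}}$. This yields pointwise
$$
|\mathring{\operatorname{Ric}}|^2=-\tfrac12\langle\mathring{\operatorname{Ric}},\mathcal L_Xg\rangle+\tfrac1m\mathring{\operatorname{Ric}}(X,X).
$$
Integrating and inserting the Pohozaev--Schoen identity turns the first term into $\frac{n-2}{2n}\int_M\langle X,\nabla S\rangle\,dM$, which is exactly \eqref{eq:int-traceless-ricci}.

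\emph{Proof of \eqref{eq:int-traceless-lie}.} Take the inner product of the trace-free equation with $\mathring{\mathcal L_Xg}$ and multiply by $2$. This gives
$$
\tfrac12|\mathring{\mathcal L_Xg}|^2=-\langle\mathring{\operatorname{Ric}},\mathcal L_Xg\rangle+\tfrac1m\langle\mathring{(X^\flat\otimes X^\flat)},\mathcal L_Xg\rangle .
$$
After integration, the Pohozaev--Schoen identity turns the first term into $\frac{n-2}{n}\int_M\langle X,\nabla S\rangle\,dM$.

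For the second term, compute pointwise. We have $(\mathcal L_Xg)(X,X)=2\langle\nabla_XX,X\rangle=X(|X|^2)$ and $\operatorname{tr}\mathcal L_Xg=2\operatorname{div}X$. Hence
$$
\langle\mathring{(X^\flat\otimes X^\flat)},\mathcal L_Xg\rangle=X(|X|^2)-\tfrac2n|X|^2\operatorname{div}X .
$$
By \eqref{divergence} with $f=|X|^2$, $\int_M|X|^2\operatorname{div}X\,dM=-\int_M X(|X|^2)\,dM$. So the integrated second term equals
$$
\frac{1}{m}\Bigl(1+\frac2n\Bigr)\int_M X(|X|^2)\,dM=\frac{n+2}{mn}\int_M X(|X|^2)\,dM,
$$
which gives \eqref{eq:int-traceless-lie}.

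There is no real obstacle here. The only points needing care are the factor conventions: the $\tfrac12$ in front of $\mathcal L_Xg$, the factor $2$ in its trace, and the sign and constant in Lemma~\ref{PS}. The hypothesis $n\ge3$ is needed exactly so that Lemma~\ref{PS} applies.
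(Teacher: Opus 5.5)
Your proposal is correct and is essentially the paper's proof. The paper also takes the trace-free part of \eqref{GQE}, expresses $\langle\mathring{\Ric},\mathcal L_Xg\rangle$ and $\frac12|\mathring{\mathcal L_Xg}|^2$ through it, and then applies Lemma~\ref{PS} with $Y=X$ together with \eqref{divergence} for $f=|X|^2$.

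One harmless slip: pairing the trace-free equation with $\mathring{\mathcal L_Xg}$ already gives your displayed formula for $\frac12|\mathring{\mathcal L_Xg}|^2$, so the extra ``multiply by $2$'' should be dropped.
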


\begin{proof}
Since $M$ is closed, Lemma \ref{PS} yields
\begin{equation}\label{eq:PSclosed}
\int_M \langle X,\nabla S\rangle\,dM
=
-\frac{n}{n-2}
\int_M \langle \mathring{\Ric}, \mathcal{L}_Xg\rangle\,dM.
\end{equation}

Taking the traceless part of \eqref{GQE}, we obtain
$$
\mathring{\Ric}
+\frac12 \mathring{\mathcal L_Xg}
-\frac1m \mathring{(X^\flat\otimes X^\flat)} 
=0,
$$ and therefore
$$ 
\mathring{\mathcal L_Xg}
=
-2\mathring{\Ric}
+\frac{2}{m}\mathring{(X^\flat\otimes X^\flat)}.
$$

Since $\mathring{\Ric}$ is trace-free,
$$
\bigl\langle \mathring{\Ric},
\mathring{(X^\flat\otimes X^\flat)}
\bigr\rangle
=
\langle \mathring{\Ric},
X^\flat\otimes X^\flat
\rangle
=
\mathring{\Ric}(X,X).
$$

Hence,
\begin{equation}\label{eq:key-inner}
\langle \mathring{\Ric}, \mathcal{L}_Xg\rangle
=
-2|\mathring{\Ric}|^2
+\frac{2}{m}\mathring{\Ric}(X,X).
\end{equation}

Substituting \eqref{eq:key-inner} into \eqref{eq:PSclosed}, we obtain
$$
\int_M \langle X,\nabla S\rangle\,dM
=
\frac{2n}{n-2}
\int_M |\mathring{\Ric}|^2\,dM
-\frac{2n}{m(n-2)}
\int_M \mathring{\Ric}(X,X)\,dM,
$$
which proves \eqref{eq:int-traceless-ricci}. On the other hand, by using again that
$$
\mathring{\mathcal L_Xg}
=
-2\mathring{\Ric}
+\frac{2}{m}\mathring{(X^\flat\otimes X^\flat)},
$$
we compute
$$
\frac12|\mathring{\mathcal L_Xg}|^2
=
-\langle\mathring{\Ric},\mathcal{L}_Xg\rangle
+\frac1m
\left(
(\mathcal{L}_Xg)(X,X)
-\frac{2}{n}|X|^2\operatorname{div}X
\right).
$$

Integrating this equality and applying \eqref{eq:PSclosed}, we obtain
$$
\frac12
\int_M |\mathring{\mathcal L_Xg}|^2\,dM
=
\frac{n-2}{n}
\int_M\langle X,\nabla S\rangle\,dM
+\frac1m
\int_M
\left(
X(|X|^2)
-\frac{2}{n}|X|^2\operatorname{div}X
\right)dM.
$$

As before, applying \eqref{divergence} with $f=|X|^2$ gives

$$
\int_M X(|X|^2)\,dM
=
-\int_M |X|^2\operatorname{div}X\,dM.
$$

Hence,
$$
\frac12
\int_M |\mathring{\mathcal L_Xg}|^2\,dM
=
\frac{n-2}{n}
\int_M\langle X,\nabla S\rangle\,dM
+\frac{n+2}{mn}
\int_M  X(|X|^2)\,dM,
$$
which proves \eqref{eq:int-traceless-lie}.
\end{proof}

For the reader's convenience, we include another proof of \eqref{eq:int-traceless-lie}, based on Lemma~\ref{lem:laplacianX}.

\begin{proof}[\textbf{Alternative proof of ~\eqref{eq:int-traceless-lie}}]
Since
$$
\mathring{\mathcal L_Xg}
=
\mathcal L_Xg-\frac{2}{n}(\operatorname{div}X)g,
$$
we have
$$
|\mathring{\mathcal L_Xg}|^2
=
|\mathcal L_Xg|^2
-\frac{4}{n}(\operatorname{div}X)^2.
$$

Hence,
$$
\frac12
\int_M
|\mathring{\mathcal L_Xg}|^2\,dM
=
\frac12
\int_M
|\mathcal L_Xg|^2\,dM
-\frac{2}{n}
\int_M
(\operatorname{div}X)^2\,dM.
$$

We remember the Yano's integral formula (\ref{eq:bochner})
$$
\frac12
\int_M
|\mathcal L_Xg|^2\,dM
=
\int_M
\bigl(|\nabla X|^2-\Ric(X,X)\bigr)\,dM
+
\int_M
(\operatorname{div}X)^2\,dM.
$$

Moreover, Lemma \ref{lem:laplacianX} gives
$$
\int_M
\bigl(|\nabla X|^2-\Ric(X,X)\bigr)\,dM
=
(n-2)\int_M\langle X,\nabla\lambda\rangle\,dM
-\frac{2}{m}
\int_M
|X|^2\operatorname{div}X\,dM.
$$
Therefore,

$$
\frac12
\int_M
|\mathring{\mathcal L_Xg}|^2\,dM
=
(n-2)\int_M\langle X,\nabla\lambda\rangle\,dM
+\frac{n-2}{n}
\int_M
(\operatorname{div}X)^2\,dM
-\frac{2}{m}
\int_M
|X|^2\operatorname{div}X\,dM.
$$

To eliminate the derivative of $\lambda$, we differentiate the trace identity \eqref{eq2} along $X$, obtaining

$$
n\langle X,\nabla\lambda\rangle
=
\langle X,\nabla S\rangle
-\frac{1}{m}X(|X|^2)
+\langle X,\nabla(\operatorname{div}X)\rangle.
$$

Applying \eqref{divergence} to
$f=\operatorname{div}X$ and $f=|X|^2$, and substituting into the preceding identity, we obtain, after cancellation,

$$
\frac12
\int_M
|\mathring{\mathcal L_Xg}|^2\,dM
=
\frac{n-2}{n}
\int_M
\langle X,\nabla S\rangle\,dM
+
\frac{n+2}{mn}
\int_M
X(|X|^2)\,dM,
$$

which proves \eqref{eq:int-traceless-lie}.

\end{proof}
As an immediate consequence, we obtain the next corollary.

\begin{corollary}
Under the assumptions of Proposition \ref{prop:integral-identities}, if the scalar curvature is constant, then
$$
\int_M |\mathring{\Ric}|^2\,dM
=
\frac{1}{m}
\int_M \mathring{\Ric}(X,X)\,dM.
$$

In particular, if $\frac{1}{m}
\mathring{\Ric}(X,X)\leq 0$ on $M$, then $(M^n,g)$ is Einstein.
\end{corollary}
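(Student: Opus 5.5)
The plan is to apply \eqref{eq:int-traceless-ricci} from Proposition~\ref{prop:integral-identities} directly, since constant scalar curvature kills one of its two terms.

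\textbf{Step 1 (the identity).} If $S$ is constant, then $\nabla S\equiv0$. Hence $\langle X,\nabla S\rangle=0$ pointwise, and in particular $\int_M\langle X,\nabla S\rangle\,dM=0$. Substituting this into \eqref{eq:int-traceless-ricci} leaves exactly
$$
\int_M|\mathring{\Ric}|^2\,dM=\frac1m\int_M\mathring{\Ric}(X,X)\,dM,
$$
which is the first assertion. The hypotheses $n\ge3$ and $M$ closed are inherited from Proposition~\ref{prop:integral-identities}, so that Lemma~\ref{PS} applies.

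\textbf{Step 2 (the Einstein conclusion).} Assume now that $\frac1m\mathring{\Ric}(X,X)\le0$ pointwise on $M$.
\begin{itemize}
\item The right-hand side of the identity is then $\le0$.
\item The left-hand side is the integral of a nonnegative smooth function, so it is $\ge0$.
\item Hence $\int_M|\mathring{\Ric}|^2\,dM=0$.
\item By continuity of the integrand, $\mathring{\Ric}\equiv0$, that is, $\Ric=\frac{S}{n}g$.
\end{itemize}
Since $S$ is constant, $(M^n,g)$ is Einstein with constant $S/n$. For $n\ge3$ Schur's lemma would give constancy of $S/n$ anyway.

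\textbf{Expected obstacle.} There is no genuine obstacle here. The only point requiring care is the sign bookkeeping: the pointwise hypothesis $\frac1m\mathring{\Ric}(X,X)\le0$ must be combined with the nonnegativity of $|\mathring{\Ric}|^2$ to force both integrals to vanish.
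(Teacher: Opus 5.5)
Your proposal is correct and is essentially the paper's argument: the paper states this corollary as an immediate consequence of \eqref{eq:int-traceless-ricci} with $\nabla S=0$. The Einstein conclusion then follows from exactly the sign comparison you give, namely $\int_M|\mathring{\Ric}|^2\,dM\geq0$ against a nonpositive right-hand side, which forces $\mathring{\Ric}\equiv0$.
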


Another application of Proposition \ref{prop:integral-identities} is given by the following result.
        
\begin{proposition}\label{prop:traceless-ricci}
Let $(M^n,g,X,\lambda)$ be a closed generalized $m$-quasi-Einstein manifold. Then
\begin{align}
\int_M |\mathring{\Ric}|^2\,dM
&=
\frac{n-2}{2n}\int_M \langle X,\nabla S\rangle\,dM \notag\\
&\quad
+\frac{n+2}{2mn}
\int_M |X|^2\operatorname{div}X\,dM
+\frac{n-1}{m^2n}
\int_M |X|^4\,dM.
\label{eq:main-integral}
\end{align}

\end{proposition}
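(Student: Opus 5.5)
The plan is to start from \eqref{eq:int-traceless-ricci} in Proposition~\ref{prop:integral-identities} and rewrite its last term, $\frac1m\int_M\mathring{\Ric}(X,X)\,dM$, using the equation \eqref{GQE} itself. This reduces \eqref{eq:main-integral} to the identity
$$
\frac1m\int_M\mathring{\Ric}(X,X)\,dM=\frac{n+2}{2mn}\int_M|X|^2\operatorname{div}X\,dM+\frac{n-1}{m^2n}\int_M|X|^4\,dM .
$$
(The dimension restriction $n\ge3$ is implicit, since Lemma~\ref{PS} is used.)

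To obtain this identity, evaluate \eqref{GQE} on $(X,X)$. This gives
$$
\Ric(X,X)=-\tfrac12(\mathcal L_Xg)(X,X)+\tfrac1m|X|^4+\lambda|X|^2 .
$$
Note that $(\mathcal L_Xg)(X,X)=2\langle\nabla_XX,X\rangle=X(|X|^2)$. Next, from the trace identity \eqref{eq2} we have
$$
\frac{S}{n}|X|^2=\frac{1}{mn}|X|^4+\lambda|X|^2-\frac1n|X|^2\operatorname{div}X .
$$
Subtracting the second expression from the first, the $\lambda|X|^2$ terms cancel, and we get the pointwise formula
$$
\mathring{\Ric}(X,X)=-\tfrac12X(|X|^2)+\tfrac1n|X|^2\operatorname{div}X+\tfrac{n-1}{mn}|X|^4 .
$$

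Now integrate over $M$. Apply \eqref{divergence} with $f=|X|^2$, which gives $\int_M X(|X|^2)\,dM=-\int_M|X|^2\operatorname{div}X\,dM$. The divergence terms then combine with coefficient $\frac12+\frac1n=\frac{n+2}{2n}$. Multiplying by $\frac1m$ yields exactly the reduced identity above, and substituting it into \eqref{eq:int-traceless-ricci} proves \eqref{eq:main-integral}.

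There is no real obstacle here. The only points needing care are the exact cancellation of the $\lambda$ terms (which is why non-constant $\lambda$ causes no trouble) and the sign bookkeeping when converting $X(|X|^2)$ into $|X|^2\operatorname{div}X$.
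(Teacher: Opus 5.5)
Your proof is correct and is essentially the paper's argument. The paper also derives the pointwise formula $\mathring{\Ric}(X,X)=-\frac12X(|X|^2)+\frac{1}{n}|X|^2\operatorname{div}X+\frac{n-1}{mn}|X|^4$ from \eqref{GQE}, via its traceless part, which is the same computation as your subtraction of $\frac{S}{n}|X|^2$ using \eqref{eq2}; it then integrates with \eqref{divergence} for $f=|X|^2$ and substitutes into \eqref{eq:int-traceless-ricci}.

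Your remark about $n\ge3$ is apt for this route. The identity nonetheless holds for $n=2$ as well: there $\mathring{\Ric}\equiv0$, the $\langle X,\nabla S\rangle$ coefficient vanishes, and the integrated pointwise formula forces the remaining right-hand side to be zero.
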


\begin{proof}
First, a straightforward calculation shows that
$$\mathring{\Ric}(X,X)
=
-\langle \nabla_XX,X\rangle
+\frac{|X|^2}{n}\operatorname{div}X
+\frac{n-1}{mn}|X|^4.
$$
Indeed, taking the traceless part of the generalized $m$-quasi-Einstein equation yields
$$
\mathring{\Ric}
=
-\frac12( \mathcal{L}_Xg)^\circ
+\frac1m(X^\flat\otimes X^\flat)^\circ.
$$
Evaluating at $(X,X)$ gives
$$
\mathring{\Ric}(X,X)
=
-\langle \nabla_XX,X\rangle
+\frac{|X|^2}{n}\operatorname{div}X
+\frac{n-1}{mn}|X|^4,
$$
which proves the claimed identity.

Since
$$
\langle \nabla_XX,X\rangle
=
\frac12X(|X|^2),
$$
and $M$ is closed, \eqref{divergence} applied once more with $f=|X|^2$, yields

$$
\int_M X(|X|^2)\,dM
=
-\int_M |X|^2\operatorname{div}X\,dM.
$$

Therefore,
$$
\int_M \mathring{\Ric}(X,X)\,dM
=
\frac{n+2}{2n}
\int_M |X|^2\operatorname{div}X\,dM
+\frac{n-1}{mn}
\int_M |X|^4\,dM.
$$

Substituting this identity into \eqref{eq:int-traceless-ricci}
yields \eqref{eq:main-integral}. 
\end{proof}

As a corollary, we can deduce the following integral equality, originally obtained in \cite{BarrosGomes}.

\begin{corollary}\label{cor:integral-identities}
Let $(M^n,g,X,\lambda)$ be a closed generalized $m$-quasi-Einstein manifold. Then
\begin{align*}
\int_M |\mathring{\Ric}|^2\,dM
&=
-\frac{2}{n}\int_M\langle X,\nabla S\rangle\,dM  \\
&\quad
+\frac{n+2}{2n}
\int_M\Big((\operatorname{div}X)^2
+n\langle X,\nabla\lambda\rangle\Big)\,dM
+\frac{n-1}{m^2n}
\int_M|X|^4\,dM.
\end{align*}
\end{corollary}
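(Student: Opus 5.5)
We start from \eqref{eq:main-integral} in Proposition~\ref{prop:traceless-ricci}. It already contains the terms $\int_M\langle X,\nabla S\rangle\,dM$ and $\frac{n-1}{m^2n}\int_M|X|^4\,dM$. The only task is therefore to trade the middle term $\frac{n+2}{2mn}\int_M|X|^2\operatorname{div}X\,dM$ for an expression in $(\operatorname{div}X)^2$, $\langle X,\nabla\lambda\rangle$ and $\langle X,\nabla S\rangle$.

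To do this we use the trace identity \eqref{eq2}, $S+\operatorname{div}X=\frac1m|X|^2+n\lambda$. Multiply it by $\operatorname{div}X$, or equivalently differentiate it along $X$, as in the alternative proof of \eqref{eq:int-traceless-lie}, and integrate. This gives
$$
\frac1m\int_M X(|X|^2)\,dM=\int_M\langle X,\nabla S\rangle\,dM+\int_M\langle X,\nabla\operatorname{div}X\rangle\,dM-n\int_M\langle X,\nabla\lambda\rangle\,dM .
$$
Now apply \eqref{divergence} twice, with $f=|X|^2$ and with $f=\operatorname{div}X$. This converts $\int_M X(|X|^2)\,dM$ into $-\int_M|X|^2\operatorname{div}X\,dM$ and $\int_M\langle X,\nabla\operatorname{div}X\rangle\,dM$ into $-\int_M(\operatorname{div}X)^2\,dM$. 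Hence
$$
\frac1m\int_M|X|^2\operatorname{div}X\,dM=\int_M\Big((\operatorname{div}X)^2+n\langle X,\nabla\lambda\rangle\Big)\,dM-\int_M\langle X,\nabla S\rangle\,dM .
$$

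Substituting this into \eqref{eq:main-integral}, the coefficient $\frac{n+2}{2n}$ multiplies the bracket $\int_M\big((\operatorname{div}X)^2+n\langle X,\nabla\lambda\rangle\big)\,dM$. The $\langle X,\nabla S\rangle$ terms combine as $\frac{n-2}{2n}-\frac{n+2}{2n}=-\frac{2}{n}$, which is exactly the coefficient in the claimed formula. The quartic term is unchanged.

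There is no real obstacle; the only care needed is sign bookkeeping in the two integrations by parts. One should also note that the corollary is stated without the restriction $n\ge3$, while Proposition~\ref{prop:traceless-ricci} relies on Lemma~\ref{PS}. For $n=2$ the coefficient of $\langle X,\nabla S\rangle$ in \eqref{eq:int-traceless-ricci} vanishes, and $\mathring{\Ric}\equiv0$ in dimension two. So I would either restrict to $n\ge3$ as in the preceding results, or check the $n=2$ case directly from the traceless equation.
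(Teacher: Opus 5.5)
Your proposal is correct and is essentially the paper's proof. The paper also eliminates $\frac{n+2}{2mn}\int_M|X|^2\operatorname{div}X\,dM$ from \eqref{eq:main-integral} via the trace identity \eqref{eq2}. The only cosmetic difference is that it multiplies \eqref{eq2} by $\operatorname{div}X$ and integrates by parts with $f=S$ and $f=\lambda$, rather than differentiating along $X$; this yields the same identity and the same coefficient $\frac{n-2}{2n}-\frac{n+2}{2n}=-\frac2n$.

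Your caveat about $n=2$ is sound and easily settled. In that dimension $\mathring{\Ric}\equiv0$, so \eqref{eq:int-traceless-ricci} holds trivially without Lemma~\ref{PS}. The rest of the argument is dimension-free, so the formula in fact holds for all $n\geq2$.
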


\begin{proof}
From the traced generalized $m$-quasi-Einstein equation \eqref{eq2} we obtain
$$
\frac1m|X|^2\operatorname{div}X
=(\operatorname{div}X)^2
+(S-n\lambda)\operatorname{div}X.
$$

Integrating over $M$ and applying \eqref{divergence} with $f=S$ and $f=\lambda$, we obtain
$$
\frac1m\int_M |X|^2\operatorname{div}X\,dM
=
\int_M(\operatorname{div}X)^2\,dM
-\int_M\langle X,\nabla S\rangle\,dM
+n\int_M\langle X,\nabla\lambda\rangle\,dM.
$$

Substituting this identity into Proposition~\ref{prop:traceless-ricci}, we obtain
$$
\begin{aligned}
\int_M |\mathring{\Ric}|^2\,dM
&=
\frac{n-2}{2n}\int_M\langle X,\nabla S\rangle\,dM  \\
&\quad
+\frac{n+2}{2n}
\left(
\int_M(\operatorname{div}X)^2\,dM
-\int_M\langle X,\nabla S\rangle\,dM
+n\int_M\langle X,\nabla\lambda\rangle\,dM
\right) \\
&\quad
+\frac{n-1}{m^2n}\int_M|X|^4\,dM.
\end{aligned}
$$

Thus, the desired equality follows.
\end{proof}

\begin{remark}
Corollary~\ref{cor:integral-identities} provides an alternative derivation of the integral identity established by Barros and Gomes in Proposition~2 of \cite{BarrosGomes}. In contrast to the original proof, which relies on the Hodge--de Rham decomposition and several auxiliary integral identities, our argument follows directly from the Pohozaev--Schoen identity and elementary tensorial computations. Thus, the novelty lies not in the identity itself but in its considerably shorter and more direct proof.
\end{remark}

\subsection{The Einstein case}
We first apply the preceding identities to the Einstein setting.
Since the scalar curvature is then constant, the Corollary~\ref{cor:integral-identities}
reduces to an identity involving only $\operatorname{div}X$,
$\langle X,\nabla\lambda\rangle$, and $|X|^4$. This immediately
yields Theorem~\ref{2}.
\begin{proof}[\textbf{Proof of Theorem~\ref{2}}]
Since $(M^n,g)$ is Einstein and $n\geq 3$, its scalar curvature is
constant. Hence, Corollary~\ref{cor:integral-identities} yields
$$
\frac{n+2}{2n}
\int_M\left((\operatorname{div}X)^2
+n\langle X,\nabla\lambda\rangle\right)\,dM
+\frac{n-1}{m^2n}\int_M |X|^4\,dM=0.
$$

Multiplying this identity by $\frac{2n}{n+2}$, we obtain
$$
\int_M\left((\operatorname{div}X)^2
+n\langle X,\nabla\lambda\rangle\right)\,dM
=
-\frac{2(n-1)}{m^2(n+2)}
\int_M|X|^4\,dM\le0.
$$

This proves \eqref{eq:C4}. Moreover, equality holds if and only if
$
\int_M|X|^4\,dM=0,
$
which is equivalent to $X\equiv0$.
\end{proof}

\subsection{A conformality criterion}
Next, we use the second identity in Proposition~\ref{prop:integral-identities} to detect
conformality. Its left-hand side is the $L^2$-norm of the trace-free
part of $\mathcal L_Xg$. Consequently, suitable sign assumptions on
the right-hand side force the conformal defect of $X$ to vanish.
\begin{proof}[\textbf{Proof of Theorem} \ref{3}]
We first show that $X$ is a conformal vector field. 
By \eqref{eq:int-traceless-lie},
$$
\frac12\int_M|\mathring{\mathcal L_Xg}|^2\,dM
=
\frac{n-2}{n}\int_M\langle X,\nabla S\rangle\,dM
+
\frac{n+2}{mn}\int_MX(|X|^2)\,dM.
$$

On the other hand, the assumptions imply that the right-hand side is nonpositive. Hence,
$ 
\mathring{\mathcal L_Xg}=0,
$
that is, $X$ is conformal.

If $X\equiv0$, then the generalized $m$-quasi-Einstein equation \eqref{GQE} immediately shows that $(M,g)$ is Einstein. Conversely, if $(M,g)$ is Einstein, then $S$ is constant. Therefore, \eqref{eq:int-traceless-lie} gives
$
\int_MX(|X|^2)\,dM=0,
$
and the divergence theorem yields
$
\int_M|X|^2\operatorname{div}X\,dM=0.
$
Substituting these identities into \eqref{eq:main-integral}, we obtain
$$
0=\frac{n-1}{m^2n}\int_M|X|^4\,dM,
$$
which implies $X\equiv0$.

Finally, since $X$ is conformal, identity \eqref{eq:mainidentity} yields
$$
\frac12\int_M|\mathcal{L}_Xg|^2\,dM
=
-2\int_M\langle X,\nabla\lambda\rangle\,dM.
$$

If
$
\int_M\langle X,\nabla\lambda\rangle\,dM<0,
$
then the right-hand side is positive, so $\mathcal{L}_Xg\neq 0$. Therefore, $X$ is not a Killing vector field.
\end{proof}

\begin{remark}\label{rem:comparison_theoremA}
Assume throughout that
$ 
\int_M \langle X,\nabla S\rangle\,dM\le0.
$ 
Under this hypothesis, Theorem~\ref{3} complements Theorem~A due to de Carvalho, Lima, and Costa-Filho in \cite{de2026generalized}. Taken together, these results provide a description of the potential vector field $X$, according to the sign of the integral
$
\int_M \langle X,\nabla\lambda\rangle\,dM.
$
More precisely, we infer that
\begin{enumerate}
    \item[\rm (i)]If
    $
    \int_M \langle X,\nabla\lambda\rangle\,dM\ge0,
    $
    then Theorem~A in \cite{de2026generalized} shows that $X$ is necessarily a Killing vector field.

    \item[\rm (ii)]If
    $$
    \int_M \langle X,\nabla\lambda\rangle\,dM<0
    \quad\text{and}\quad
    \frac1m\int_M X(|X|^2)\,dM\le0,
    $$
    then Theorem~\ref{3} implies that $X$ is a conformal vector field which is not Killing.
\end{enumerate}

Therefore, under the above assumptions, the sign of
$
\int_M \langle X,\nabla\lambda\rangle\,dM
$
completely determines whether the potential vector field is Killing or strictly conformal.

Moreover, in contrast to Theorem~A in \cite{de2026generalized}, our approach based on the integral identities~\eqref{eq:int-traceless-ricci}--~\eqref{eq:int-traceless-lie} yields a complete characterization of the trivial case: a generalized m-quasi-Einstein structure is trivial if and only if the underlying manifold $(M^n,g)$ is Einstein.
\end{remark}

In the sequel, we deduce the last result in this section.

\begin{proof}[\textbf{Proof of Corollary} \ref{4}]
Suppose, for contradiction, that there exists a closed generalized
$m$-quasi-Einstein manifold $(M^n,g,X,\lambda)$ satisfying the hypotheses
of the corollary.

Since the scalar curvature $S$ is constant, we have
$ 
\nabla S=0,
$
and hence,
$
\int_M \langle X,\nabla S\rangle\,dM=0.
$
Therefore, all the assumptions of Theorem~\ref{3} are fulfilled. It follows that $X$ is a conformal vector field.  Furthermore, since $\int_M \langle X, \nabla \lambda \rangle \, dM < 0$, $X$ is a non-Killing vector field (that is, $\operatorname{div} X \not= 0$).

By a classical characterization theorem due to Yano (cf. \cite[p. 56]{Yan70}), any closed Riemannian manifold $(M^n, g)$, $n>2,$ with constant scalar curvature that admits a non-Killing conformal vector field $X$ satisfying $\int_M \mathring{Ric}(\nabla \operatorname{div} X, \nabla \operatorname{div} X) \, dM \ge 0$ is globally isometric to a standard round sphere. In particular, $(M^n,g)$ is an Einstein manifold. Therefore Theorem~\ref{3} yields that $X\equiv0$, a contradiction with the assumption that the generalized $m$-quasi-Einstein structure is nontrivial. Therefore, no such manifold exists.
\end{proof}

\begin{remark}\label{D}
As mentioned previously, the sphere-rigidity conclusion in \cite{ahmad2020rigidity} cannot be recovered by any of these modifications of the
hypotheses. Indeed, the original strict inequality is used to guarantee that $X$ is
non-Killing, whereas the non-strict inequality, together with the assumption
that the scalar curvature is constant, implies that $X$ is conformal. However, every conformal vector field on an Einstein generalized
$m$-quasi-Einstein manifold is necessarily trivial. Hence $X\equiv0$, which
contradicts that $X$ is non-Killing.     
\end{remark}

\section{Characterizations of Killing potential fields}
\label{sec:integral-identity-phi}

Let $(M^n,g,X,\lambda)$ be a closed generalized
$m$-quasi-Einstein manifold. To study some properties of the potential vector
field $X$, we can combine the divergence of equation \eqref{GQE} with its trace
\eqref{eq2}. This procedure naturally singles out the scalar function
$
\Phi=2S-(n+2)\lambda.
$
In the next proposition, the resulting integral identity expresses the variation of $\Phi$ along the
flow of $X$ as a sum of nonnegative terms involving $\mathcal L_Xg$ and
$\operatorname{div}X$. It therefore provides a direct criterion for the
Killing condition, recovers Theorem~A and Proposition~B of
\cite{de2026generalized}, and yields further rigidity consequences in both the
generalized and classical settings.

We begin with the divergence of the generalized $m$-quasi-Einstein equation.
The contracted Bianchi identity and the formula
\begin{equation}\label{div XX}
 \operatorname{div}(X^\flat\otimes X^\flat)
 =(\operatorname{div}X)X^\flat+(\nabla_XX)^\flat,
\end{equation}
give, after taking the divergence of \eqref{GQE},
\begin{equation}\label{eq:divergence-section2}
 \frac12\,\nabla S
 +\frac12\bigl(\operatorname{div}(\mathcal L_Xg)\bigr)^\sharp
 -\frac1m\bigl((\operatorname{div}X)X+\nabla_XX\bigr)
 =\nabla\lambda.
\end{equation}

\begin{remark}\label{rem:killing-gradient-relations}
This argument is pointwise and requires no compactness assumption. Thus, if $X$
is Killing, then \eqref{eq:divergence-section2} and the gradient of the
traced equation give, respectively,
$\nabla S-\frac{2}{m}\nabla_XX=2\nabla\lambda$ and
$\nabla S+\frac{2}{m}\nabla_XX=n\nabla\lambda$, where we used
$\nabla|X|^2=-2\nabla_XX$. Hence
$\nabla S=\frac{n+2}{2}\nabla\lambda$, and so
$\Phi=2S-(n+2)\lambda$ is constant on $M$.
In particular, if $\lambda$ is constant, then so is $S$.
\end{remark}
We are now ready to establish the following result.

\begin{proposition}\label{prop:phi-integral}
Let $(M^n,g,X,\lambda)$ be a closed generalized $m$-quasi-Einstein
manifold. Define the function 
$
\Phi=2S-(n+2)\lambda
$ on $M^n$.
Then
\begin{equation}\label{eq:phi-integral}
 \int_M\langle X,\nabla\Phi\rangle\,dM
 =\frac12\int_M|\mathcal L_Xg|^2\,dM
 +\int_M(\operatorname{div}X)^2\,dM.
\end{equation}
\end{proposition}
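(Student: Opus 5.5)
The plan is to pair the divergence identity \eqref{eq:divergence-section2} with $X$, integrate over $M$, and then use the traced equation \eqref{eq2} to remove the cubic term $|X|^2\operatorname{div}X$. Taking the inner product of \eqref{eq:divergence-section2} with $X$ and integrating gives
\begin{equation*}
\frac12\int_M\langle X,\nabla S\rangle\,dM
+\frac12\int_M(\operatorname{div}\mathcal L_Xg)(X)\,dM
-\frac1m\int_M\Bigl(|X|^2\operatorname{div}X+\tfrac12X(|X|^2)\Bigr)dM
=\int_M\langle X,\nabla\lambda\rangle\,dM .
\end{equation*}
Here I use $\langle\nabla_XX,X\rangle=\frac12X(|X|^2)$.

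\textbf{The Lie-derivative term.} This is the only step requiring care. I integrate by parts: $\int_M(\operatorname{div}\mathcal L_Xg)(X)\,dM=-\int_M\langle\mathcal L_Xg,\nabla X\rangle\,dM$. Because $\mathcal L_Xg$ is symmetric, only the symmetric part of $\nabla X$ contributes, and that part equals $\frac12\mathcal L_Xg$. Hence the term equals $-\frac12\int_M|\mathcal L_Xg|^2\,dM$, and the second summand above becomes $-\frac14\int_M|\mathcal L_Xg|^2\,dM$.

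\textbf{The cubic term.} By \eqref{divergence} with $f=|X|^2$, we have $\int_M X(|X|^2)\,dM=-\int_M|X|^2\operatorname{div}X\,dM$. So the cubic term reduces to $-\frac1{2m}\int_M|X|^2\operatorname{div}X\,dM$. Exactly as in the proof of Corollary~\ref{cor:integral-identities}, multiplying \eqref{eq2} by $\operatorname{div}X$ and applying \eqref{divergence} with $f=S$ and $f=\lambda$ gives
\begin{equation*}
\frac1m\int_M|X|^2\operatorname{div}X\,dM=\int_M(\operatorname{div}X)^2\,dM-\int_M\langle X,\nabla S\rangle\,dM+n\int_M\langle X,\nabla\lambda\rangle\,dM .
\end{equation*}

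\textbf{Collecting terms.} Substituting both pieces yields
\begin{equation*}
\int_M\langle X,\nabla S\rangle\,dM-\frac{n+2}{2}\int_M\langle X,\nabla\lambda\rangle\,dM=\frac14\int_M|\mathcal L_Xg|^2\,dM+\frac12\int_M(\operatorname{div}X)^2\,dM .
\end{equation*}
Multiplying by $2$ gives exactly \eqref{eq:phi-integral} with $\Phi=2S-(n+2)\lambda$.

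I expect no serious obstacle beyond bookkeeping of signs and constants. The places to double-check are the integration by parts in the Lie-derivative term, in particular the factor $\frac12$ from symmetrization, and the sign in \eqref{divergence}.
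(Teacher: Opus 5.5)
Your proposal is correct and follows essentially the same route as the paper: contract \eqref{eq:divergence-section2} with $X$, integrate the Lie-derivative term by parts to get $-\frac12\int_M|\mathcal L_Xg|^2\,dM$, and use \eqref{eq2} to absorb the cubic term $|X|^2\operatorname{div}X$. The only difference is bookkeeping. The paper collects everything into $\int_M\bigl(\lambda-\frac S2-\frac{|X|^2}{2m}\bigr)\operatorname{div}X\,dM$, applies \eqref{eq2} pointwise, and then integrates by parts against $\Phi$. You instead substitute the integrated form of \eqref{eq2} multiplied by $\operatorname{div}X$. Your constants check out.
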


\begin{proof}
Since $\mathcal L_Xg$ is symmetric, integration by parts gives
$$
 \int_M\langle\operatorname{div}(\mathcal L_Xg),X\rangle\,dM
 =-\frac12\int_M|\mathcal L_Xg|^2\,dM.
$$

A further application of \eqref{divergence}, with $f=S$, $f=\lambda$ and $f=|X|^2$ respectively, gives
$$
\int_M\langle\nabla S,X\rangle\,dM
=
-\int_M S\,\operatorname{div}X\,dM,
\qquad
\int_M\langle\nabla\lambda,X\rangle\,dM
=
-\int_M \lambda\,\operatorname{div}X\,dM.
$$
and
$$
 \begin{aligned}
 &\int_M\left(|X|^2\operatorname{div}X
 +\langle\nabla_XX,X\rangle\right)dM \\
 &\qquad
 =\int_M\left(|X|^2\operatorname{div}X
 +\frac12X(|X|^2)\right)dM
 =\frac12\int_M|X|^2\operatorname{div}X\,dM.
 \end{aligned}
$$

Contracting \eqref{eq:divergence-section2} with $X$, integrating, and using
the preceding identities, we find
$$
 \frac14\int_M|\mathcal L_Xg|^2\,dM
 =\int_M\left(\lambda-\frac S2-\frac{|X|^2}{2m}\right)
 \operatorname{div}X\,dM.
$$

By \eqref{eq2},
$$
 \lambda-\frac S2-\frac{|X|^2}{2m}
 =-\frac12\Phi-\frac12\operatorname{div}X.
$$

Consequently,
$$
 \frac12\int_M|\mathcal L_Xg|^2\,dM
 =-\int_M\Phi\operatorname{div}X\,dM
 -\int_M(\operatorname{div}X)^2\,dM.
$$

Finally, applying \eqref{divergence} with $f=\Phi$, we obtain
$$
-\int_M\Phi\,\operatorname{div}X\,dM
=
\int_M\langle X,\nabla\Phi\rangle\,dM,
$$
which proves \eqref{eq:phi-integral}.

\end{proof}

\begin{remark}\label{rem:phi-previous-arxiv}
The identity \eqref{eq:phi-integral} had already been obtained, in equivalent 
form, in the proof of Theorem~A of
\cite{de2026generalized}. More precisely, it is the unnumbered
equation displayed immediately below equation~(6) on page~7 of
\cite{de2026generalized}:
$$
 \frac12\int_M|\mathcal L_Xg|^2\,dM
 =-\int_M\left((\operatorname{div}X)^2
 +(n+2)\langle X,\nabla\lambda\rangle
 -2\langle X,\nabla S\rangle\right)dM.
$$

Rearranging the terms and using
$\nabla\Phi=2\nabla S-(n+2)\nabla\lambda$ gives precisely
\eqref{eq:phi-integral}. Thus, the above Proposition is not a new integral
identity. We include it to emphasize the geometric role of $\Phi$ and the
rigidity consequence needed in the following sections.
\end{remark}

\begin{remark}\label{rem:phi-from-integral-identities}
When $n\geq3$, identity \eqref{eq:phi-integral} can also be deduced from
identity~\eqref{eq:int-traceless-lie}, namely:
\begin{equation}\label{eq:traceless-lie-identity}
 \frac12\int_M| \mathring{\mathcal L_Xg}|^2 \,dM
 =\frac{n-2}{n}\int_M\langle X,\nabla S\rangle\,dM
 +\frac{n+2}{mn}\int_M X(|X|^2)\,dM.
\end{equation}

Indeed, differentiating \eqref{eq2} in the direction of $X$, integrating, and applying
the divergence theorem, we obtain
$$
 \frac1m\int_MX(|X|^2)\,dM
 =\int_M\langle X,\nabla S\rangle\,dM
 -\int_M(\operatorname{div}X)^2\,dM
 -n\int_M\langle X,\nabla\lambda\rangle\,dM.
$$

Furthermore,
$$
 \frac12\int_M| \mathring{\mathcal L_Xg}|^2 \,dM
 =\frac12\int_M|\mathcal L_Xg|^2\,dM
 -\frac2n\int_M(\operatorname{div}X)^2\,dM.
$$

Substituting these two expressions into
\eqref{eq:traceless-lie-identity}, we arrive at
$$
 \frac12\int_M|\mathcal L_Xg|^2\,dM
 =2\int_M\langle X,\nabla S\rangle\,dM
 -(n+2)\int_M\langle X,\nabla\lambda\rangle\,dM
 -\int_M(\operatorname{div}X)^2\,dM,
$$
which is exactly \eqref{eq:phi-integral}.
\end{remark}

Proposition \ref{prop:phi-integral} allows us to deduce the following fact.

\begin{corollary}\label{cor:phi-killing}
Let $(M^n,g,X,\lambda)$ be a closed generalized $m$-quasi-Einstein manifold. If
$$
\int_M \langle X,\nabla\Phi\rangle\,dM\leq 0,
$$
then $X$ is a Killing vector field.
\end{corollary}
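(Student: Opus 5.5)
The plan is to read the conclusion off directly from the identity \eqref{eq:phi-integral} of Proposition~\ref{prop:phi-integral}. That identity expresses $\int_M\langle X,\nabla\Phi\rangle\,dM$ as a sum of two manifestly nonnegative integrals, $\frac12\int_M|\mathcal L_Xg|^2\,dM$ and $\int_M(\operatorname{div}X)^2\,dM$. Under the hypothesis $\int_M\langle X,\nabla\Phi\rangle\,dM\le0$, the left-hand side is nonpositive. Hence
$$
0\le \frac12\int_M|\mathcal L_Xg|^2\,dM+\int_M(\operatorname{div}X)^2\,dM
=\int_M\langle X,\nabla\Phi\rangle\,dM\le0,
$$
so both nonnegative integrals must vanish.

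Since $|\mathcal L_Xg|^2$ is a smooth nonnegative function on the closed manifold $M$, the vanishing of its integral forces $|\mathcal L_Xg|^2\equiv0$ pointwise. Therefore $\mathcal L_Xg=0$, which says exactly that $X$ is Killing. As a byproduct, $\operatorname{div}X\equiv0$ as well, and in fact $\int_M\langle X,\nabla\Phi\rangle\,dM=0$. This is consistent with Remark~\ref{rem:killing-gradient-relations}, which shows that $\Phi$ is constant whenever $X$ is Killing.

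There is essentially no obstacle here, since all the analytic work is contained in Proposition~\ref{prop:phi-integral}. The only point to check is that the proposition needs no dimensional restriction, so the corollary holds for all $n\ge2$. This is indeed the case: its proof uses only the divergence of \eqref{GQE}, the trace identity \eqref{eq2}, and integration by parts, and never invokes the Pohozaev--Schoen identity.
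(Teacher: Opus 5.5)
Your proposal is correct and matches the paper's proof: the hypothesis makes the left-hand side of \eqref{eq:phi-integral} nonpositive while its right-hand side is a sum of nonnegative integrals, so both vanish and $\mathcal L_Xg\equiv0$. Your observation that Proposition~\ref{prop:phi-integral} uses only the divergence and trace of \eqref{GQE} together with integration by parts, and therefore holds for every $n\geq2$, is also accurate.
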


\begin{proof}
By hypothesis, the left-hand side of \eqref{eq:phi-integral} is nonpositive,
while each term on the right-hand side is nonnegative. Hence, both sides
must vanish, and in particular, $X$ is a Killing vector field.
\end{proof}

\begin{remark}
Although the function $\Phi$ was not identified in
\cite{de2026generalized}, the corollary above already follows from the
unnumbered identity recalled in Remark~\ref{rem:phi-previous-arxiv} after
setting $\Phi=2S-(n+2)\lambda$. In that paper, the result was instead
formulated in terms of separate integral inequalities involving $S$ and
$\lambda$.
\end{remark}

Moreover, Theorem~A of \cite{de2026generalized} is an immediate consequence of
Proposition~\ref{prop:phi-integral}.

\begin{corollary}[Theorem~A of \cite{de2026generalized}]
\label{cor:theorem-a-section2}
Let $(M^n,g,X,\lambda)$ be a closed generalized $m$-quasi-Einstein manifold. If
$$
\int_M\langle X,\nabla S\rangle\,dM\leq0
\quad\text{and}\quad
\int_M\langle X,\nabla\lambda\rangle\,dM\geq0,
$$
then $X$ is a Killing vector field.
\end{corollary}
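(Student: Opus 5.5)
The plan is to deduce this directly from Corollary~\ref{cor:phi-killing}, that is, from the identity \eqref{eq:phi-integral} of Proposition~\ref{prop:phi-integral}. Since $\Phi=2S-(n+2)\lambda$, linearity of the gradient and of the integral gives
$$
\int_M\langle X,\nabla\Phi\rangle\,dM
=2\int_M\langle X,\nabla S\rangle\,dM-(n+2)\int_M\langle X,\nabla\lambda\rangle\,dM.
$$

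Under the two hypotheses, the first term on the right is nonpositive. Because $n+2>0$, the second term is also nonpositive. Hence
$$
\int_M\langle X,\nabla\Phi\rangle\,dM\leq0,
$$
and Corollary~\ref{cor:phi-killing} applies and shows that $X$ is Killing.

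Alternatively, one can argue straight from \eqref{eq:phi-integral}. Its right-hand side is a sum of the nonnegative quantities $\frac12\int_M|\mathcal L_Xg|^2\,dM$ and $\int_M(\operatorname{div}X)^2\,dM$. Its left-hand side is nonpositive by the computation above. Both sides therefore vanish, which forces $\mathcal L_Xg\equiv0$, and the vanishing of $\operatorname{div}X$ comes out as a by-product.

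There is no real obstacle here: the only point to check is the sign of the coefficient $-(n+2)$ in front of the $\lambda$-term, which is what makes the hypothesis $\int_M\langle X,\nabla\lambda\rangle\,dM\geq0$ the right one. No dimensional restriction such as $n\geq3$ is needed, because Proposition~\ref{prop:phi-integral} holds for all $n\geq2$ and does not rely on the Pohozaev--Schoen identity.
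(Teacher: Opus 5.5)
Your proposal is correct and matches the paper's proof: you expand $\int_M\langle X,\nabla\Phi\rangle\,dM$ using $\Phi=2S-(n+2)\lambda$, note that it is nonpositive under the two hypotheses, and invoke Corollary~\ref{cor:phi-killing}. Your alternative route simply inlines that corollary's proof from \eqref{eq:phi-integral}, and you are right that no restriction $n\geq3$ is needed.
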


\begin{proof}
The assumptions imply
$$
 \int_M\langle X,\nabla\Phi\rangle\,dM
 =2\int_M\langle X,\nabla S\rangle\,dM
 -(n+2)\int_M\langle X,\nabla\lambda\rangle\,dM\leq0.
$$

The conclusion now follows directly from Corollary~\ref{cor:phi-killing}.
\end{proof}

The same identity \eqref{eq:phi-integral} also directly gives the following corollary.

\begin{corollary}[Proposition~B of \cite{de2026generalized}]
\label{cor:proposition-b-section2}
Let $(M^n,g,X,\lambda)$ be a closed generalized $m$-quasi-Einstein manifold. If $\operatorname{div}X=0$, then $X$ is a Killing vector field.
\end{corollary}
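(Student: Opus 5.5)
This follows directly from Proposition~\ref{prop:phi-integral}, and no other ingredient is needed. The plan is to apply identity \eqref{eq:phi-integral} and then show that its left-hand side vanishes whenever $\operatorname{div}X=0$.

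First, I rewrite the left-hand side by the divergence theorem \eqref{divergence}, applied with $f=\Phi=2S-(n+2)\lambda$:
$$
\int_M\langle X,\nabla\Phi\rangle\,dM=-\int_M\Phi\,\operatorname{div}X\,dM .
$$
Under the hypothesis $\operatorname{div}X\equiv 0$, this integral vanishes identically. For this step we only need $\Phi$ to be smooth, which holds because $S$ and $\lambda$ are smooth.

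Second, I substitute this into \eqref{eq:phi-integral}. Since $\operatorname{div}X=0$, the term $\int_M(\operatorname{div}X)^2\,dM$ also vanishes, and the identity reduces to
$$
\frac12\int_M|\mathcal L_Xg|^2\,dM=0 .
$$
The integrand is nonnegative and continuous, so $\mathcal L_Xg\equiv0$, that is, $X$ is Killing. Equivalently, the hypothesis of Corollary~\ref{cor:phi-killing} holds with equality, and the conclusion follows from that corollary.

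There is no real obstacle here. The only point to check is that the statement holds for every nonzero $m$ (including $m=-2$) and every $n\ge2$, with no sign or constancy assumption on $\lambda$. This is true because Proposition~\ref{prop:phi-integral} was proved without any such restriction: its proof uses only the divergence of \eqref{GQE}, the trace identity \eqref{eq2}, and integration by parts. In particular, it does not use the Pohozaev--Schoen identity, which would require $n\ge3$.
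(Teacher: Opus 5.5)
Your proposal is correct and matches the paper's proof: you apply \eqref{divergence} with $f=\Phi$ to get $\int_M\langle X,\nabla\Phi\rangle\,dM=-\int_M\Phi\operatorname{div}X\,dM=0$, and then conclude $\mathcal L_Xg=0$ from \eqref{eq:phi-integral}. Your side remark is also accurate: Proposition~\ref{prop:phi-integral} needs no restriction on $n$, $m$, or $\lambda$, since it does not use the Pohozaev--Schoen identity.
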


\begin{proof}
Applying \eqref{divergence} with $f=\Phi$, we obtain
$$
\int_M\langle X,\nabla\Phi\rangle\,dM
=
-\int_M\Phi\,\operatorname{div}X\,dM
=
0.
$$

It now follows from \eqref{eq:phi-integral} that $\mathcal L_Xg=0$.
\end{proof}

Next, we recover the following result.

\begin{corollary}[Theorem~A and Theorem~C in \cite{de2026generalized}]
\label{cor:theorem-c-section2}
Let $(M^n,g,X,\lambda)$ be a closed generalized $m$-quasi-Einstein manifold.
Then the following conditions are equivalent:
\begin{enumerate}
    \item[\rm (i)]
    $ 
    \int_M \langle X,\nabla\Phi\rangle\,dM\leq 0;
    $
    \item[\rm (ii)] $X$ is a Killing vector field;
    \item[\rm (iii)] $\operatorname{div}X=0$;
    \item[\rm (iv)]
    $
    \int_M \langle X,\nabla S\rangle\,dM\leq 0
    \quad\text{and}\quad
    \int_M \langle X,\nabla\lambda\rangle\,dM\geq 0.
    $
\end{enumerate}

Whenever these conditions hold,
$$
\nabla_XX=\frac{m(n-2)}{4}\nabla\lambda,
\qquad
\nabla S=\frac{n+2}{2}\nabla\lambda,
\qquad
\nabla|X|^2=-\frac{m(n-2)}{2}\nabla\lambda.
$$

Moreover, $S$ is constant if and only if $\lambda$ is constant.
Additionally, $|X|$ is constant when $n=2$, whereas, for $n\geq3$,
$|X|$ is constant if and only if $\lambda$ is constant.
\end{corollary}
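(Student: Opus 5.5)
The plan is to close the cycle of implications using Proposition~\ref{prop:phi-integral} and its corollaries, and then to read off the gradient relations pointwise from Remark~\ref{rem:killing-gradient-relations}.

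\emph{Equivalences.} I would prove (i)$\Rightarrow$(ii)$\Rightarrow$(iii)$\Rightarrow$(ii), then (iv)$\Rightarrow$(i), and finally (ii)$\Rightarrow$(iv).
\begin{enumerate}
\item[\rm (a)] The implication (i)$\Rightarrow$(ii) is exactly Corollary~\ref{cor:phi-killing}.
\item[\rm (b)] For (ii)$\Rightarrow$(iii), take the trace of $\mathcal L_Xg=0$, which gives $2\operatorname{div}X=0$.
\item[\rm (c)] The implication (iii)$\Rightarrow$(ii) is Corollary~\ref{cor:proposition-b-section2}.
\item[\rm (d)] For (iv)$\Rightarrow$(i), use the linear combination
$$\int_M\langle X,\nabla\Phi\rangle\,dM=2\int_M\langle X,\nabla S\rangle\,dM-(n+2)\int_M\langle X,\nabla\lambda\rangle\,dM\le0,$$
as in Corollary~\ref{cor:theorem-a-section2}.
\item[\rm (e)] For (ii)$\Rightarrow$(iv), we already know (ii) gives $\operatorname{div}X=0$. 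Applying \eqref{divergence} with $f=S$ and with $f=\lambda$ then shows that both integrals in (iv) vanish, so both inequalities hold, with equality.
\end{enumerate}
Together these give the equivalence of (i)--(iv).

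\emph{Gradient identities.} Assume $X$ is Killing. By Remark~\ref{rem:killing-gradient-relations}, which is pointwise and uses $\nabla|X|^2=-2\nabla_XX$, we have the two relations
$$\nabla S-\tfrac{2}{m}\nabla_XX=2\nabla\lambda,\qquad \nabla S+\tfrac{2}{m}\nabla_XX=n\nabla\lambda.$$
The first comes from \eqref{eq:divergence-section2} with $\operatorname{div}(\mathcal L_Xg)=0$ and $\operatorname{div}X=0$. The second is the gradient of \eqref{eq2}. Subtracting the first from the second gives $\frac4m\nabla_XX=(n-2)\nabla\lambda$, that is, $\nabla_XX=\frac{m(n-2)}4\nabla\lambda$. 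Consequently $\nabla|X|^2=-2\nabla_XX=-\frac{m(n-2)}2\nabla\lambda$. Adding the two relations gives $2\nabla S=(n+2)\nabla\lambda$.

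\emph{Constancy statements.} Since $M$ is connected and $n+2\neq0$, the relation $\nabla S=\frac{n+2}2\nabla\lambda$ shows that $S$ is constant if and only if $\lambda$ is constant. When $n=2$ the coefficient $m(n-2)/2$ vanishes, so $\nabla|X|^2=0$ and $|X|$ is constant. When $n\ge3$ that coefficient is nonzero because $m\neq0$, so $|X|$ is constant if and only if $\lambda$ is constant.

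\emph{Main difficulty.} There is no real obstacle. The only care needed is to check that the Killing hypothesis makes $\operatorname{div}(\mathcal L_Xg)$ and $\operatorname{div}X$ vanish in \eqref{eq:divergence-section2}, and to track the signs in $\nabla|X|^2=-2\nabla_XX$.
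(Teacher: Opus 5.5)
Your proposal is correct and follows essentially the paper's proof: the paper closes the single cycle (i)$\Rightarrow$(ii)$\Rightarrow$(iii)$\Rightarrow$(iv)$\Rightarrow$(i) with the same ingredients, so your extra step (iii)$\Rightarrow$(ii) via Corollary~\ref{cor:proposition-b-section2} is harmless but redundant. The one real difference is that you obtain $\nabla_XX=\frac{m(n-2)}{4}\nabla\lambda$ by subtracting the two relations of Remark~\ref{rem:killing-gradient-relations}, whereas the paper cites \cite[Lemma~7]{de2026generalized} for it, so your version is self-contained.
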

\begin{proof}
We prove equivalence through the cyclic chain
$\mathrm{(i)}\Rightarrow\mathrm{(ii)}\Rightarrow
\mathrm{(iii)}\Rightarrow\mathrm{(iv)}\Rightarrow\mathrm{(i)}$.

The implication $\mathrm{(i)}\Rightarrow\mathrm{(ii)}$ is precisely
Corollary~\ref{cor:phi-killing}. If $X$ is Killing, then
$\operatorname{div}X=0$, which proves
$\mathrm{(ii)}\Rightarrow\mathrm{(iii)}$. Assume that $\mathrm{(iii)}$ holds. Since $M$ is closed, applying \eqref{divergence} with $f=S$ and $f=\lambda$, respectively, gives
\begin{align*}
\int_M\langle X,\nabla S\rangle dM
&=
-\int_M S\operatorname{div}X dM
=0,\
\int_M\langle X,\nabla\lambda\rangle dM
=
-\int_M \lambda\operatorname{div}X dM
=0.
\end{align*}

Thus $\mathrm{(iii)}\Rightarrow\mathrm{(iv)}$.

Conversely, if $\mathrm{(iv)}$ holds, then, recalling that
$\Phi=2S-(n+2)\lambda$, we obtain
\begin{align*}
\int_M\langle X,\nabla\Phi\rangle dM
&=
2\int_M\langle X,\nabla S\rangle dM
-(n+2)\int_M\langle X,\nabla\lambda\rangle dM
\leq 0.
\end{align*}

Hence $\mathrm{(iv)}\Rightarrow\mathrm{(i)}$, completing the cyclic
chain and proving the equivalence of the four conditions.

Under any of these equivalent conditions, $X$ is Killing. Therefore,
\cite[Lemma~7, equation~(11)]{de2026generalized} yields
$ 
\nabla_XX=\frac{m(n-2)}{4}\nabla\lambda.
$
Moreover, Remark~\ref{rem:killing-gradient-relations} gives
$
 \nabla S=\frac{n+2}{2}\nabla\lambda.
$
Since $X$ is Killing, we also have
$
 \nabla|X|^2=-2\nabla_XX
 =-\frac{m(n-2)}{2}\nabla\lambda.
$

Consequently, $S$ is constant precisely when $\lambda$ is constant. If $n=2$, the preceding identity immediately shows that $|X|$ is constant. If $n\geq3$, then $m(n-2)/2$ is nonzero, since $m\neq0$, and the same identity shows that $|X|$ is constant precisely when $\lambda$ is constant.

\end{proof}

The preceding results immediately yield the following for the non-generalized
setting.

\begin{corollary}\label{cor:classical-results-section2}
Let $(M^n,g,X,\lambda)$ be a closed $m$-quasi-Einstein manifold. Then the following conditions are
equivalent:
\begin{enumerate}
    \item[\rm (i)] $\int_M\langle X,\nabla S\rangle\,dM\leq0$;
    \item[\rm (ii)] $X$ is Killing;
    \item[\rm (iii)] $\operatorname{div}X=0.$ 
\end{enumerate}

\end{corollary}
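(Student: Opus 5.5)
The plan is to specialize Corollary~\ref{cor:theorem-c-section2} to constant $\lambda$. In that case $\nabla\lambda\equiv0$, so $\nabla\Phi=2\nabla S$. Condition (iv) of that corollary then reduces to $\int_M\langle X,\nabla S\rangle\,dM\le0$, since its second inequality $\int_M\langle X,\nabla\lambda\rangle\,dM\ge0$ holds trivially. Likewise, condition (i) becomes $2\int_M\langle X,\nabla S\rangle\,dM\le0$.

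For the equivalences, I will follow the cyclic chain (i)$\Rightarrow$(ii)$\Rightarrow$(iii)$\Rightarrow$(i).

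\begin{itemize}
\item \textbf{(i)$\Rightarrow$(ii).} If $\int_M\langle X,\nabla S\rangle\,dM\le0$, then $\int_M\langle X,\nabla\Phi\rangle\,dM\le0$. Corollary~\ref{cor:phi-killing} then gives that $X$ is Killing. Alternatively, Proposition~\ref{prop:phi-integral} directly gives
$$
2\int_M\langle X,\nabla S\rangle\,dM=\frac12\int_M|\mathcal L_Xg|^2\,dM+\int_M(\operatorname{div}X)^2\,dM\ge0,
$$
and both terms on the right must vanish.
\item \textbf{(ii)$\Rightarrow$(iii).} This is immediate, since a Killing field is divergence free.
\item \textbf{(iii)$\Rightarrow$(i).} If $\operatorname{div}X=0$, then \eqref{divergence} with $f=S$ gives $\int_M\langle X,\nabla S\rangle\,dM=-\int_M S\operatorname{div}X\,dM=0$, which is in particular $\le0$.
\end{itemize}

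There is essentially no obstacle here. The one point to check is that Proposition~\ref{prop:phi-integral}, and hence Corollary~\ref{cor:theorem-c-section2}, holds for all $n\ge2$ with no restriction on $m$, so the classical statement inherits it with no dimensional hypothesis. That proposition used only the divergence of \eqref{GQE}, the trace \eqref{eq2}, and integration by parts on a closed manifold, so nothing beyond $\lambda$ constant needs to be verified.

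If desired, I will add one remark: by Remark~\ref{rem:killing-gradient-relations}, in the Killing case $S$ is constant. Hence (i) holds with equality, which is consistent with the reduction above.
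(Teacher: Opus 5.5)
Your proposal is correct and follows the paper's route: the paper obtains this corollary by specializing Corollary~\ref{cor:theorem-c-section2} (equivalently, identity \eqref{eq:phi-integral}) to constant $\lambda$, where $\nabla\Phi=2\nabla S$ and the condition on $\int_M\langle X,\nabla\lambda\rangle\,dM$ holds trivially. Your cyclic chain (i)$\Rightarrow$(ii)$\Rightarrow$(iii)$\Rightarrow$(i) simply writes out the details that the paper states as an immediate consequence, and your check that no restriction on $n$ or $m$ is needed is also right.
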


We point out that the present corollary gathers several previous results. Bahuaud et al.~\cite{Bahuaud2024} established, for $m\neq-2$, the equivalence between the Killing and divergence-free conditions, namely ${\rm (ii)}$ and ${\rm (iii)}$. This restriction on $m$ was subsequently removed by Cochran~\cite[Corollary~1.4]{Cochran2025}, who proved the same equivalence for arbitrary admissible $m$. Cochran~\cite{Cochran2025} also showed, for $m\neq-2$, that constant scalar curvature is equivalent to the Killing condition, while Costa--Filho~\cite{CostaFilho2024} obtained the equivalence among constant scalar curvature, the Killing condition, and the divergence-free condition for arbitrary admissible $m$. The equivalence between constant scalar curvature and the Killing condition had already been established by Ghosh~\cite[Theorem~4.2 and equation~(5.18)]{Gho20}. More recently, Sharma~\cite[Theorem~1.1]{Rsharma2025} replaced the constant scalar curvature condition by the weaker integral condition in ${\rm (i)}$, proving that it implies ${\rm (ii)}$, and hence in view of the preceding characterizations, that the scalar curvature is constant.

\section{Differential identities and rigidity for negative  \texorpdfstring{$m$}{m}} \label{sec:Witten}
This section develops the identities and the general
mechanisms underlying our rigidity results for negative $m$. We first
recast the integrated Bochner formula as a Witten-type Hodge-energy
identity for generalized $m$-quasi-Einstein manifolds, which singles out
$m=-2$ through the cancellation of its quartic term, and we derive a
drift-Laplacian identity yielding triviality for $m\leq-n$; both results
allow nonconstant $\lambda$ under sign conditions on
$\langle X,\nabla\lambda\rangle$. We then turn to the constant-$\lambda$
case and establish differential identities for twisted exterior
derivatives and codifferentials, which lead to a mixed twisted operator.
The value $m=-4$ is the one at which this operator is self-adjoint, and
the corresponding rigidity theorem is proved here. The same identities,
combined with a principal-eigenvalue argument, yield
Theorem~\ref{thm:main} in Section~\ref{sec:proof-main}.

\subsection{A Witten-type Hodge-energy identity}
Let $d$ denote the exterior derivative and $\delta$ its formal adjoint. For two-forms, our norm
convention is
\begin{equation}\label{eq:two-form-norm}
|dX^\flat|^2
:=
\frac12 (dX^\flat)_{ij}(dX^\flat)^{ij}.
\end{equation}

With this convention and using the known orthogonality, a straightforward calculation yields
\begin{equation}\label{eq:nablaX-decomposition}
|\nabla X|^2
=
\frac14 |\mathcal L_X g|^2
+
\frac12 |dX^\flat|^2.
\end{equation} 

Since $\delta X^\flat=-\operatorname{div}X$, the integrated Hodge-Weitzenb\"ock formula for $X^\flat$ gives
\begin{equation}\label{eq:wh-hodge-weitzenbock}
\int_M\left(|dX^\flat|^2+(\operatorname{div}X)^2\right)\,dM
=
\int_M\left(|\nabla X|^2+\operatorname{Ric}(X,X)\right)\,dM.
\end{equation}

We point out that \eqref{eq:wh-hodge-weitzenbock} is nothing more than the Yano formula
\eqref{eq:bochner} combined with orthogonal decomposition \eqref{eq:nablaX-decomposition}.

We now combine the differential identity in Lemma~\ref{lem:laplacianX}
with the Hodge-Weitzenb\"ock formula for $X^\flat$, to show that

\begin{theorem}\label{thm:witten-hodge}
Let $(M^n,g,X,\lambda)$ be a closed generalized $m$-quasi-Einstein manifold. Then
\begin{equation}\label{eq:witten-hodge}
\begin{aligned}
\int_M\left[
|dX^\flat|^2+
\left(
\operatorname{div}X-\frac{m-2}{2m}|X|^2
\right)^2
\right]\,dM
&=
\frac{(m+2)^2}{4m^2}\int_M|X|^4\,dM \\
&\quad
+2\int_M\lambda|X|^2\,dM
+(n-2)\int_M\langle X,\nabla\lambda\rangle\,dM.
\end{aligned}
\end{equation}
\end{theorem}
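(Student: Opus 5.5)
The plan is to combine the integrated form of Lemma~\ref{lem:laplacianX} with the Hodge--Weitzenb\"ock identity \eqref{eq:wh-hodge-weitzenbock}, eliminate $\operatorname{Ric}(X,X)$ using \eqref{GQE} itself, and then complete a square in $\operatorname{div}X$.

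\textbf{Step 1: express the left side of \eqref{eq:wh-hodge-weitzenbock} through $\operatorname{Ric}(X,X)$.} I rewrite \eqref{eq:wh-hodge-weitzenbock} as
$$\int_M\bigl(|dX^\flat|^2+(\operatorname{div}X)^2\bigr)dM=\int_M\bigl(|\nabla X|^2-\operatorname{Ric}(X,X)\bigr)dM+2\int_M\operatorname{Ric}(X,X)\,dM.$$
The first integral on the right is given by \eqref{eq:key}, the integrated form of Lemma~\ref{lem:laplacianX}. It equals $(n-2)\int_M\langle X,\nabla\lambda\rangle\,dM-\frac2m\int_M|X|^2\operatorname{div}X\,dM$.

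\textbf{Step 2: evaluate $\operatorname{Ric}(X,X)$ pointwise.} I evaluate \eqref{GQE} on $(X,X)$. Since $(\mathcal L_Xg)(X,X)=2\langle\nabla_XX,X\rangle=X(|X|^2)$, this gives
$$\operatorname{Ric}(X,X)=\lambda|X|^2-\tfrac12X(|X|^2)+\tfrac1m|X|^4.$$
Integrating and using \eqref{divergence} with $f=|X|^2$ yields
$$2\int_M\operatorname{Ric}(X,X)\,dM=2\int_M\lambda|X|^2\,dM+\int_M|X|^2\operatorname{div}X\,dM+\frac2m\int_M|X|^4\,dM.$$

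\textbf{Step 3: combine and complete the square.} Putting Steps 1 and 2 together gives
$$\int_M\bigl(|dX^\flat|^2+(\operatorname{div}X)^2\bigr)dM=2\int_M\lambda|X|^2\,dM+(n-2)\int_M\langle X,\nabla\lambda\rangle\,dM+\Bigl(1-\frac2m\Bigr)\int_M|X|^2\operatorname{div}X\,dM+\frac2m\int_M|X|^4\,dM.$$
Set $c=\frac{m-2}{2m}$, so that $2c=1-\frac2m$. Move the mixed term to the left and add $c^2\int_M|X|^4\,dM$ to both sides. The left side then becomes $\int_M\bigl[|dX^\flat|^2+(\operatorname{div}X-c|X|^2)^2\bigr]dM$.

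The coefficient of $\int_M|X|^4\,dM$ on the right is
$$\frac2m+\frac{(m-2)^2}{4m^2}=\frac{m^2+4m+4}{4m^2}=\frac{(m+2)^2}{4m^2},$$
which is exactly \eqref{eq:witten-hodge}.

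\textbf{Main obstacle.} The argument is purely algebraic, so the only real risk is bookkeeping. The points to check are the sign convention $\delta X^\flat=-\operatorname{div}X$ and the two-form norm \eqref{eq:two-form-norm} implicit in \eqref{eq:wh-hodge-weitzenbock}, together with the factor $\tfrac12$ in $(\mathcal L_Xg)(X,X)$. I would check these first, since an error there would change the coefficient of the mixed term and hence the completed square. That coefficient is what produces $(m+2)^2$, and with it the cancellation of the quartic term at $m=-2$.
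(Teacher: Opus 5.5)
Your proposal is correct and follows the paper's proof step for step. You split \eqref{eq:wh-hodge-weitzenbock} using \eqref{eq:key}, evaluate $\operatorname{Ric}(X,X)$ from \eqref{GQE}, integrate by parts, and complete the square via $\frac{(m-2)^2}{4m^2}+\frac2m=\frac{(m+2)^2}{4m^2}$; the conventions you flagged ($\delta X^\flat=-\operatorname{div}X$, the two-form norm, the factor $\tfrac12$ in $(\mathcal L_Xg)(X,X)$) all check out.
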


\begin{proof}
By \eqref{eq:wh-hodge-weitzenbock},
\begin{equation}\label{eq:wh-split}
\begin{aligned}
\int_M\left(|dX^\flat|^2+(\operatorname{div}X)^2\right)\,dM
={}&
\int_M\left(|\nabla X|^2-\operatorname{Ric}(X,X)\right)\,dM\\
&+2\int_M\operatorname{Ric}(X,X)\,dM.
\end{aligned}
\end{equation}

The first term on the right-hand side is given by \eqref{eq:key}. On the one hand, contracting
\eqref{GQE} with $X$ gives
\begin{equation}\label{eq:Ric-on-X}
\operatorname{Ric}(X,X)
=\
\lambda|X|^2+\frac1m|X|^4-\frac12X(|X|^2).
\end{equation}

On the other hand, applying \eqref{divergence} with $f=|X|^2$, we have
$$
\int_M X(|X|^2)\,dM
=
-\int_M|X|^2\operatorname{div}X\,dM.
$$

Hence
$$
2\int_M\operatorname{Ric}(X,X)\,dM
=
2\int_M\lambda|X|^2\,dM
+\frac2m\int_M|X|^4\,dM
+\int_M|X|^2\operatorname{div}X\,dM.
$$

Using these in the second term of \eqref{eq:wh-split}, we obtain
$$
\begin{aligned}
\int_M\left[
|dX^\flat|^2+(\operatorname{div}X)^2
-\frac{m-2}{m}|X|^2\operatorname{div}X
\right]\,dM
={}&
\frac2m\int_M|X|^4\,dM\\
&+2\int_M\lambda|X|^2\,dM\\
&+(n-2)\int_M\langle X,\nabla\lambda\rangle\,dM.
\end{aligned}
$$

Completing the square, and since
$ 
\frac{(m-2)^2}{4m^2}+\frac2m
=
\frac{(m+2)^2}{4m^2}
$
yields \eqref{eq:witten-hodge}.
\end{proof}

Thus, we have the following triviality-type result.

\begin{corollary}\label{cor:m-minus-two}
Let $(M^n,g,X,\lambda)$ be a closed generalized $m$-quasi-Einstein
manifold, with $n\geq2$. If $m=-2,\,\, \lambda\leq0$ and $\int_M\langle X,\nabla\lambda\rangle\,dM\leq0,$
then $X\equiv0$.
\end{corollary}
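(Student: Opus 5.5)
We plan to specialize the Witten-type Hodge-energy identity of Theorem~\ref{thm:witten-hodge} to the value $m=-2$. Its only coefficient that depends on $m$ in an essential way is $(m+2)^2/(4m^2)$, which vanishes at $m=-2$. This is the quartic cancellation mentioned earlier. For the remaining coefficient we have $\frac{m-2}{2m}=\frac{-4}{-4}=1$. So \eqref{eq:witten-hodge} becomes
$$
\int_M\left[|dX^\flat|^2+\left(\operatorname{div}X-|X|^2\right)^2\right]dM
=2\int_M\lambda|X|^2\,dM+(n-2)\int_M\langle X,\nabla\lambda\rangle\,dM .
$$

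The left-hand side is nonnegative. On the right, $\lambda\le 0$ gives $\int_M\lambda|X|^2\,dM\le0$. Since $n\ge2$, we have $n-2\ge0$, and the hypothesis $\int_M\langle X,\nabla\lambda\rangle\,dM\le0$ then makes the second term nonpositive. Hence both sides vanish. We obtain $dX^\flat=0$ and $\operatorname{div}X=|X|^2$ pointwise on $M$.

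Next we integrate the relation $\operatorname{div}X=|X|^2$ over the closed manifold $M$. The divergence theorem gives $\int_M|X|^2\,dM=\int_M\operatorname{div}X\,dM=0$, and therefore $X\equiv0$. This step does not even use $dX^\flat=0$.

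The only point needing care is the algebra at $m=-2$: checking that $\frac{m-2}{2m}=1$ and that the quartic coefficient really vanishes, so that no $|X|^4$ term survives with a bad sign. The case $n=2$ causes no difficulty, because there the $\langle X,\nabla\lambda\rangle$ term drops out entirely and the hypothesis on it is not needed.
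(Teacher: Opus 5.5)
Your proposal is correct and is essentially the paper's own proof: set $m=-2$ in \eqref{eq:witten-hodge}, where the quartic coefficient vanishes and $\frac{m-2}{2m}=1$, use the sign hypotheses to force $\operatorname{div}X=|X|^2$, and integrate over $M$. Your side remark is also correct: for $n=2$ the factor $n-2$ kills the $\langle X,\nabla\lambda\rangle$ term, so the hypothesis on $\int_M\langle X,\nabla\lambda\rangle\,dM$ is not needed there.
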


\begin{proof}
Setting $m=-2$ in \eqref{eq:witten-hodge} gives
$$
\int_M\left[
|dX^\flat|^2+
\left(\operatorname{div}X-|X|^2\right)^2
\right]\,dM
=
2\int_M\lambda|X|^2\,dM
+(n-2)\int_M\langle X,\nabla\lambda\rangle\,dM.
$$

By assumption, the right-hand side is nonpositive, whereas the left-hand
side is nonnegative. Hence
$
\operatorname{div}X=|X|^2.
$
Integrating over the closed manifold $M$ yields
$$
0=\int_M\operatorname{div}X\,dM
=\int_M|X|^2\,dM,
$$
and therefore $X\equiv0$.
\end{proof}

\begin{remark}
Corollary~\ref{cor:m-minus-two} confirms the Colling and Dunajski conjecture (see \cite[p.197]{colling2026quasi}) in the case $m=-2$. Indeed, when $\lambda$ is constant and
$\lambda\leq0$, the condition
$ 
\int_M\langle X,\nabla\lambda\rangle\,dM\leq0
$
is automatic and therefore $X\equiv0$. For $n\leq4$, the value $m=-2$
lies in the range $m\leq2-n$ previously considered in \cite{colling2026quasi}, whereas for $n\geq5$ it lies outside that range.
Thus, the corollary proves the $m=-2$ case in every dimension and,
moreover, extends this phenomenon of rigidity to the generalized setting.
For constant $\lambda$, this is the endpoint case $m=-2$ of
Theorem~\ref{thm:main}, whose proof for $m<-2$ in
Section~\ref{sec:proof-main} relies on a different mechanism.
\end{remark}

For the case where the vector field $X$ admits an appropriate bound, we get the following 
consequence from Theorem \ref{thm:witten-hodge}.
\begin{corollary}
Let $(M^n,g,X,\lambda)$, $n\geq2$, be a closed generalized
$m$-quasi-Einstein manifold with $m<0$, $m\neq-2$, and $\lambda<0$.
If
$
\|X\|_{\infty}:=\max_M|X|,\,\,
\int_M\langle X,\nabla\lambda\rangle\,dM\leq0.
$
and
$
\|X\|_{\infty}^2
\leq
-\frac{8m^2}{(m+2)^2}\max_M\lambda,
$
then $X\equiv0$.
\end{corollary}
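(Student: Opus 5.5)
Starting from the Witten-type identity \eqref{eq:witten-hodge} of Theorem~\ref{thm:witten-hodge}, the plan is to show that its right-hand side is nonpositive under the stated hypotheses, and then to conclude exactly as in Corollary~\ref{cor:m-minus-two}. The left-hand side is a sum of squares, hence nonnegative. On the right, the term $(n-2)\int_M\langle X,\nabla\lambda\rangle\,dM$ is nonpositive by assumption, since $n\geq2$. The only positive contribution is the quartic term $\frac{(m+2)^2}{4m^2}\int_M|X|^4\,dM$, and the hypothesis $m\neq-2$ is precisely the case in which it does not cancel. This term must therefore be absorbed by the term $2\int_M\lambda|X|^2\,dM$, which is negative because $\lambda<0$.

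For the absorption, I would set $\mu:=\max_M\lambda<0$ and use the pointwise bounds $|X|^4\leq\|X\|_\infty^2|X|^2$ and $\lambda|X|^2\leq\mu|X|^2$. These give
$$
\frac{(m+2)^2}{4m^2}\int_M|X|^4\,dM+2\int_M\lambda|X|^2\,dM
\leq
\left(\frac{(m+2)^2}{4m^2}\|X\|_\infty^2+2\mu\right)\int_M|X|^2\,dM.
$$
The hypothesis $\|X\|_\infty^2\leq-\frac{8m^2}{(m+2)^2}\mu$ says exactly that the bracket is $\leq0$, because $\frac{(m+2)^2}{4m^2}\cdot\frac{8m^2}{(m+2)^2}=2$. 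So the whole right-hand side of \eqref{eq:witten-hodge} is $\leq0$.

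Comparing with the nonnegative left-hand side, both sides vanish. On the left this gives $dX^\flat=0$ and
$$
\operatorname{div}X=\frac{m-2}{2m}|X|^2 .
$$
Integrating over the closed manifold yields $0=\frac{m-2}{2m}\int_M|X|^2\,dM$. Since $m<0$, we have $m-2\neq0$, so $X\equiv0$. Alternatively, the right-hand side vanishing forces $2\mu\int_M|X|^2\,dM\geq0$ once the quartic term is bounded as above; since $\mu<0$, this again gives $X\equiv0$.

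The main obstacle is the absorption step itself. One must check that the constant $\frac{8m^2}{(m+2)^2}$ matches the coefficient $\frac{(m+2)^2}{4m^2}$ coming from completing the square in Theorem~\ref{thm:witten-hodge}. One must also keep the signs correct in $\lambda|X|^2\leq(\max_M\lambda)|X|^2$, which holds because $|X|^2\geq0$. The hypothesis $m<0$ is only needed to guarantee $m-2\neq0$, so that the final integration step is nondegenerate.
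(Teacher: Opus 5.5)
Your main argument is correct and is essentially the paper's proof: you bound the right-hand side of \eqref{eq:witten-hodge} by $\bigl(\frac{(m+2)^2}{4m^2}\|X\|_\infty^2+2\max_M\lambda\bigr)\int_M|X|^2\,dM\le0$, deduce that both sides vanish, and integrate $\operatorname{div}X=\frac{m-2}{2m}|X|^2$ to get $X\equiv0$. Drop your ``alternatively'' remark, however: the vanishing of the right-hand side does not force $2\max_M\lambda\int_M|X|^2\,dM\ge0$, since combined with your bounds it only gives $\bigl(\frac{(m+2)^2}{4m^2}\|X\|_\infty^2+2\max_M\lambda\bigr)\int_M|X|^2\,dM\ge0$, which is vacuous when the hypothesis on $\|X\|_\infty$ holds with equality, so the divergence integration is what actually closes the proof.
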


\begin{proof}
The assumptions make the right-hand side of \eqref{eq:witten-hodge}
nonpositive, whereas its left-hand side is nonnegative. Hence both
sides vanish, and therefore
$$
\operatorname{div}X=\frac{m-2}{2m}|X|^2.
$$

Integrating over $M$ yields $X\equiv0$.
\end{proof}

\subsection{Interpretation as a twisted Hodge energy}\label{sec:twisted-energy}
For $s\in\mathbb{R}$, define the differential operators $
 d_s\colon\Omega^k(M)\to\Omega^{k+1}(M)$ and  $\delta_s\colon\Omega^k(M)\to\Omega^{k-1}(M)
$ by

\begin{equation}\label{eq:wedge-twist}
d_s=d+sX^\flat\wedge,
\qquad
\delta_s=\delta+s\iota_X,
\end{equation}
where $X^\flat\wedge$ denotes exterior multiplication by $X^\flat$ and
$\iota_X$ denotes contraction with $X$. These operators have degrees
$+1$ and $-1$, respectively.

Since the formal adjoint of the exterior multiplication
by $X^\flat$ is contraction with $X$, the formal adjoint of $d_s$ is
 $
\delta_s=\delta+s\iota_X
 $, i.e, $\delta_s=d_s^*$.
Evaluating these operators at $X^\flat$, we obtain
 $
d_sX^\flat
=dX^\flat+sX^\flat\wedge X^\flat
=dX^\flat
 $
and
 $
\delta_sX^\flat
=\delta X^\flat+s\iota_XX^\flat
=-\operatorname{div}X+s|X|^2.
 $
Therefore,
 $$
\|d_sX^\flat\|_{L^2}^2+\|\delta_sX^\flat\|_{L^2}^2
=
\int_M\left[
|dX^\flat|^2+
\left(\operatorname{div}X-s|X|^2\right)^2
\right]\,dM.
 $$

We now choose
$ 
s=\frac{m-2}{2m}.
$
Then the left-hand side of~\eqref{eq:witten-hodge} is precisely
$
\|d_sX^\flat\|_{L^2}^2+\|\delta_sX^\flat\|_{L^2}^2.
 $
Consequently, identity~\eqref{eq:witten-hodge} can be written as
 $$
\begin{aligned}
\|d_sX^\flat\|_{L^2}^2+\|\delta_sX^\flat\|_{L^2}^2
&=
\frac{(m+2)^2}{4m^2}\int_M|X|^4\,dM
+2\int_M\lambda|X|^2\,dM \\
&\quad +(n-2)\int_M\langle X,\nabla\lambda\rangle\,dM.
\end{aligned}
 $$
 
This is the precise sense in which \eqref{eq:witten-hodge} is a twisted
Hodge-energy identity, in the spirit of Witten deformation~\cite[Section 3]{Witten}.

\subsection{A drift-Laplacian consequence}
We denote by $\Delta_X$ the drift Laplacian (also called diffusion operator or $X$–Laplacian in the literature) associated with $X$, defined on smooth functions $f\in C^\infty(M)$ by
$$
\Delta_X f=\Delta f-\langle X,\nabla f\rangle
=\Delta f-X(f).
$$

It is worth noting that in ~\cite[Section 3]{CatinoMastroliaMonticelliRigoli} Catino et al. consider this operator as the particular case $T=g$ of a more general second-order operator with drift. Combining \eqref{eq:Ric-on-X} with
Lemma~\ref{lem:laplacianX} yields
\begin{equation}\label{eq:drift-norm}
\begin{aligned}
\frac12\Delta_X|X|^2
={}&
|\nabla X|^2
-\lambda|X|^2
-\frac1m|X|^4
+\frac2m|X|^2\operatorname{div}X \\
&+(2-n)\langle X,\nabla\lambda\rangle.
\end{aligned}
\end{equation}

When $\lambda$ is constant, equation~\eqref{eq:drift-norm} reduces to equation~(2) of Lemma~2 in Barros and Ribeiro~\cite{BarrosRibeiro2012}. Hence, \eqref{eq:drift-norm} can be viewed as an extension of that identity to the generalized $m$-quasi-Einstein setting.

Next, using the decomposition (\ref{eq:nablaX-decomposition}) the equation \eqref{eq:drift-norm} becomes
$$
\begin{aligned}
\frac12\Delta_X|X|^2
={}&
\frac14|\mathcal L_Xg|^2
+\frac12|dX^\flat|^2
-\lambda|X|^2
-\frac1m|X|^4 \\
&+\frac2m|X|^2\operatorname{div}X
+(2-n)\langle X,\nabla\lambda\rangle.
\end{aligned}
$$ 

Taking the trace-free part of $\mathcal L_Xg$ and completing the
square involving $\operatorname{div}X$, we arrive at the following identity
\begin{equation}\label{eq:drift-square-completion}
\begin{aligned}
\frac12\Delta_X|X|^2
={}&
\frac14
\left|
\mathcal L_Xg-\frac2n(\operatorname{div}X)g
\right|^2
+\frac12|dX^\flat|^2 \\
&+
\frac1n
\left(
\operatorname{div}X+\frac{n}{m}|X|^2
\right)^2
-\lambda|X|^2
-\frac{m+n}{m^2}|X|^4 \\
&+(2-n)\langle X,\nabla\lambda\rangle.
\end{aligned}
\end{equation}

Thus, taking into account this expression, we derive  the following triviality result.

\begin{corollary}\label{cor:drift-triviality}
Let $(M^n,g,X,\lambda)$, $n\geq2$, be a closed generalized
$m$-quasi-Einstein manifold with $\lambda\leq0$ and $ m\leq-n\,$. When $n\geq3$, suppose that
$ \langle X,\nabla\lambda\rangle\leq0\,\,\text{on }M.$ Then $X\equiv0$.
\end{corollary}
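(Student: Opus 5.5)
The plan is to apply the maximum principle to the drift-Laplacian identity \eqref{eq:drift-square-completion}, evaluated at a maximum point of $|X|^2$. Under the hypotheses, every term on the right-hand side will be nonnegative. The key observation is the quartic coefficient: for $m\leq-n<0$ we have $m+n\leq0$, so
$$
-\frac{m+n}{m^2}|X|^4\geq0 .
$$
The remaining terms are handled as follows.
\begin{itemize}
\item The two squares $\frac14|\mathring{\mathcal L_Xg}|^2+\frac12|dX^\flat|^2$ and $\frac1n(\operatorname{div}X+\frac nm|X|^2)^2$ are nonnegative.
\item The term $-\lambda|X|^2$ is nonnegative because $\lambda\leq0$.
\item The term $(2-n)\langle X,\nabla\lambda\rangle$ vanishes when $n=2$. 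When $n\geq3$ it is nonnegative by the pointwise assumption $\langle X,\nabla\lambda\rangle\leq0$.
\end{itemize}
Hence $\Delta_X|X|^2\geq0$ on $M$, so $|X|^2$ is a subsolution of the drift operator $\Delta_X=\Delta-X(\cdot)$.

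Since $M$ is closed and connected, the strong maximum principle (Hopf) applies: $\Delta_X$ has no zeroth-order term and smooth coefficients, so a subsolution attaining an interior maximum is constant. Thus $|X|^2\equiv c$, and then $\Delta_X|X|^2\equiv0$. It follows that every nonnegative term above vanishes identically. In particular $\operatorname{div}X=-\frac nm|X|^2=-\frac nm c$. Integrating over $M$ gives $0=\int_M\operatorname{div}X\,dM=-\frac nm c\,\mathrm{Vol}(M)$, so $c=0$ and $X\equiv0$.

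This route does not need the strict sign of the quartic coefficient, which matters at the endpoint $m=-n$, where that coefficient is zero. The vanishing of the square $(\operatorname{div}X+\frac nm|X|^2)^2$ combined with the divergence theorem already forces $X\equiv 0$.

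The only point requiring care is the use of the strong maximum principle instead of just integrating. Integrating \eqref{eq:drift-square-completion} does not give zero on the left, since $\int_M\Delta_X f\,dM=\int_M f\operatorname{div}X\,dM$ need not vanish. One could alternatively argue at a maximum point $p_0$, where $\Delta|X|^2\le0$ and $\nabla|X|^2=0$, but that only gives pointwise information at $p_0$. For this reason I would invoke Hopf's strong maximum principle for $\Delta_X$ on the connected closed manifold to obtain constancy of $|X|^2$ globally; this is the step to state carefully.
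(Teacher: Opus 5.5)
Your proof is correct and follows the paper's argument: every term on the right of \eqref{eq:drift-square-completion} is nonnegative, the strong maximum principle for $\Delta_X$ makes $|X|^2$ constant, and then all the nonnegative terms vanish identically. The only difference is at the last step. You use the vanishing of $\left(\operatorname{div}X+\frac nm|X|^2\right)^2$ together with the divergence theorem for every $m\leq-n$. The paper instead uses the quartic term when $m<-n$ and keeps the divergence-square argument for the endpoint $m=-n$.
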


\begin{proof}
Under the assumptions above, every term on the right-hand side of
\eqref{eq:drift-square-completion} is nonnegative. Hence
$
\Delta_X|X|^2\geq0.
$
Since $M$ is closed, the strong maximum principle applied to the drift operator $\Delta_X$ implies that $|X|^2$ is constant (see Pucci, Rigoli and Serrin~\cite[Theorem~5.6]{PucciRigoliSerrin} and also Catino et al.~\cite[Section~3]{CatinoMastroliaMonticelliRigoli} for related maximum principle results for drifted operators). It then follows from \eqref{eq:drift-square-completion} that each of the nonnegative terms on its right-hand side vanishes identically.

If $m<-n$, then
$
-\frac{m+n}{m^2}|X|^4=0,
$
and therefore $X\equiv0$. If $m=-n$, the vanishing of the divergence
square gives
$
\operatorname{div}X=|X|^2.
$
Integrating this identity over $M$, we obtain
$$
0=\int_M\operatorname{div}X\,dM
  =\int_M|X|^2\,dM,
$$
and hence $X\equiv0$.
\end{proof}

\begin{remark}
When $n=2$, the term involving $\nabla\lambda$ in
\eqref{eq:drift-square-completion} vanishes, so no additional
assumption on $\langle X,\nabla\lambda\rangle$ is required. For constant $\lambda$, the range $m\leq-n$ is contained in the
range $m\leq2-n$ covered by Colling and Dunajski \cite{colling2026quasi}. Thus,
Corollary~\ref{cor:drift-triviality} does not enlarge the known
parameter range in the classical $m$-quasi-Einstein setting.
However, it does extend the corresponding triviality result to
generalized $m$-quasi-Einstein manifolds under the pointwise sign
condition
$ 
\langle X,\nabla\lambda\rangle\leq0
$ on $M^n$.
\end{remark}

\subsection{Codifferential identities and twisted operators}\label{sec:wedge-twisted-factorization}

The twisted Hodge identity obtained in the preceding section singles out
the value $m=-2$ through the cancellation of its quartic term. We now
develop a different approach, based on the twisted operators
\eqref{eq:wedge-twist}, that identifies $m=-4$ as a
distinguished self-adjoint case and, more generally, provides the
differential identities on which the proof of Theorem~\ref{thm:main} in
Section~\ref{sec:proof-main} rests. For clarity, we
assume throughout the remainder of this section and in Section~\ref{sec:proof-main} that $\lambda$ is constant. 
From \eqref{eq:wedge-twist}, one immediately obtains
\begin{equation}\label{eq:wedge-twist-square}
 d_s^2=s\,dX^\flat\wedge,
 \qquad
 \delta_s^2=s\,(dX^\flat\wedge)^*.
\end{equation}

We first derive an identity for $\delta dX^\flat$ by combining
the trace and divergence of the $m$-quasi-Einstein equation
with the Hodge--Weitzenb\"ock formula. We then express this
identity in terms of the twisted operators.

\begin{lemma}\label{lem:intrinsic-codifferential}
Let $(M^n,g,X,\lambda)$ be a $m$-quasi-Einstein
manifold. Then
\begin{equation}\label{eq:intrinsic-codifferential}
\begin{aligned}
  \delta dX^\flat
  ={}&-\frac{m+2}{m}\,\iota_XdX^\flat
     +d\bigl(\operatorname{div}X-|X|^2\bigr) \\
    &-\frac2m\bigl(\operatorname{div}X-|X|^2\bigr)X^\flat
     +2\lambda X^\flat.
\end{aligned}
\end{equation}
\end{lemma}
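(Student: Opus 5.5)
The plan is to compute $\delta dX^\flat$ pointwise from the Hodge--Weitzenb\"ock formula, and then eliminate the rough Laplacian of $X$ using the divergence of \eqref{GQE}. Throughout, $\lambda$ is constant, so $\nabla\lambda=0$ in \eqref{eq:divergence-section2}.

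\textbf{Step 1 (Weitzenb\"ock).} On $1$-forms, $(d\delta+\delta d)X^\flat=\nabla^*\nabla X^\flat+\operatorname{Ric}(X)^\flat$. Since $\delta X^\flat=-\operatorname{div}X$, we have $d\delta X^\flat=-d(\operatorname{div}X)$. Writing $\Delta X=-\nabla^*\nabla X$ for the rough Laplacian, this gives
$$
\delta dX^\flat=-(\Delta X)^\flat+\operatorname{Ric}(X)^\flat+d(\operatorname{div}X).
$$

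\textbf{Step 2 (eliminate $\Delta X$).} I will use the standard commutation formula
$$
\bigl(\operatorname{div}\mathcal L_Xg\bigr)^\sharp=\Delta X+\nabla\operatorname{div}X+\operatorname{Ric}(X).
$$
I substitute this into \eqref{eq:divergence-section2} with $\nabla\lambda=0$, and replace $\nabla S$ by the gradient of the trace identity \eqref{eq2}, namely $\nabla S=\frac1m\nabla|X|^2-\nabla\operatorname{div}X$. The $\nabla\operatorname{div}X$ terms cancel, and one should obtain
$$
\Delta X=\frac2m\bigl((\operatorname{div}X)X+\nabla_XX\bigr)-\frac1m\nabla|X|^2-\operatorname{Ric}(X).
$$
Inserting this into Step 1 yields
$$
\delta dX^\flat=2\operatorname{Ric}(X)^\flat-\frac2m(\operatorname{div}X)X^\flat-\frac2m(\nabla_XX)^\flat+\frac1m d|X|^2+d\operatorname{div}X.
$$

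\textbf{Step 3 (eliminate $\operatorname{Ric}(X)$ and $\nabla_XX$).} Contracting \eqref{GQE} with $X$ in one slot gives
$$
\operatorname{Ric}(X)=\lambda X+\frac1m|X|^2X-\frac12\Bigl(\nabla_XX+\frac12\nabla|X|^2\Bigr),
$$
where I use $(\mathcal L_Xg)(X,\cdot)=\langle\nabla_XX,\cdot\rangle+\frac12 d|X|^2$. Next I rewrite $\nabla_XX$ via the antisymmetric part: with the convention fixed in \eqref{eq:two-form-norm},
$$
\iota_XdX^\flat=(\nabla_XX)^\flat-\tfrac12 d|X|^2 .
$$
Substituting $(\nabla_XX)^\flat=\iota_XdX^\flat+\frac12 d|X|^2$ everywhere, the bookkeeping should go as follows:
\begin{itemize}
\item the $\iota_XdX^\flat$ coefficients combine to $-1-\frac2m=-\frac{m+2}{m}$;
\item the $d|X|^2$ coefficients $-\frac12-\frac1m+\frac1m-\frac12$ sum to $-1$;
\item the zeroth-order terms are $2\lambda X^\flat+\frac2m|X|^2X^\flat-\frac2m(\operatorname{div}X)X^\flat$.
\end{itemize}
Grouping $d\operatorname{div}X-d|X|^2$ and $-\frac2m(\operatorname{div}X-|X|^2)X^\flat$ then gives exactly \eqref{eq:intrinsic-codifferential}.

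The computation is routine; the main risk is sign and normalization conventions. The identity depends on three conventions: the sign of $\delta$ (so that $\delta X^\flat=-\operatorname{div}X$), the sign of the rough Laplacian in the Weitzenb\"ock formula, and the normalization of $\iota_XdX^\flat$ consistent with \eqref{eq:two-form-norm}. I will check the final formula on a simple sanity case, such as a Killing $X$ with $m\to\infty$, where $\delta dX^\flat=2\operatorname{Ric}(X)^\flat$ must hold.
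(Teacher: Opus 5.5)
Your proposal is correct and follows essentially the same route as the paper: the Hodge--Weitzenb\"ock formula for $\delta dX^\flat$, the identity $\operatorname{div}(\mathcal L_Xg)=\operatorname{tr}_g\nabla^2X^\flat+d(\operatorname{div}X)+\operatorname{Ric}(X,\cdot)$ combined with the divergence and the differentiated trace of \eqref{GQE} to eliminate the rough Laplacian, then contracting \eqref{GQE} with $X$ and using $(\nabla_XX)^\flat=\iota_XdX^\flat+\frac12 d|X|^2$. Your coefficient bookkeeping checks out, and the only cosmetic difference is that you solve for $\Delta X$ and substitute it into the Weitzenb\"ock formula, whereas the paper substitutes the Weitzenb\"ock formula into the divergence identity.
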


\begin{proof}
In addition to the previously established identity \eqref{div XX}, we shall use

$$
  \operatorname{div}(\mathcal L_Xg)
  =\operatorname{tr}_g\nabla^2X^\flat
   +d(\operatorname{div}X)+\operatorname{Ric}(X,\cdot).
$$

Since $\lambda$ is constant throughout this section, combining this formula with
\eqref{eq:divergence-section2} and the differential of the trace identity
\eqref{eq2} yields
\begin{equation}\label{eq:divergence-section-six}
\begin{aligned}
0={}&\operatorname{tr}_g\nabla^2X^\flat
+\operatorname{Ric}(X,\cdot)
+\frac1m d|X|^2 -\frac2m(\operatorname{div}X)X^\flat
-\frac2m(\nabla_XX)^\flat.
\end{aligned}
\end{equation}

On the other hand, the Hodge--Weitzenb\"ock formula and
$\delta X^\flat=-\operatorname{div}X$ yield
\begin{equation}\label{eq:hodge-section-six}
  \delta dX^\flat
  =-\operatorname{tr}_g\nabla^2X^\flat
   +d(\operatorname{div}X)+\operatorname{Ric}(X,\cdot).
\end{equation}

Moreover, evaluating \eqref{GQE} on $X$ gives
\begin{equation}\label{eq:ricci-on-x-section-six}
  2\operatorname{Ric}(X,\cdot)
  =2\lambda X^\flat-(\nabla_XX)^\flat
   -\frac12d|X|^2+\frac2m|X|^2X^\flat.
\end{equation}

Finally, the definition of the exterior derivative gives
\begin{equation}\label{eq:intrinsic-nabla-x}
  (\nabla_XX)^\flat
  =\iota_XdX^\flat+\frac12d|X|^2.
\end{equation}

Substituting \eqref{eq:hodge-section-six} and
\eqref{eq:ricci-on-x-section-six} into
\eqref{eq:divergence-section-six}, and then using
\eqref{eq:intrinsic-nabla-x} to eliminate $(\nabla_XX)^\flat$, gives
\eqref{eq:intrinsic-codifferential}.
\end{proof}


The preceding codifferential identity can now be reorganized as a scalar
equation involving two different twisting parameters. This is the key
factorization from which the exceptional value of $m$ will emerge.

\begin{proposition}\label{prop:mixed-twisted-factorization}
Let $(M^n,g,X,\lambda)$ be a $m$-quasi-Einstein
manifold. Then,
\begin{equation}\label{eq:mixed-twisted-factorization}
  \delta_{\frac{m+2}{m}}d_{-\frac2m}
  \left(
    \lambda+\frac{|X|^2-\operatorname{div}X}{m}
  \right)
  =-\frac{m+2}{m^2}|dX^\flat|^2.
\end{equation}
\end{proposition}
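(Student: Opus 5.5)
The plan is to apply the codifferential $\delta$ to both sides of Lemma~\ref{lem:intrinsic-codifferential}. Since $\delta^2=0$, the left-hand side $\delta\delta dX^\flat$ vanishes. This leaves a scalar identity that should reorganize into \eqref{eq:mixed-twisted-factorization}.

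\textbf{Step 1: the target operator on functions.} Set
$$
u=\operatorname{div}X-|X|^2,\qquad f=\lambda-\frac{u}{m},\qquad a=\frac{m+2}{m},\qquad b=-\frac2m .
$$
Since $\lambda$ is constant, $df=-\frac1m\,du$ and $u=m(\lambda-f)$. For a function $f$, the definitions \eqref{eq:wedge-twist} give $d_bf=df+bfX^\flat$. Using $\delta(fX^\flat)=-f\operatorname{div}X-X(f)$ and $\delta df=-\Delta f$, we get
$$
\delta_a d_b f=-\Delta f+(a-b)X(f)-bf\operatorname{div}X+ab\,f|X|^2 .
$$
Here $a-b=\frac{m+4}{m}$. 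So the claim is an identity for this second-order expression in $u$, to be verified using the equation.

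\textbf{Step 2: apply $\delta$ to Lemma~\ref{lem:intrinsic-codifferential}.} Write $F=dX^\flat$. Applying $\delta$ to the right-hand side of \eqref{eq:intrinsic-codifferential} gives
$$
0=-\frac{m+2}{m}\,\delta(\iota_XF)-\Delta u+\frac2m\bigl(u\operatorname{div}X+X(u)\bigr)-2\lambda\operatorname{div}X .
$$
The only nontrivial term is $\delta(\iota_XF)$. I would use the pointwise identity for a closed 2-form $F=dX^\flat$:
$$
\delta(\iota_XF)=-|dX^\flat|^2-\iota_X\delta F,
$$
with the norm convention \eqref{eq:two-form-norm}. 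The signs must be checked in local coordinates, using the skew-symmetry of $F$ and the fact that the symmetric part of $\nabla X$ pairs to zero with $F$.

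\textbf{Step 3: feed back the lemma and reorganize.} Now substitute $\delta F$ from \eqref{eq:intrinsic-codifferential} once more. Since $\iota_X\iota_XF=0$, this gives
$$
\iota_X\delta F=X(u)-\frac2m u|X|^2+2\lambda|X|^2 .
$$
After this substitution, everything except the $|dX^\flat|^2$ term is a linear expression in $u$, $X(u)$, $\Delta u$, $u\operatorname{div}X$, $u|X|^2$, $\lambda\operatorname{div}X$ and $\lambda|X|^2$. Finally, replace $u=m(\lambda-f)$. The $\lambda$-terms should then recombine, via $\operatorname{div}X=u+|X|^2$, into the constant-coefficient parts $-bf\operatorname{div}X+ab f|X|^2$ of $\delta_ad_bf$. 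Multiplying through by $-\frac1m$ should then produce exactly $\delta_ad_bf=-\frac{m+2}{m^2}|dX^\flat|^2$.

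\textbf{Main obstacle.} I expect the main difficulty to be bookkeeping rather than ideas. Specifically, the sign and normalization in the formula for $\delta(\iota_XF)$ must be fixed, since the coefficient $-\frac{m+2}{m^2}$ of $|dX^\flat|^2$ depends on it. I would first test the identity on a Killing field of constant length on a flat torus, with $\lambda=0$, to calibrate these conventions. I would also check that the $\lambda$-terms cancel, which is where the constancy of $\lambda$ is used.
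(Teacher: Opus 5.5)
Your strategy is sound, and once one sign is fixed it is the paper's computation unpacked. You apply $\delta$ to Lemma~\ref{lem:intrinsic-codifferential}, use $\delta^2=0$, and substitute the lemma back in for $\iota_X\delta dX^\flat$. The paper instead adds $\frac{m+2}{m}\iota_XdX^\flat$ to both sides, rewrites the result as $d_{-2/m}F=-\frac1m\,\delta_{(m+2)/m}dX^\flat$, and applies $\delta_{(m+2)/m}$. Its only input is then $\delta_s^2=s(\delta\iota_X+\iota_X\delta)=s(dX^\flat\wedge)^*$. Your formula for $\delta_ad_bf$, your computation of $\delta$ of the right-hand side in Step 2, and your expression for $\iota_X\delta F$ in Step 3 are all correct.

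\textbf{The gap.} The one nontrivial identity you invoke has the wrong sign. For $F=dX^\flat$ the correct identity is
\[
\delta(\iota_XF)+\iota_X\delta F=+|dX^\flat|^2 .
\]
To see this, use $F_{ij}=\nabla_iX_j-\nabla_jX_i$, $(\iota_XF)_j=X^iF_{ij}$, $\delta\alpha=-\nabla^j\alpha_j$ and $(\delta F)_i=-\nabla^jF_{ji}$. The terms containing $\nabla F$ cancel, leaving
\[
-(\nabla^jX^i)F_{ij}=\tfrac12F_{ij}F^{ij}=|dX^\flat|^2 ,
\]
because only the skew part $-\tfrac12F_{ij}$ of $\nabla_jX_i$ pairs with $F_{ij}$. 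This is exactly the paper's $\delta_s^2=s(dX^\flat\wedge)^*$ evaluated on $dX^\flat$. As a concrete check, take $X=-y\partial_x+x\partial_y$ on $\mathbb R^2$. Then $F=2\,dx\wedge dy$, $\delta F=0$, $\iota_XF=-2(x\,dx+y\,dy)$, and $\delta(\iota_XF)=4=|F|^2$.

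The sign matters. If you run Step 3 with your version, everything collects to $\delta_ad_bf=+\frac{m+2}{m^2}|dX^\flat|^2$, the opposite of the claim. So the sentence ``multiplying through by $-\frac1m$ should then produce exactly \dots'' does not follow from what you wrote. This sign is precisely what Theorem~\ref{thm:main} uses, through $\mathcal P_mF\ge0$ for $m<-2$. Your proposed calibration would not catch the error either. A Killing field of constant length on a flat torus is parallel, so $dX^\flat=0$ and the $|dX^\flat|^2$ term is invisible. You need a test field with $dX^\flat\neq0$, such as the rotation field above.

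\textbf{With the sign corrected, your Step 3 closes.} Collecting terms gives
\[
\begin{aligned}
&-\Delta u+\frac{m+4}{m}X(u)+\frac2m\,u\operatorname{div}X-\frac{2(m+2)}{m^2}\,u|X|^2\\
&\qquad-2\lambda\operatorname{div}X+\frac{2(m+2)}{m}\,\lambda|X|^2=\frac{m+2}{m}\,|dX^\flat|^2 .
\end{aligned}
\]
Substituting $u=m(\lambda-f)$ turns the left-hand side into exactly $-m\,\delta_ad_bf$. The $\lambda$-terms merge with the $u$-terms into $-bf\operatorname{div}X+ab\,f|X|^2$ directly, so you do not need $\operatorname{div}X=u+|X|^2$. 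This yields $\delta_ad_bf=-\frac{m+2}{m^2}|dX^\flat|^2$, as claimed.
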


\begin{proof}
Adding $\frac{m+2}{m}\iota_XdX^\flat$ to both sides of
\eqref{eq:intrinsic-codifferential}, we find
\begin{equation}\label{eq:first-twisted-step}
\begin{aligned}
  \delta_{\frac{m+2}{m}}dX^\flat
  ={}&d\bigl(\operatorname{div}X-|X|^2\bigr) \\
     &-\frac2m\bigl(\operatorname{div}X-|X|^2\bigr)X^\flat
      +2\lambda X^\flat.
\end{aligned}
\end{equation}

Since $\lambda$ is constant, the definition of $d_{-2/m}$ shows that
\eqref{eq:first-twisted-step} is equivalent to
\begin{equation}\label{eq:second-twisted-step}
  d_{-\frac2m}
  \left(
    \lambda+\frac{|X|^2-\operatorname{div}X}{m}
  \right)
  =-\frac1m\delta_{\frac{m+2}{m}}dX^\flat.
\end{equation}

Using first \eqref{eq:wedge-twist-square} and the norm convention
\eqref{eq:two-form-norm}, we have
$$
\delta_s^2dX^\flat
=
s(dX^\flat\wedge)^*(dX^\flat)
=
s|dX^\flat|^2.
$$

Applying now $\delta_{(m+2)/m}$ to \eqref{eq:second-twisted-step} and taking
$s=(m+2)/m$, we obtain \eqref{eq:mixed-twisted-factorization}.
\end{proof}

The significance of the two twisting parameters becomes transparent
through the quadratic form associated with the mixed operator. Define
$$
\mathcal Q_m(f)
:=
\left\langle
f,\delta_{\frac{m+2}{m}}d_{-\frac2m}f
\right\rangle_{L^2(M)}
=
\int_M
f\,\delta_{\frac{m+2}{m}}d_{-\frac2m}f\,dM,
\qquad f\in C^\infty(M).
$$

Since
$\delta_{\frac{m+2}{m}}
=d_{\frac{m+2}{m}}^*$, adjointness and \eqref{eq:wedge-twist} give
$$ 
\begin{aligned}
\mathcal Q_m(f)
&=
\int_M
\left\langle
d_{\frac{m+2}{m}}f,
d_{-\frac2m}f
\right\rangle dM\\
&=
\int_M
\left\langle
df+\frac{m+2}{m}fX^\flat,
df-\frac2m fX^\flat
\right\rangle dM\\
&=
\int_M
\left[
|df|^2+f\langle X,\nabla f\rangle
-\frac{2(m+2)}{m^2}|X|^2f^2
\right]dM.
\end{aligned}
$$

Completing the square, we obtain
\begin{equation}\label{eq:completed-mixed-quadratic-form}
\mathcal Q_{m}(f)
  =\|d_{1/2}f\|_{L^2}^2
   -\frac{(m+4)^2}{4m^2}\int_M|X|^2f^2\,dM.
\end{equation}

The correction term in \eqref{eq:completed-mixed-quadratic-form}
vanishes precisely at $m=-4$, the unique value for which the two
twisting parameters coincide. Then both are equal to $1/2$, and
$
\delta_{1/2}d_{1/2}=d_{1/2}^*d_{1/2}\geq0.
$
For $m\neq-4$ the two twisting parameters differ, the mixed
operator $\delta_{\frac{m+2}{m}}d_{-\frac2m}$ is not self-adjoint, and
the quadratic form \eqref{eq:completed-mixed-quadratic-form} carries a
negative correction term. The sign information needed for rigidity must
then be extracted differently; this is done in
Section~\ref{sec:proof-main}, for every $m<-2$, by means of a weighted
energy identity for a positive principal eigenfunction of the mixed
operator. We first treat the self-adjoint case.
\subsection{Rigidity at \texorpdfstring{$m=-4$}{m=-4}: the self-adjoint case}

Setting $m=-4$ in Proposition~\ref{prop:mixed-twisted-factorization}
and \eqref{eq:completed-mixed-quadratic-form} yields
\begin{equation}\label{eq:m-minus-four-scalar-identity}
  \delta_{1/2}d_{1/2}
  \left[
    \lambda+\frac14\bigl(\operatorname{div}X-|X|^2\bigr)
  \right]
  =\frac18|dX^\flat|^2,
\end{equation}
and, for every $f\in C^\infty(M)$,
$$ 
  \mathcal Q_{-4}(f)
  =\|d_{1/2}f\|_{L^2}^2
  =\int_M\left|df+\frac12fX^\flat\right|^2\,dM.
$$

On functions, \eqref{eq:wedge-twist} gives
$$
  d_{1/2}f=df+\frac12fX^\flat,
  \qquad
  \delta_{1/2}=\delta+\frac12\iota_X.
$$

Using $\delta df=-\Delta f$ and
$
  \delta(fX^\flat)
  =f\,\delta X^\flat-\iota_{\nabla f}X^\flat
  =-f\operatorname{div}X-X(f),
 $
we obtain
\begin{align}
  \delta_{1/2}d_{1/2}f
  &=
  \left(\delta+\frac12\iota_X\right)
  \left(df+\frac12fX^\flat\right) \notag\\
  &=
  -\Delta f-\frac12X(f)
  -\frac12(\operatorname{div}X)f
  +\frac12X(f)+\frac14|X|^2f \notag\\
  &=
  -\Delta f-\frac12(\operatorname{div}X)f
  +\frac14|X|^2f.
  \label{eq:m-minus-four-operator}
\end{align}

This is the case $a=b=\tfrac12$ of the general formula
\eqref{eq:operator-general} for the operators $P_{a,b}=\delta_ad_b$
given in Section~\ref{sec:proof-main}.
We are now ready to establish the following result.

\begin{theorem}\label{thm:rigidity-m-minus-four}
Let $(M^n,g,X,\lambda)$ be a closed $m$-quasi-Einstein manifold with
constant $\lambda\leq0$. If $m=-4$, then $X\equiv0$.
\end{theorem}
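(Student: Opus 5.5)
The plan is to exploit the self-adjoint structure at $m=-4$. Put
$$
u:=\lambda+\tfrac14\bigl(\operatorname{div}X-|X|^2\bigr),\qquad P:=\delta_{1/2}d_{1/2}.
$$
By \eqref{eq:m-minus-four-scalar-identity} and \eqref{eq:m-minus-four-operator},
$$
Pu=-\Delta u-\tfrac12(\operatorname{div}X)u+\tfrac14|X|^2u=\tfrac18|dX^\flat|^2\ge 0.
$$
$P$ is a Schrödinger-type operator and $P=d_{1/2}^*d_{1/2}\ge0$. So its principal eigenvalue $\mu_1\ge0$ is simple, with a positive eigenfunction $\varphi$. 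I split into two cases according to whether $\mu_1>0$ or $\mu_1=0$. The latter happens exactly when some $f>0$ satisfies $d_{1/2}f=0$, that is, $X^\flat=-2\,d\log f$ is exact.

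\emph{Case $\mu_1>0$.} I would invoke the maximum principle for operators with positive principal eigenvalue. Write $u=\varphi w$; then $w$ satisfies an equation with no zeroth-order term except $\mu_1 w$, namely
$$
-\Delta w-2\langle\nabla\log\varphi,\nabla w\rangle+\mu_1 w=\varphi^{-1}Pu\ge0.
$$
At a negative minimum of $w$ the left side would be negative, a contradiction. Hence $u\ge0$, i.e. $\operatorname{div}X\ge|X|^2-4\lambda\ge|X|^2$. Integrating over $M$ gives $0\ge\int_M|X|^2\,dM-4\lambda\,\mathrm{Vol}(M)\ge\int_M|X|^2\,dM$, so $X\equiv0$. (This case cannot actually have $X\neq0$ and exact.)

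\emph{Case $\mu_1=0$.} Then $X=\nabla h$ for some $h$, so $dX^\flat=0$ and $Pu=0$. Since the kernel is spanned by $\varphi=e^{-h/2}$, we get $u=c\,e^{-h/2}$ for a constant $c$. Set $\psi=e^{-h}>0$. A direct computation gives $\Delta\psi=\psi(|\nabla h|^2-\Delta h)$, so $\operatorname{div}X-|X|^2=-\Delta\psi/\psi$, and the equation becomes
$$
-\frac{\Delta\psi}{4\psi}=c\,\psi^{1/2}-\lambda .
$$
Multiplying by $\psi^{p}$ and integrating by parts gives, for every real $p$,
$$
\frac{p-1}{4}\int_M\psi^{p-2}|\nabla\psi|^2\,dM=c\,F(p+\tfrac12)-\lambda F(p),\qquad F(q):=\int_M\psi^q\,dM .
$$
With $p=1$ this yields $c=\lambda F(1)/F(3/2)$. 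With $p=-1/2$ it yields
$$
-\tfrac38\int_M\psi^{-5/2}|\nabla\psi|^2\,dM=\lambda\,\frac{F(1)F(0)-F(-\tfrac12)F(\tfrac32)}{F(3/2)} .
$$
By Hölder, $q\mapsto\log F(q)$ is convex. Since $-\tfrac12<0<1<\tfrac32$ with equal sums of the outer and inner pairs, convexity gives $F(-\tfrac12)F(\tfrac32)\ge F(0)F(1)$. As $\lambda\le0$, the right-hand side is therefore $\ge0$ while the left-hand side is $\le0$. Both must vanish, so $\psi$ is constant, $h$ is constant, and $X\equiv0$.

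I expect the main obstacle to be the second case. There the positivity of $\mu_1$ is lost and the sign of $c$ turns out to be nonpositive, so the simple integration argument of the first case fails. The step needing care is choosing the right test powers, $p=1$ and $p=-\tfrac12$, so that the log-convexity inequality closes the argument. A secondary point to verify carefully is the maximum-principle step of the first case: the conjugation $u=\varphi w$ must produce the stated equation with the $+\mu_1 w$ term and no other zeroth-order term.
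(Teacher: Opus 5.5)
Your proof is correct. In the case $\mu_1>0$ (equivalently, $X^\flat$ not exact) you follow the paper's argument: trivial kernel of $\delta_{1/2}d_{1/2}$, a positive principal eigenfunction, the conjugation \eqref{eq:ground-state-transform}, the minimum principle giving $u\geq0$, and integration.

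You genuinely differ from the paper in the gradient case. There the paper either cites Kim--Kim and Colling--Dunajski or uses Lemma~\ref{lem:gradient}. That lemma rewrites $F=ce^{2f/m}$ as \eqref{eq:gradient} for $h=e^{-f/m}$. At $m=-4$ this is $h=\psi^{-1/4}$ in your notation, and your equation multiplied by $h^2$ is exactly \eqref{eq:gradient}. The paper then evaluates at the extrema of $h$, which requires a separate argument for $\lambda=0$, namely harmonicity of $h^m$.

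Your argument is purely integral. The $p=1$ identity fixes $c=\lambda F(1)/F(3/2)$, and log-convexity of $q\mapsto\int_M\psi^q\,dM$ then handles $\lambda<0$ and $\lambda=0$ at once.

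The choice of test power is also less delicate than you suggest. Any $p<1$ works, since $p$ and $\tfrac32$ are then the outer pair around $p+\tfrac12$ and $1$ with the same sum. Moreover, replacing $\psi^{1/2}$ by $\psi^{-2/m}$, the same computation reproves Lemma~\ref{lem:gradient} for every $m<0$.

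The paper's route is pointwise and short. Yours avoids the maximum principle in the gradient case and the split on the sign of $\lambda$.
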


\begin{proof}
If $X^\flat$ is exact, then $X$ is a gradient vector field, and as observed by Colling and Dunajski immediately before
\cite[Proposition~2.4]{colling2026quasi}, in the gradient case their
formula~(2.8) reduces to the warped-product identity of Kim and Kim
\cite[equation~(1.3)]{KimKim2003}. For $\lambda\leq0$, the
corresponding maximum-principle argument yields $X\equiv0$ (see also Lemma~\ref{lem:gradient} below, where a self-contained proof based on \eqref{eq:second-twisted-step} is given for every $m<0$). We may therefore assume that
$X^\flat$ is not exact.

We first claim that
\begin{equation}\label{eq:trivial-kernel}
    \ker(\delta_{1/2}d_{1/2})=\{0\}.
\end{equation}

Indeed, if $\delta_{1/2}d_{1/2}h=0$, then
$$
0
=
\left\langle h,\delta_{1/2}d_{1/2}h\right\rangle_{L^2(M)}
=
\mathcal Q_{-4}(h)
=
\|d_{1/2}h\|_{L^2(M)}^2.
$$

Hence $d_{1/2}h=0$, that is,
$ 
dh+\frac12hX^\flat=0.
$
If $h$ vanishes at one point, the preceding first-order equation
along smooth curves implies that $h\equiv0$. Otherwise $h$ is
nowhere zero and
$ 
X^\flat=-2d\log|h|,
$
contradicting the assumption that $X^\flat$ is not exact. 
Since $\delta_{1/2}d_{1/2}$ is elliptic, self-adjoint, and
nonnegative, \eqref{eq:trivial-kernel} implies that its lowest
eigenvalue $\mu_1$ is strictly positive. Let $\varphi>0$ be a
corresponding eigenfunction:
$$
\delta_{1/2}d_{1/2}\varphi=\mu_1\varphi.
$$

For every $f\in C^\infty(M)$, applying
~\eqref{eq:m-minus-four-operator} to $\varphi f$ and using the product rule
for the Laplacian, we obtain
\begin{align*}
\delta_{1/2}d_{1/2}(\varphi f)
&=
-\Delta(\varphi f)
-\frac12(\operatorname{div}X)\varphi f
+\frac14|X|^2\varphi f\\
&=
-\varphi\Delta f
-2\langle\nabla\varphi,\nabla f\rangle
+f\left(
-\Delta\varphi
-\frac12(\operatorname{div}X)\varphi
+\frac14|X|^2\varphi
\right)\\
&=
-\varphi\Delta f
-2\langle\nabla\varphi,\nabla f\rangle
+\mu_1\varphi f.
\end{align*}

Since $\nabla\varphi=\varphi\nabla\log\varphi$, this becomes
\begin{equation}\label{eq:ground-state-transform}
\delta_{1/2}d_{1/2}(\varphi f)
=
\varphi\left(
-\Delta f
-2\langle\nabla\log\varphi,\nabla f\rangle
+\mu_1f
\right).
\end{equation}

Set
$$
f=
\frac{\lambda+\frac14(\operatorname{div}X-|X|^2)}{\varphi}.
$$

Suppose that $f$ attained a negative minimum, then at that point
$\nabla f=0$ and $\Delta f\geq0$.

Hence
\eqref{eq:ground-state-transform} would give
$$
\delta_{1/2}d_{1/2}(\varphi f)<0,
$$
because $\mu_1f<0$. This contradicts \eqref{eq:m-minus-four-scalar-identity}, whose
right-hand side is nonnegative. Therefore $f\geq0$.

Since
$\varphi>0$, it follows that
$$
\lambda+\frac14(\operatorname{div}X-|X|^2)\geq0
\qquad\text{on }M.
$$

Integrating over $M$, we obtain
$$
0\leq
\lambda\operatorname{Vol}(M)
-\frac14\int_M|X|^2\,dM
\leq0.
$$

Hence $\lambda=0$ and $X\equiv0$, contradicting the assumption that
$X^\flat$ is not exact. This completes the proof.
\end{proof}

\section{\texorpdfstring{Proof of Theorem~\ref{thm:main}}{Proof of the main theorem}}\label{sec:proof-main}

Throughout this section, we keep the twisted
operators $d_s$ and $\delta_s$ of \eqref{eq:wedge-twist} and the norm
convention \eqref{eq:two-form-norm}, and we write
$$
\langle\alpha,\beta\rangle_{L^2}:=\int_M\langle\alpha,\beta\rangle\,dM,
\qquad
\|\alpha\|_{L^2}^2:=\langle\alpha,\alpha\rangle_{L^2},
$$
for real differential forms $\alpha,\beta$ of the same degree, functions
included. As before, $\Delta=\operatorname{div}\nabla$, so that
$\delta df=-\Delta f$.

\subsection{The operators \texorpdfstring{$P_{a,b}$}{P(a,b)}}
For $a,b\in\mathbb R$, consider the second-order operator
$P_{a,b}:=\delta_ad_b$ acting on functions. Using $\delta df=-\Delta f$
and
$\delta(fX^\flat)=f\,\delta X^\flat-\iota_{\nabla f}X^\flat
=-f\operatorname{div}X-X(f)$, exactly as in the derivation of
\eqref{eq:m-minus-four-operator}, we obtain
\begin{align}
P_{a,b}f
&=(\delta+a\iota_X)(df+bfX^\flat)\notag\\
&=-\Delta f-bX(f)-b(\operatorname{div}X)f+aX(f)+ab|X|^2f\notag\\
&=-\Delta f+(a-b)X(f)-b(\operatorname{div}X)f+ab|X|^2f.
\label{eq:operator-general}
\end{align}
The family $P_{a,b}$ naturally contains the elliptic operator underlying the Colling--Dunajski construction as a particular case. Indeed, for $a=0$ and $b=1$,
$$
P_{0,1}f
=-\Delta f-X(f)-(\operatorname{div}X)f
=-\bigl(\Delta f+\operatorname{div}(fX)\bigr).
$$
Thus the positive function $\Gamma$ used in the Colling--Dunajski argument, see equation (2.6) in \cite{colling2026quasi}, characterized by
$$
\Delta\Gamma+\operatorname{div}(\Gamma X)=0,
$$
is precisely a positive element of the kernel of $P_{0,1}$.

Since $\delta_s=d_s^*$, the formal $L^2$-adjoint of $P_{a,b}$ is
\begin{equation}\label{eq:adjoint-general}
P_{a,b}^*=(\delta_ad_b)^*=d_b^*\delta_a^*=\delta_bd_a=P_{b,a}.
\end{equation}

In particular, $P_{a,b}$ is self-adjoint precisely when $a=b$ or $X$ is null.

The operator $P_{a,b}$ has real smooth coefficients and is uniformly
elliptic, since its second-order part is $-\Delta$. Thus, it has a real
principal eigenvalue $\mu_1$ and a smooth positive eigenfunction $\phi$,
unique up to multiplication by a positive constant; moreover, an
eigenvalue admitting a positive eigenfunction is necessarily the
principal one. Its formal adjoint $P_{b,a}$ has the same principal
eigenvalue and also admits a positive eigenfunction. These facts follow
from \cite[Lemma~4.1 and Appendix~B]{andersson2008stability}; see also
\cite{galloway2018rigidity} for a concise exposition.

The mixed operator of Proposition~\ref{prop:mixed-twisted-factorization}
is the case
\begin{equation}\label{eq:Pm-def}
\mathcal P_m:=P_{\frac{m+2}{m},-\frac2m}
=\delta_{\frac{m+2}{m}}d_{-\frac2m},
\end{equation}
so that the quadratic form of
Section~\ref{sec:wedge-twisted-factorization} is
$\mathcal Q_m(f)=\langle f,\mathcal P_mf\rangle_{L^2}$. By
\eqref{eq:operator-general} with $a=(m+2)/m$ and $b=-2/m$,
\begin{equation}\label{eq:Pm}
\mathcal P_mf
=-\Delta f+\frac{m+4}{m}\langle X,\nabla f\rangle
+\left(\frac2m\operatorname{div}X-\frac{2(m+2)}{m^2}|X|^2\right)f,
\end{equation}
which reduces to \eqref{eq:m-minus-four-operator} at $m=-4$. Setting
\begin{equation}\label{eq:F-def}
F:=\lambda+\frac{|X|^2-\operatorname{div}X}{m},
\end{equation}
identity \eqref{eq:mixed-twisted-factorization} reads
\begin{equation}\label{eq:mixed}
\mathcal P_mF=-\frac{m+2}{m^2}|dX^\flat|^2 .
\end{equation}

In particular, $\mathcal P_mF\geq0$ on $M$ whenever $m<-2$.

\subsection{A principal-eigenvalue identity}
The following lemma is the key new ingredient. It shows that, although
$P_{a,b}$ is not self-adjoint when $a\neq b$, pairing the eigenvalue
equation with a suitable power of the positive eigenfunction still
produces a weighted energy identity with a definite sign.

\begin{lemma}\label{lem:power}
Let $a,b\in\mathbb R\setminus\{0\}$, set $q:=a/b$, and let
$P_{a,b}=\delta_ad_b$. If $P_{a,b}\phi=\mu_1\phi$ with $\phi>0$, then
\begin{equation}\label{eq:power}
\langle P_{a,b}\phi,\phi^q\rangle_{L^2}
=q\left\|\phi^{(q-1)/2}d_b\phi\right\|_{L^2}^2 .
\end{equation}
Consequently, if $X^\flat$ is not exact, then $\mu_1>0$ for $q>0$ and
$\mu_1<0$ for $q<0$. If $X^\flat$ is exact, then $\mu_1=0$.
\end{lemma}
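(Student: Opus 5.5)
The identity \eqref{eq:power} should follow purely from the adjointness $\delta_a=d_a^*$ recorded in Section~\ref{sec:twisted-energy}, together with the observation that $d_a$ applied to the power $\phi^q$ with $q=a/b$ is a multiple of $d_b\phi$. Since $\phi>0$ is smooth, $\phi^q$ is a smooth function on $M$ for every real $q$, so I can write
$$
\langle P_{a,b}\phi,\phi^q\rangle_{L^2}
=\langle \delta_a d_b\phi,\phi^q\rangle_{L^2}
=\langle d_b\phi, d_a(\phi^q)\rangle_{L^2}.
$$

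The key computation is then
$$
d_a(\phi^q)=q\phi^{q-1}d\phi+a\phi^qX^\flat
=q\phi^{q-1}\Bigl(d\phi+\tfrac{a}{q}\phi X^\flat\Bigr)
=q\phi^{q-1}\bigl(d\phi+b\phi X^\flat\bigr)
=q\phi^{q-1}d_b\phi .
$$
Here I use $a/q=b$, which is where $a,b\neq0$ enters. Substituting gives
$$
\langle P_{a,b}\phi,\phi^q\rangle_{L^2}=q\int_M\phi^{q-1}|d_b\phi|^2\,dM
=q\bigl\|\phi^{(q-1)/2}d_b\phi\bigr\|_{L^2}^2,
$$
which is \eqref{eq:power}.

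For the consequences, I would use the eigenvalue equation to rewrite the left-hand side of \eqref{eq:power} as $\mu_1\int_M\phi^{q+1}\,dM$. This integral is strictly positive, so $\mu_1$ has the sign of $q\,\|\phi^{(q-1)/2}d_b\phi\|^2_{L^2}$.

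It remains to decide when that weighted norm vanishes. Since $\phi^{q-1}>0$, it vanishes iff $d_b\phi=0$. Because $\phi>0$, this is equivalent to $X^\flat=-\tfrac1b\,d\log\phi$, which forces $X^\flat$ to be exact. Hence, if $X^\flat$ is not exact, the norm is positive, giving $\mu_1>0$ for $q>0$ and $\mu_1<0$ for $q<0$.

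Conversely, suppose $X^\flat=du$. I would set $\psi:=e^{-bu}>0$, so that $d_b\psi=-b\psi\,du+b\psi\,du=0$ and hence $P_{a,b}\psi=0$. Since an eigenvalue admitting a positive eigenfunction is the principal one (the fact quoted from \cite{andersson2008stability} above), this gives $\mu_1=0$.

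I do not see a genuine obstacle here. The only points needing care are the sign convention in $\delta_a=d_a^*$ on functions, which is already fixed in \eqref{eq:operator-general}, and invoking uniqueness of the principal eigenvalue in the exact case.
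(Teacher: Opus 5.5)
Your proposal is correct and follows the paper's proof essentially step for step: the same computation $d_a(\phi^q)=q\phi^{q-1}d_b\phi$, the same sign analysis via $\int_M\phi^{q+1}\,dM>0$ and $d_b\phi=0\iff X^\flat=-\frac1b\,d\log\phi$, and the same positive kernel element $e^{-bu}$ combined with the principal-eigenvalue fact in the exact case. The only cosmetic difference is that you move $\delta_a$ across as $d_a^*$ in a single step, whereas the paper passes through the formal adjoint $P_{b,a}=\delta_bd_a$; the two are the same computation.
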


\begin{proof}
Since $qb=a$, we have
$$
d_a(\phi^q)=q\phi^{q-1}d\phi+a\phi^qX^\flat
=q\phi^{q-1}\bigl(d\phi+b\phi X^\flat\bigr)
=q\phi^{q-1}d_b\phi .
$$

Pairing the eigenvalue equation with $\phi^q$ and using
\eqref{eq:adjoint-general} together with $\delta_b=d_b^*$, we obtain
\begin{align*}
\mu_1\langle\phi,\phi^q\rangle_{L^2}
&=\langle P_{a,b}\phi,\phi^q\rangle_{L^2}
=\langle\phi,P_{b,a}\phi^q\rangle_{L^2}
=\langle\phi,\delta_bd_a\phi^q\rangle_{L^2}\\
&=\langle d_b\phi,d_a\phi^q\rangle_{L^2}
=q\langle d_b\phi,\phi^{q-1}d_b\phi\rangle_{L^2}
=q\left\|\phi^{(q-1)/2}d_b\phi\right\|_{L^2}^2,
\end{align*}
which proves \eqref{eq:power}.

Since $\phi>0$, we have
$\langle\phi,\phi^q\rangle_{L^2}=\int_M\phi^{q+1}\,dM>0$. Moreover, the
squared norm in \eqref{eq:power} vanishes if and only if
$d_b\phi\equiv0$, that is, if and only if
$$
X^\flat=-\frac1b\,d(\log\phi).
$$

Thus, if $X^\flat$ is not exact, the right-hand side of \eqref{eq:power}
is strictly positive when $q>0$ and strictly negative when $q<0$, so
$\mu_1$ has the sign of $q$.

Finally, if $X^\flat=df$ for some $f\in C^\infty(M)$, the positive
function $\phi_0:=e^{-bf}$ satisfies
$d_b\phi_0=-be^{-bf}\,df+be^{-bf}\,df=0$, hence $P_{a,b}\phi_0=0$. Since
an eigenvalue admitting a positive eigenfunction is the principal
eigenvalue, we conclude that $\mu_1=0$.
\end{proof}

\begin{remark}\label{rem:power-m-minus-four}
At $m=-4$, the parameters of $\mathcal P_m$ satisfy $a=b=\tfrac12$ and
$q=1$, and \eqref{eq:power} becomes the quadratic energy identity
$\langle\mathcal P_{-4}\phi,\phi\rangle_{L^2}=\|d_{1/2}\phi\|_{L^2}^2$,
which is the mechanism behind \eqref{eq:trivial-kernel} in the proof of
Theorem~\ref{thm:rigidity-m-minus-four}. For every $m<-2$, the positive
power $q=-(m+2)/2$ provides the required sign, even though $\mathcal P_m$
is not self-adjoint.
\end{remark}

\subsection{The gradient case}
The gradient case is known (see \cite[Proposition~2.4]{colling2026quasi}),
where the result is attributed to Kim and Kim~\cite{KimKim2003}. We include
a short proof based on the first-order identity
\eqref{eq:second-twisted-step}, valid for every $m<0$.

\begin{lemma}\label{lem:gradient}
Let $(M^n,g,X,\lambda)$ be a closed $m$-quasi-Einstein
manifold with $m<0$ and constant $\lambda\leq0$. If $X^\flat$ is exact,
then $X\equiv0$.
\end{lemma}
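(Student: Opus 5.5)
The plan is to use the first-order identity \eqref{eq:second-twisted-step}. If $X^\flat=du$ is exact, then $dX^\flat=0$, and \eqref{eq:second-twisted-step} gives
$$
d_{-\frac2m}F=0,\qquad F=\lambda+\frac{|X|^2-\operatorname{div}X}{m}.
$$
In other words, $dF-\frac2m F\,du=0$, which says that $d\bigl(e^{-\frac2m u}F\bigr)=0$. Since $M$ is connected, we get
$$
F=c\,e^{\frac2m u}
$$
for some constant $c\in\mathbb R$. Expressed through the potential, $\operatorname{div}X=\Delta u$, so this reads
$$
\Delta u-|\nabla u|^2=m\lambda-mc\,e^{\frac2m u}.
$$

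Next I determine the sign of $c$ by integrating. Integrating $F=ce^{2u/m}$ over $M$ and using $\int_M\operatorname{div}X\,dM=0$ gives
$$
\lambda\operatorname{Vol}(M)+\frac1m\int_M|X|^2\,dM=c\int_Me^{2u/m}\,dM.
$$
With $m<0$ and $\lambda\le0$, the left-hand side is $\le0$, so $c\le0$. Moreover $c=0$ forces $\lambda=0$ and $X\equiv0$, and then we are done.

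It remains to rule out $c<0$, and I do this with a maximum principle. The function $w=e^{-u}$ satisfies $\Delta w=e^{-u}(|\nabla u|^2-\Delta u)$, so
$$
\Delta w=w\bigl(mc\,e^{\frac2m u}-m\lambda\bigr).
$$
When $c<0$ and $m<0$, we have $mc>0$, and $-m\lambda\ge0$. Hence $\Delta w>0$ everywhere, since $w>0$. But $w$ attains a maximum on the closed manifold $M$, and at that point $\Delta w\le0$, which is a contradiction. (Equivalently, integrating $\Delta w$ gives $0=\int_M w(mce^{2u/m}-m\lambda)\,dM>0$.) So $c=0$, and therefore $X\equiv0$.

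The main obstacle is the bookkeeping of signs in \eqref{eq:second-twisted-step}. One has to check that the twist $d_{-2/m}$ applied to the function $F$ really gives $dF-\frac2mF\,X^\flat$, which is how \eqref{eq:wedge-twist} defines it on $0$-forms, and that the integration-by-parts step gives the correct sign for $c$. Once $F=ce^{2u/m}$ is established, both the case $c=0$ and the maximum-principle argument for $c<0$ are straightforward.
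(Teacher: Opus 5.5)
Your first steps are correct and match the paper's opening: you derive $F=c\,e^{2u/m}$ from \eqref{eq:second-twisted-step}, rewrite it as $\Delta u-|\nabla u|^2=m\lambda-mc\,e^{2u/m}$, and integrate to get $c\le0$, with $c=0$ forcing $\lambda=0$ and $X\equiv0$. The argument breaks in the case $c<0$. There you assert $-m\lambda\ge0$, but for $m<0$ and $\lambda\le0$ one has $m\lambda\ge0$, that is, $-m\lambda\le0$. So in $\Delta w=w\bigl(mc\,e^{2u/m}-m\lambda\bigr)$ the first term is positive and the second is nonpositive, and $\Delta w$ has no definite sign. The integrated version fails for the same reason. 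Worse, the goal itself is false when $\lambda<0$. Take the trivial structure $X=0$ on a closed Einstein manifold with $\operatorname{Ric}=\lambda g$, $\lambda<0$ (e.g.\ a closed hyperbolic manifold). There $u$ is constant and $F=\lambda$, so $c=\lambda e^{-2u/m}<0$. Hence $c<0$ cannot be ruled out, and your argument is valid only when $\lambda=0$.

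The missing idea is to apply the maximum principle at both extrema and show $u$ is constant, instead of seeking a contradiction. At a minimum point of $u$ (a maximum of $w$), $\Delta w\le0$ gives $mc\,e^{2u_{\min}/m}\le m\lambda$. At a maximum point of $u$, $\Delta w\ge0$ gives $mc\,e^{2u_{\max}/m}\ge m\lambda$. Since $mc>0$ and $u\mapsto e^{2u/m}$ is strictly decreasing for $m<0$, you get
\[
mc\,e^{2u_{\min}/m}\le m\lambda\le mc\,e^{2u_{\max}/m}\le mc\,e^{2u_{\min}/m}.
\]
This forces $u_{\min}=u_{\max}$, hence $X\equiv0$. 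This is exactly the paper's route. With $h=e^{-f/m}$, the relation $F=ch^{-2}$ becomes $c=h\Delta h+(m-1)|\nabla h|^2+\lambda h^2$. The two extrema give $\lambda h_{\min}^2\le c\le\lambda h_{\max}^2$, which forces $h$ to be constant when $\lambda<0$. When $\lambda=0$ the paper gets $c=0$ and uses that $h^m$ is harmonic; your integration argument handles that case equally well.
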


\begin{proof}
Write $X^\flat=df$ and let $F$ be as in \eqref{eq:F-def}. Since
$dX^\flat=0$, identity \eqref{eq:second-twisted-step} gives
$$
0=d_{-\frac2m}F=dF-\frac2mF\,df=e^{2f/m}\,d\bigl(e^{-2f/m}F\bigr),
$$
so $F=ce^{2f/m}$ for some constant $c\in\mathbb R$. Setting
$h:=e^{-f/m}>0$, we have $F=ch^{-2}$, $\nabla f=-m\,\nabla h/h$, and
therefore
$$
|X|^2=\frac{m^2}{h^2}|\nabla h|^2,
\qquad
\operatorname{div}X=\Delta f=-\frac mh\Delta h+\frac{m}{h^2}|\nabla h|^2 .
$$

Substituting these expressions into \eqref{eq:F-def} and multiplying by
$h^2$, we obtain
\begin{equation}\label{eq:gradient}
c=h\Delta h+(m-1)|\nabla h|^2+\lambda h^2 .
\end{equation}

At a minimum point and at a maximum point of $h$, the gradient term
vanishes, while $h\Delta h$ is nonnegative and nonpositive,
respectively, because $h>0$. Hence \eqref{eq:gradient} yields
$$
\lambda h_{\min}^2\leq c\leq\lambda h_{\max}^2 .
$$

If $\lambda<0$, then $\lambda h_{\max}^2\leq\lambda h_{\min}^2$, and
these inequalities force $h_{\min}=h_{\max}$. If $\lambda=0$, they give
$c=0$, and \eqref{eq:gradient} becomes
$$
\Delta(h^m)=mh^{m-2}\bigl(h\Delta h+(m-1)|\nabla h|^2\bigr)=0,
$$
so $h^m$ is constant because $M$ is closed. In either case $h$, and
hence $f$, is constant and $X\equiv0$ as desired.
\end{proof}

\subsection{The non-gradient case}

We are now in a position to present the proof of the main Theorem.

\begin{proof}[Proof of Theorem~\ref{thm:main}]
Assume first that $m<-2$. By Lemma~\ref{lem:gradient}, it suffices to
exclude the case in which $X^\flat$ is not exact. The parameters of $\mathcal P_m$
in \eqref{eq:Pm-def}, namely $a=\frac{m+2}{m}$ and $b=-\frac2m$, are both
nonzero, and
$ 
q=\frac ab=-\frac{m+2}{2}>0 .
$
Hence Lemma~\ref{lem:power} shows that the principal eigenvalue $\mu_1$
of $\mathcal P_m$ is strictly positive. Let $\phi>0$ be a principal
eigenfunction, $\mathcal P_m\phi=\mu_1\phi$, and write $F=\phi f$ with
$f:=F/\phi\in C^\infty(M)$, where $F$ is given by \eqref{eq:F-def}.

Applying \eqref{eq:Pm} to $\phi f$ and using the product rule, we obtain
\begin{align*}
\mathcal P_m(\phi f)
&=-\Delta(\phi f)+\frac{m+4}{m}\langle X,\nabla(\phi f)\rangle
+\left(\frac2m\operatorname{div}X-\frac{2(m+2)}{m^2}|X|^2\right)\phi f\\
&=-\phi\Delta f-2\langle\nabla\phi,\nabla f\rangle
+\frac{m+4}{m}\phi\langle X,\nabla f\rangle\\
&\quad+f\left[-\Delta\phi+\frac{m+4}{m}\langle X,\nabla\phi\rangle
+\left(\frac2m\operatorname{div}X-\frac{2(m+2)}{m^2}|X|^2\right)\phi\right]\\
&=-\phi\Delta f-2\langle\nabla\phi,\nabla f\rangle
+\frac{m+4}{m}\phi\langle X,\nabla f\rangle+\mu_1\phi f,
\end{align*}
where we used $\mathcal P_m\phi=\mu_1\phi$ in the last step. Factoring
out $\phi>0$ yields the conjugation identity
\begin{equation}\label{eq:conjugation}
\mathcal P_m(\phi f)
=\phi\left[-\Delta f
+\left\langle\frac{m+4}{m}X-2\nabla\log\phi,\nabla f\right\rangle
+\mu_1f\right],
\end{equation}
which generalizes \eqref{eq:ground-state-transform}.

On the other hand, \eqref{eq:mixed} gives $\mathcal P_mF\geq0$ on $M$,
because $m<-2$. Since $M$ is closed, $f$ attains its minimum. If this
minimum were negative, then at a minimum point we would have
$\nabla f=0$, $\Delta f\geq0$, and $\mu_1f<0$, so that
\eqref{eq:conjugation} would give $\mathcal P_mF<0$ there, a
contradiction. Therefore $f\geq0$, and hence $F\geq0$ on $M$. Integrating
\eqref{eq:F-def}, we get
$$
0\leq\int_MF\,dM
=\lambda\operatorname{Vol}(M)+\frac1m\int_M|X|^2\,dM\leq0,
$$
since $\lambda\leq0$ and $m<0$. Both terms on the right-hand side must
therefore vanish, so $X\equiv0$. But then $X^\flat=0$ is exact,
contradicting our assumption. This proves the theorem for $m<-2$.

It remains to consider $m=-2$, for which $a=0$ and
Lemma~\ref{lem:power} does not apply. Since $\lambda$ is constant, the
condition $\int_M\langle X,\nabla\lambda\rangle\,dM\leq0$ holds
trivially, and Corollary~\ref{cor:m-minus-two} yields $X\equiv0$. We
note that the integral identity used there is recovered by taking the
$L^2$-pairing of \eqref{eq:intrinsic-codifferential} at $m=-2$ with
$X^\flat$, which gives \eqref{eq:witten-hodge} at $m=-2$ with constant
$\lambda$.
\end{proof}

\section{Appendix}

In this appendix, we present a closed
generalized $m$-quasi-Einstein manifold carrying a conformal
non-Killing potential vector field and satisfying
$ 
\int_M \langle X,\nabla\lambda\rangle\,dM<0.
$ 

\begin{theo}\label{thm:warped}
Let $q=n-1\geq2$ and $(F^q,g)$ be a closed Einstein manifold satisfying
$ 
\operatorname{Ric}=-(q-1)g,
$
and let $m>0$. For every $E>\min V$, where $V$ is the potential in
\eqref{eq:C6}, there exist $L>0$ and a smooth, positive, nonconstant
$L$-periodic function $\phi$ such that, on $M=\mathbb{S}^1_L\times F$,
$$
\bar g=dt^2+\phi(t)^2g,
\qquad
X=\phi(t)\partial_t,
$$
satisfy \eqref{GQE} for a smooth nonconstant function $\lambda$.
Moreover, $X$ is conformal and non-Killing, and
\begin{equation}\label{eq:C7}
\int_M\langle X,\nabla\lambda\rangle\,dM_{\bar g}
=-n\operatorname{Vol}(F,g)\int_0^L\phi^q(\phi')^2\,dt<0.
\end{equation}
\end{theo}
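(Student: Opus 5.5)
The plan is to reduce \eqref{GQE} on the warped product to a single autonomous ODE for the warping function, solve it through a conservative potential, and then read off the remaining claims from Theorem~\ref{1}.

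\textbf{Step 1: conformality of $X$.} For $\bar g=dt^2+\phi(t)^2g$ and $X=\phi\partial_t$, the standard warped-product connection formulas give
$$
\bar\nabla_{\partial_t}(\phi\partial_t)=\phi'\partial_t,
\qquad
\bar\nabla_V(\phi\partial_t)=\phi\cdot\frac{\phi'}{\phi}V=\phi' V
$$
for $V$ tangent to $F$. Hence $\bar\nabla X=\phi'\,\mathrm{Id}$. It follows that $X$ is closed and conformal, with
$$
\mathcal L_X\bar g=2\phi'\bar g,
\qquad
\operatorname{div}X=n\phi'.
$$
In particular, $X$ is Killing if and only if $\phi$ is constant.

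\textbf{Step 2: reduction of \eqref{GQE}.} I will use the warped-product Ricci formulas
$$
\operatorname{Ric}(\partial_t,\partial_t)=-q\frac{\phi''}{\phi},
\qquad
\operatorname{Ric}|_{F}=\frac{-(q-1)(1+\phi'^2)-\phi\phi''}{\phi^2}\,\bar g,
$$
where the second uses $\operatorname{Ric}_g=-(q-1)g$. The mixed components vanish. Since $X^\flat=\phi\,dt$, the term $X^\flat\otimes X^\flat$ only contributes $\phi^2$ in the $tt$-slot. Equation \eqref{GQE} therefore splits into two scalar equations:
$$
-q\frac{\phi''}{\phi}+\phi'-\frac{\phi^2}{m}=\lambda,
\qquad
\frac{-(q-1)(1+\phi'^2)-\phi\phi''}{\phi^2}+\phi'=\lambda.
$$
I take the second equation as the definition of $\lambda(t)$. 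Subtracting the two, the first equation becomes equivalent to
$$
\phi''=\frac{1+\phi'^2}{\phi}-\frac{\phi^3}{m(q-1)}.
$$

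\textbf{Step 3: solving the ODE (the main step).} Substitute $\phi=e^u$. The ODE becomes the conservative equation
$$
u''=e^{-2u}-\frac{e^{2u}}{m(q-1)}=-V'(u),
\qquad
V(u)=\frac12e^{-2u}+\frac{e^{2u}}{2m(q-1)}.
$$
I expect this $V$ to be the potential of \eqref{eq:C6}. Because $m>0$ and $q\ge2$, $V$ is strictly convex, tends to $+\infty$ as $u\to\pm\infty$, and has a unique minimum. Hence for every energy $E>\min V$ the level set $\tfrac12u'^2+V(u)=E$ is a closed regular orbit, and the solution $u$ is smooth, nonconstant and periodic with some period $L>0$. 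Setting $\phi=e^u$ gives a smooth, positive, nonconstant $L$-periodic function, so everything descends to $\mathbb S^1_L\times F$. The care needed here is checking that the level curve avoids the equilibrium and is compact, so that the period is finite.

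\textbf{Step 4: the integral and the remaining claims.} Since $\phi$ is nonconstant, $\phi'\not\equiv0$, so $X$ is conformal and non-Killing by Step 1. Theorem~\ref{1} applies to this closed generalized $m$-quasi-Einstein manifold and gives
$$
\int_M\langle X,\nabla\lambda\rangle\,dM_{\bar g}
=-\frac1n\int_M(\operatorname{div}X)^2\,dM_{\bar g}
=-n\int_M\phi'^2\,dM_{\bar g}.
$$
With $dM_{\bar g}=\phi^q\,dt\,dM_g$, this is exactly \eqref{eq:C7}, and it is strictly negative because $\phi'\not\equiv0$. Finally, $\lambda$ is nonconstant: if it were constant, the left side would vanish, contradicting strict negativity.
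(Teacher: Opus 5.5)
Your proposal is correct and follows the paper's proof essentially step by step: the same warped-product Ricci formulas and reduction to \eqref{eq:C5}; the same substitution $\psi=\log\phi$ with the strictly convex potential $V$, whose energy levels $E>\min V$ give nonconstant periodic solutions; and the same appeal to Theorem~\ref{1}, with $\operatorname{div}X=n\phi'$ and $dM_{\bar g}=\phi^q\,dt\,dM_g$, to obtain \eqref{eq:C7}. Your closing remark that the strict negativity in \eqref{eq:C7} forces $\lambda$ to be nonconstant is a useful addition, since the paper's proof asserts this without spelling it out.
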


\begin{proof}
For vector fields $U,V$ tangent to $F$, the warped-product Ricci
formulas are (cf. \cite{o1983semi})
$$
\operatorname{Ric}_{\bar g}(\partial_t,\partial_t)
=-q\frac{\phi''}{\phi},
\qquad
\operatorname{Ric}_{\bar g}(\partial_t,U)=0,
$$
$$
\operatorname{Ric}_{\bar g}(U,V)
=
\left[-(q-1)-\phi\phi''-(q-1)(\phi')^2\right]g(U,V).
$$

Furthermore,
$\nabla_{\partial_t}X=\phi'\partial_t$ and $\nabla_UX=\phi'U$, so
$\frac12\mathcal L_X\bar g=\phi'\bar g,$ while
$X^\flat\otimes X^\flat=\phi^2dt^2$.  The radial and fiber components
of \eqref{GQE} are therefore
$$
\lambda=-q\frac{\phi''}{\phi}+\phi'-\frac{\phi^2}{m},
\qquad
\lambda=\frac{-(q-1)-\phi\phi''-(q-1)(\phi')^2}{\phi^2}+\phi'.
$$

They agree exactly when
\begin{equation}\label{eq:C5}
\phi\phi''=(\phi')^2+1-\frac{\phi^4}{m(q-1)}.
\end{equation}

The mixed components vanish identically.
Put $\psi=\log\phi$.  Equation \eqref{eq:C5} is
\begin{equation}\label{eq:C6}
\psi''=e^{-2\psi}-\frac{e^{2\psi}}{m(q-1)}=-V'(\psi),
\qquad
V(\psi)=\frac12e^{-2\psi}+
\frac{e^{2\psi}}{2m(q-1)}.
\end{equation}

Here $V''=2e^{-2\psi}+2e^{2\psi}/(m(q-1))>0$ and
$V(\psi)\to+\infty$ at both ends. It therefore has a unique critical point,
$
\psi_0=\frac14\log m(q-1),
$
which is its global minimum, with 
$V(\psi_0)=\frac1{\sqrt {m(q-1)}}.$ 

The total energy of the system is defined by
\begin{equation}\label{Energy}
\mathcal{E}(\psi,\psi')
=
\frac{1}{2}(\psi')^2+V(\psi),
\end{equation}
where the two terms on the right-hand side represent the kinetic and
potential energies, respectively. By the law of conservation of energy
\cite[p.~16]{Arnold1989}, $\mathcal{E}$ remains constant along every
solution. Indeed,
$$
\frac{d}{dt}\mathcal{E}(\psi,\psi')
=
\psi'\bigl(\psi''+V'(\psi)\bigr)
=
0.
$$ 

Moreover, for each energy value satisfying
$$
E>\min V=\frac{1}{\sqrt{m(q-1)}},
$$
the equation $V(\psi)=E$ has exactly two solutions
$ 
\psi_-<\psi_0<\psi_+.
$ 
These are the two simple turning points of the corresponding energy
level, since
$
V'(\psi_\pm)\neq0.
$   Consequently, the solution with energy $E$ oscillates between $\psi_-$
  and $\psi_+$. Solving \eqref{Energy} for $\psi'$ and separating
  variables, we obtain its period:
  $$
  L(E)
  =
  2\int_{\psi_-}^{\psi_+}
  \frac{ds}{\sqrt{2\bigl(E-V(s)\bigr)}}.
  $$
  
This integral is finite because the turning points are simple.
The local existence and uniqueness theorem for the smooth
autonomous system therefore ensures that the trajectory passes smoothly
from one monotone arc of $\Gamma_E = \mathcal{E}^{-1}(E)$ to the other at each turning point and returns to
its initial phase point after the finite time $L(E)$. Consequently,
the corresponding solution is nonconstant and $L(E)$-periodic. 
See, for instance, \cite[Section~6.7, pp.~132--135]{Teschl}
for the phase-plane and energy-level description of one-dimensional
Newton equations.

Consequently $\psi$, and hence $\phi=e^\psi$, is smooth, positive,
nonconstant, and periodic.  The metric, $X$, and the displayed
$\lambda$ therefore descend to $\mathbb{S}^1_L\times F$.

Finally, Theorem~\ref{1} applies because $X$ is conformal. Since
$\operatorname{div}X=n\phi'$ and $dM=\phi^qdt\,dV$, identity
\eqref{eq:C1} gives
$$
\int_M\langle X,\nabla\lambda\rangle\,dM
=-\frac1n\int_M(\operatorname{div}X)^2\,dM
=-n\operatorname{Vol}(F,g)\int_0^L\phi^q(\phi')^2\,dt.
$$

The last integral is positive for a nonconstant solution, proving
\eqref{eq:C7}.
\end{proof}

\textbf{Declarations}\\

\textbf{AI Disclosure.}
The proof of Theorem~\ref{thm:main} was suggested by
Astra~6.0, an OpenAI model accessed through ChatGPT.
In particular, the suggestion consisted of testing the principal
eigenfunction of $\delta_ad_b$ against the power $\phi^{a/b}$
(Lemma~\ref{lem:power}) and combining the resulting identity with the
differential identities of Section~\ref{sec:wedge-twisted-factorization}
to obtain triviality for $m\leq-2$. Generative artificial intelligence
tools, specifically ChatGPT and Codex by OpenAI, were also used during
the preparation of this manuscript as auxiliary aids for language
editing, improving exposition and organization. The authors take full
responsibility for the mathematical content, references, and conclusions
of the manuscript.\\

\textbf{Data Availability.} Data sharing is not applicable to this article as no datasets were generated during the current study.\\

\textbf{Conflict of interest.} The authors declare that there is no  conflict of interest.\\

\textbf{Acknowledgments.} The second author was supported by CNPq/Brazil Grant 409513/2023-7.
\bibliographystyle{plain}
\bibliography{references}
\end{document}